\newenvironment{proofclaim}{\paragraph{\emph{Proof of the Claim}.}}{\hfill\\} 
\newtheorem{theorem}{Theorem}[section]
\newtheorem{lemma}[theorem]{Lemma}
\newtheorem{proposition}[theorem]{Proposition}
\newtheorem{corollary}[theorem]{Corollary}
\newtheorem{claim}[theorem]{Claim}
\theoremstyle{definition}
\newtheorem{definition}[theorem]{Definition}
\newtheorem{example}[theorem]{Example}
\newtheorem{remark}[theorem]{Remark}
\newtheorem{convention}[theorem]{Convention}
\newcommand{\thd}{{\twoheaddownarrow}}
\newcommand{\bluebf}[1]{\color{blue}\textbf{#1}\color{black}}
\newcommand{\Nach}{{\sf Nach}}
\newcommand{\KHaus}{{\sf KHaus}}
\newcommand{\spbal}{\boldsymbol{\mathit{spba}\ell}}
\newcommand{\uspbal}{\boldsymbol{\mathit{uspba}\ell}}
\newcommand{\bal}{\boldsymbol{\mathit{ba}\ell}}
\newcommand{\ubal}{\boldsymbol{\mathit{uba}\ell}}
\newcommand{\pbal}{\boldsymbol{\mathit{pba}\ell}}
\newcommand{\nbal}{\boldsymbol{\mathit{nba}\ell}}
\newcommand{\unbal}{\boldsymbol{\mathit{unba}\ell}}
\newcommand{\sbal}{\boldsymbol{sbal}}
\newcommand{\usbal}{\boldsymbol{usbal}}
\newcommand{\C}{\mathscr{C}}
\newcommand{\B}{\mathscr{B}}
\newcommand{\R}{\mathscr{R}}
\renewcommand{\P}{\mathscr{P}}
\newcommand{\Q}{\mathscr{Q}}
\newcommand{\X}{\mathscr{X}}
\newcommand{\Y}{\mathscr{Y}}
\setlist[enumerate,1]{label={\upshape(\arabic*)}}
\edef\plabelformat{\string#2\string#1\string#3}
\edef\plabelrangeformat{\string#3\string#1\string#4--\string#5\string#2\string#6}
\newcommand{\plabel}[1]{\label{#1}
\immediate\write\@auxout{\noexpand\crefformat{#1}{\noexpand\cref{#1}\plabelformat}
\noexpand\crefmultiformat{#1}{\noexpand\cref{#1}\plabelformat}{,\plabelformat}{,\plabelformat}{,\plabelformat}
\noexpand\crefrangeformat{#1}{\noexpand\cref{#1}\plabelrangeformat}}}
\begin{document}

\title{Generalizing Gelfand duality to Nachbin spaces}

\author{G.~Bezhanishvili}
\address{New Mexico State University}
\email{guram@nmsu.edu}

\author{P.~J.~Morandi}
\address{New Mexico State University}
\email{pmorandi@nmsu.edu}

\subjclass[2020]{06F25; 13J25; 54C30; 54E05; 54F05} 
\keywords{Compact ordered space, order-preserving function, continuous real-valued function, $\ell$-algebra, uniform completion, Gelfand duality, proximity}  

\begin{abstract}
We introduce the notion of a Nachbin proximity on a bounded archimedean $\ell$-algebra ($\bal$-algebra), and show that Gelfand duality lifts to yield a dual equivalence between the category of uniformly complete $\bal$-algebras equipped with a closed Nachbin proximity and that of Nachbin spaces (compact ordered spaces). The key ingredients of the proof include appropriate generalizations of the Stone-Weierstrass theorem and Dieudonn\'{e}'s lemma. We also develop an alternate approach by means of bounded archimedean $\ell$-semialgebras ($\sbal$-algebras), from which we derive De Rudder--Hansoul duality. 
\end{abstract}

\maketitle
\tableofcontents

\section{Introduction}

Compact Hausdorff spaces are one of the central objects of study in topology. The category $\KHaus$ of compact Hausdorff spaces and continuous maps affords numerous dualities, among which Gelfand duality plays a prominent role. It can be developed either by working with the ring of complex-valued functions on compact Hausdorff spaces (commutative $C^*$-algebras) or with the ring of real-valued functions (bounded archimedean $\ell$-algebras). We will work with the latter formalism and refer to \cite{BMO13a} for details about how the two approaches are related to each other. 

Nachbin was one of the pioneers in incorporating order in the study of topology (see \cite{Nac65}). In this more general setting, the role of compact Hausdorff spaces is played by compact spaces equipped with a closed order, and it is now common to refer to such as Nachbin spaces. 
It is well known that the category $\Nach$ of Nachbin spaces and continuous order-preserving maps is equivalent 
to the category of stably compact spaces \cite[Ch.~VI.6]{GHKLMS03}, and hence to that of compact regular bitopological spaces. 
Thus, $\Nach$ affords numerous dualities, which are useful in pointfree topology (see, e.g., \cite[Thm.~VI.7.4]{GHKLMS03} as well as \cite{BBH83,Smy92,JS95,BH14}). 

In \cite{DH18}, a version of Gelfand duality was developed for $\Nach$, where the authors worked with  continuous order-preserving functions on Nachbin spaces that are positive.  Axiomatizing the resulting lattice-ordered semirings requires an additional axiom corresponding to the fact that for each such $f$ and a positive real number $r$ with $r \le f$, there is another such $g$ with $f = g + r$. 

We take another approach. Firstly, we find it more natural to work with all continuous order-preserving functions on $X\in\Nach$, not only positive ones. As we will see, an axiomatization of such algebras is more straightforward. In particular, it does not require the additional axiom above.
In the last section, we will see that the resulting category is equivalent to that considered in \cite{DH18}. But we also go one step further and recognize the algebra $C_\le(X)$ of all continuous order-preserving functions on $X$ as the fixpoints of the proximity $\prec_X$ on $C(X)$ given by 
\[
f \prec_X g \iff \textrm{ there is } c \in C_\le(X) \textrm{ with }f \le c \le g.
\]
We provide an axiomatization of the resulting pairs $(A,\prec)$ by introducing the notion of a Nachbin proximity on a $\bal$-algebra. We then lift Gelfand duality by establishing that $\Nach$ is dually equivalent to the category of such pairs, where $A$ is uniformly complete and $\prec$ is a Nachbin proximity on $A$ that is closed in $A \times A$. This is done by obtaining appropriate generalizations of the Stone-Weierstrass theorem and Dieudonn\'{e}'s lemma to the setting of Nachbin spaces. 

As a result, we obtain two generalizations of Gelfand duality to $\Nach$, one using the formalism of Nachbin proximities on $\bal$-algebras and another using 
that of bounded archimedean $\ell$-semialgebras. The latter is closely related to the approach of \cite{DH18}, which we obtain as a consequence of our approach.

The paper is organized as follows. In \cref{sec: preliminaries}, we briefly recall Gelfand duality for compact Hausdorff spaces using the formalism of $\bal$-algebras (bounded archimedean $\ell$-algebras).
In \cref{sec: sbal}, we generalize the notion of a $\bal$-algebra to that of an $\sbal$-algebra and show that $\bal$ is a reflective subcategory of $\sbal$. 
In \cref{sec: pbal}, we introduce the notion of proximity on a $\bal$-algebra, giving rise to the category $\pbal$ of proximity $\bal$-algebras. We construct an adjoint pair of functors between $\pbal$ and $\sbal$, yielding an equivalence between $\sbal$ and a full subcategory $\spbal$ of $\pbal$ consisting of skeletal proximity $\bal$-algebras. 
In \cref{sec: Nachbin proximities}, we further refine the notion of proximity to that of Nachbin proximity and show that there is a contravariant adjunction between the category $\nbal$ of $\bal$-algebras equipped with a Nachbin proximity and the category $\Nach$ of Nachbin spaces.   
In \cref{sec: duality}, we introduce the full subcategory $\unbal$ of $\nbal$ consisting of uniformly complete $\bal$-algebras equipped with a closed Nachbin proximity, as well as the full subcategory $\usbal$ of $\sbal$ consisting of uniformly complete $\sbal$-algebras. Our main result establishes that $\unbal$ and $\usbal$ are equivalent to each other and dually equivalent to $\Nach$. It is obtained by developing appropriate generalizations of the Stone-Weierstrass theorem and Dieudonn\'{e}'s lemma for Nachbin spaces. The section ends with a diagram detailing the equivalences and dual equivalences obtained in the paper.  
Finally, in \cref{sec: comparison with DH18}, we compare our approach to that of \cite{DH18}.

\section{Preliminaries} \label{sec: preliminaries}

{\em An ordered space} is a topological space equipped with a partial order. A {\em Nachbin space} is a compact ordered space $X$ such that the order $\le$ is closed in $X \times X$. The latter condition is equivalent to the following separation axiom (see \cite[Prop.~1]{Nac65}): 
\begin{align*} \label{eqn: COR}
\mbox{If $x \not\le y$, then there exist disjoint neighborhoods $U$ of $x$} \\
\mbox{and $V$ of $y$ such that $U$ is an upset and $V$ is a downset.} 
\end{align*}

This separation axiom 
is known as \emph{order-Hausdorffness} (see \cite{McC68}).

\begin{definition}
    Let $\Nach$ be the category of Nachbin spaces and continuous order-preserving maps.    
\end{definition}

The category $\Nach$ serves as an order-topological analog of $\KHaus$. As we pointed out in the introduction, our aim is to lift Gelfand duality to $\Nach$.  
We start by recalling some basic definitions (see, e.g., \cite[Ch.~XIII--XVII]{Bir79}).

All rings are assumed to be commutative with $1$ and ring homomorphisms are assumed to be unital (preserve $1$).  
An {\em $\ell$-ring} (lattice-ordered ring) is a ring $A$ with a lattice order $\le$ such that $a\le b$ implies $a+c \le b+c$ and $0 \leq a, b$ implies $0 \le ab$. 
An $\ell$-ring $A$ is \emph{bounded} if for each $a \in A$ there is $n \in \mathbb{N}$ with $-n\cdot 1 \le a \le n\cdot 1$ 
($1$ is a strong order unit), and it is \emph{archimedean} if whenever $n\cdot a \le b$ for each $n \in \mathbb{N}$, then $a \le 0$.
An $\ell$-ring $A$ is an \emph{$\ell$-algebra} if it is an $\mathbb R$-algebra and for each $0 \le a\in A$ and $0\le r\in\mathbb R$ we
have $0 \le r \cdot a$. Let $\bal$ be the category of bounded archimedean $\ell$-algebras and unital $\ell$-algebra homomorphisms.

For each $A \in \bal$, we view $\mathbb{R}$ as an $\ell$-subalgebra of $A$ by identifying
$r\in\mathbb R$ with $r\cdot 1\in A$. For $a \in A$, define the {\em positive} and {\em negative} parts of $a$ by 
\[
a^+ = a \vee 0 \mbox{ and } a^- = (-a) \vee 0.
\]

\begin{remark}
Our definition of $a^-$ follows that of \cite[p.~75]{HJ61}. A more standard definition of $a^-$ is as $a \wedge 0$ (see \cite[p.~293]{Bir79}). The two are related by $a^- = -(a\wedge 0)$. 
\end{remark}

We have $a^+, a^- \ge 0$ and $a = a^+ - a^-$ (see, e.g., \cite[p.~75]{HJ61}).
Next, define the \emph{absolute value} and {\em norm} of $a$ by
\begin{equation} \label{eqn: norm}
|a|=a\vee(-a) \quad \mbox{and} \quad  
||a||=\inf\{r\in\mathbb{R} : |a|\le r\}.
\end{equation}
Then $A$ is \emph{uniformly complete} if the norm is complete. Let $\ubal$ be the full subcategory of $\bal$ consisting of uniformly complete $\ell$-algebras. The following result is known as Gelfand duality as it originates in \cite{Gel39} (see also \cite{Sto40,GN43}). 

\begin{theorem} [Gelfand duality]
There is a contravariant adjunction between $\bal$ and $\KHaus$ which restricts to a dual equivalence between $\KHaus$ and $\ubal$.
\[
\begin{tikzcd}
\ubal \arrow[rr, hookrightarrow] && \bal \arrow[dl, "\X"]  \arrow[ll, bend right = 20, "\C\X"'] \\
&  \KHaus \arrow[ul,  "\C"] &
\end{tikzcd}
\]
\end{theorem}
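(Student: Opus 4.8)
The plan is to construct the two functors, exhibit the unit and counit of the contravariant adjunction, and then pin down exactly when each is an isomorphism, the restriction to $\ubal$ following from this analysis. The functor $\C$ sends $X \in \KHaus$ to the ring $C(X)$ of continuous real-valued functions with pointwise operations and order, and a continuous map $\varphi \colon X \to Y$ to the homomorphism $f \mapsto f \circ \varphi$. One checks directly that $C(X)$ is a bounded archimedean $\ell$-algebra with $\|f\| = \sup_{x} |f(x)|$, and that uniform completeness is inherited from that of $\mathbb{R}$ together with the fact that a uniform limit of continuous functions is continuous, so that $\C$ in fact lands in $\ubal$. For the functor $\X$, I would take $\X A$ to be the set of unital $\ell$-algebra homomorphisms $A \to \mathbb{R}$ (equivalently, the maximal $\ell$-ideals of $A$), topologized as a subspace of $\mathbb{R}^A$ in the product topology, and send a homomorphism $h \colon A \to B$ to precomposition. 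Boundedness confines each character's value on $a$ to the compact interval $[-\|a\|,\|a\|]$, so $\X A$ sits inside $\prod_{a} [-\|a\|,\|a\|]$, which is compact by Tychonoff; since $\X A$ is cut out by the closed conditions defining a homomorphism, it is a closed, hence compact, Hausdorff subspace.

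Next I would set up the natural transformations. The unit is the \emph{Gelfand transform} $\eta_A \colon A \to \C\X A$ sending $a$ to the evaluation map $\hat a$ with $\hat a(\mathfrak{m}) = \mathfrak{m}(a)$; this is a unital $\ell$-algebra homomorphism essentially by the definition of the topology and operations on $\X A$. The counit is $\epsilon_X \colon X \to \X \C X$ sending $x$ to the point-evaluation character $\mathrm{ev}_x \colon f \mapsto f(x)$, which is continuous because each coordinate $x \mapsto \mathrm{ev}_x(f) = f(x)$ is. Naturality of both and the two triangle identities are routine diagram chases that I would verify but not belabor, yielding the contravariant adjunction.

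The substance is in showing that $\epsilon_X$ and $\eta_A$ are isomorphisms under the stated hypotheses. For $\epsilon_X$: injectivity holds because $C(X)$ separates points of a compact Hausdorff (hence normal) space via Urysohn's lemma; surjectivity is the claim that every character of $C(X)$ is a point-evaluation, which I would prove by showing its kernel is a maximal $\ell$-ideal whose common zero set is nonempty --- this is where compactness enters, through a finite-intersection-property argument ruling out the possibility that the functions in the kernel have no common zero. A continuous bijection between compact Hausdorff spaces is automatically a homeomorphism, so $\epsilon_X$ is an isomorphism for every $X$. For $\eta_A$: injectivity (indeed, that $\eta_A$ is an isometric embedding) follows from the archimedean hypothesis, which guarantees that characters separate the points of $A$; and the image $\hat A \subseteq C(\X A)$ is a point-separating subalgebra containing the constants, hence uniformly dense by the Stone--Weierstrass theorem. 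Thus $\eta_A$ is an isomorphism exactly when $\hat A$ is closed, that is, when $A$ is uniformly complete, which is precisely the restriction to $\ubal$.

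The main obstacle I anticipate is the analytic core on which everything rests: first, the representation fact (a H\"older/Stone-type theorem) that quotienting a $\bal$-algebra by a maximal $\ell$-ideal returns $\mathbb{R}$, which is what legitimizes identifying maximal $\ell$-ideals with real-valued characters and underlies both the definition of $\X A$ and the surjectivity of $\epsilon_X$; and second, the Stone--Weierstrass density argument, which is the crucial input turning uniform completeness into surjectivity of the Gelfand transform. Everything else is a matter of assembling these ingredients into functors and checking the categorical bookkeeping.
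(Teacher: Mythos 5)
Your proposal is correct, and its skeleton is the one the paper itself follows (note the paper does not actually prove the theorem: it cites \cite{Gel39,Sto40,GN43} and only describes the functors and units, deferring verifications to \cite{HJ61,BMO13a}). The one genuine difference is in the construction of the spectrum: you realize $\X A$ as the set of $\mathbb{R}$-valued characters topologized as a closed subspace of $\prod_a[-\|a\|,\|a\|]$, getting compactness from Tychonoff and continuity of each Gelfand transform $\hat a$ for free (it is a restricted projection), whereas the paper takes $\X A$ to be the maximal $\ell$-ideal space with the hull-kernel topology whose closed sets are the $Z_\ell(I)$. The two are homeomorphic, but the identification itself costs exactly the H\"older-type fact $A/M\cong\mathbb{R}$ that you flag; the paper's presentation avoids Tychonoff and keeps the topology in the form ($\ell$-ideals, zero sets) that the rest of the paper uses when it defines the order on $X_A$ and the sets $U,V$ in Theorem 5.8, while yours is more self-contained analytically. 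Also, the paper packages the adjunction as a natural bijection $\theta:\hom_{\bal}(A,\C X)\to\hom_{\KHaus}(X,\X A)$ rather than via unit/counit and triangle identities; this is cosmetic. One caution: injectivity of the Gelfand transform is not a formal consequence of ``archimedean $\Rightarrow$ characters separate points'' --- semisimplicity, i.e.\ $\bigcap X_A=0$ for bounded archimedean $\ell$-algebras, is a second nontrivial theorem (Henriksen--Johnson), distinct from $A/M\cong\mathbb{R}$, so your list of analytic cores should contain both facts rather than fold the former into the latter.
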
 

We briefly describe the functors establishing Gelfand duality. Our account follows that of \cite{HJ61,BMO13a}. 
The functor $\C:\KHaus \to \bal$ associates with each compact Hausdorff space $X$ the ring $C(X)$ of (necessarily bounded) continuous real-valued functions on $X$, and with each continuous map $\varphi:X\to Y$ the unital $\ell$-algebra homomorphism $\C \varphi : C(Y)\to C(X)$ given by $\C \varphi (f)=f\circ\varphi$. 

To define the functor $\X:\bal\to\KHaus$, we recall that an ideal $I$ of $A\in\bal$ is an \emph{$\ell$-ideal} if $|a|\le|b|$ and $b\in I$ imply $a\in I$. 
Let $X_A$ be the space of maximal $\ell$-ideals of $A$, whose closed sets are exactly sets of the form
\[
Z_\ell(I) = \{x\in X_A: I\subseteq x\},
\]
where $I$ is an $\ell$-ideal of $A$.
The functor $\X : \bal \to \KHaus$ 
associates with each $A \in \bal$ the space $X_A$ and with each morphism $\alpha : A \to B$ in $\bal$ the continuous map $\X\alpha = {\alpha^{-1} : X_B \to X_A}$. 

We next recall the units  $\eta:1_{\KHaus}\to\X\circ\C$ and $\phi:1_{\bal}\to\C\circ\X$. 
For $X\in\KHaus$ and $x\in X$ let $M_x:=\{f\in C(X) : f(x)=0\}$ be the maximal $\ell$-ideal of $C(X)$ consisting of functions that vanish at $x\in X$, and define 
$\eta_X:X\to X_{C(X)}$ by 
$
\eta_X(x)=M_x.
$
Then $\eta_X$ is a homeomorphism, so $\eta:1_{\KHaus}\to\X\circ\C$ is a natural isomorphism. 

Let $A\in\bal$. For $x\in X_A$, the quotient $A/x$ is 
isomorphic to $\mathbb R$.
Therefore, for each $a\in A$ there is a unique $r\in\mathbb R$ with $a+x=r+x$. 
Thus, we may define $\phi_A :A\to C(X_A)$ by 
$
\phi_A(a)(x)=r
$
for each $x\in X_A$. 
Consequently, $\phi:1_{\bal}\to\C\circ\X$ is a natural transformation. 

Moreover, for $A \in \bal$ and $X \in \KHaus$, we have a natural bijection
\[
\theta : \hom_{\bal}(A, \C X) \to \hom_{\KHaus}(X, \X A)
\]
given by $\theta(\alpha) = \eta_X \circ \C\alpha$ for each $\alpha \in \hom_{\bal}(A, \C X)$, yielding that $\C$ and $\X$ form a contravariant adjunction.

Furthermore, since $\bigcap X_A = 0$, 
$\phi_A$ is a monomorphism in $\bal$. In addition, $\phi_A$ separates points of $X_A$, meaning that for each $x,y\in X_A$ with $x \ne y$, there is $a \in A$ such that $\phi_A(a)(x) \ne \phi_A(a)(y)$. Therefore, 
the Stone-Weierstrass theorem yields that if $A$ is uniformly complete, then $\phi_A$ is an isomorphism. In what follows, we will repeatedly use the following result  (see, e.g., \cite[Prop.~3.3]{BMO13a}). 

\begin{proposition} \label{prop: equivalent conditions for density} Let $A,B \in \bal$ and $\alpha : A \to B$ be a 
monomorphism in $\bal$. The following are equivalent.
\begin{enumerate}
    \item $\alpha[A]$ is uniformly dense in $B$.
    \item $\alpha[A]$ separates points of $X_B$.
    \item $\X \alpha : \X B \to \X A$ is a homeomorphism. 
\end{enumerate} 
\end{proposition}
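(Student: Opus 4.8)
The plan is to route everything through the two canonical maps of Gelfand duality: the representation $\phi_B : B \to C(X_B)$ and the dual map $\X\alpha : X_B \to X_A$, $x \mapsto \alpha^{-1}(x)$. The crucial preliminary observation is the compatibility identity
\[
\phi_A(a)(\X\alpha(x)) = \phi_B(\alpha(a))(x) \qquad (a \in A,\ x \in X_B),
\]
which is immediate from the definitions, since both sides equal the unique real $r$ with $\alpha(a) - r \in x$ (using $\alpha^{-1}(x) = \X\alpha(x)$ and $\alpha(r) = r$). I would also record two standard facts to be used freely. First, $\phi_B$ is an isometry for the respective norms: from $|\phi_B(b)(x)| = \phi_B(|b|)(x)$ and injectivity of $\phi_B$ one gets $\|b\| = \sup_{x \in X_B}|\phi_B(b)(x)|$, so uniform density of $\alpha[A]$ in $B$ is equivalent to uniform density of $\phi_B[\alpha[A]]$ inside $\phi_B[B] \subseteq C(X_B)$. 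Second, $\bigcap X_B = 0$, and since $\alpha$ is a monomorphism it is injective, so $\ker\alpha = 0$.

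First I would dispatch $(2)\Leftrightarrow(3)$. Using the compatibility identity together with the fact that $\phi_A$ separates the points of $X_A$, two points $x,y \in X_B$ satisfy $\X\alpha(x)=\X\alpha(y)$ if and only if $\phi_B(\alpha(a))(x)=\phi_B(\alpha(a))(y)$ for all $a \in A$; hence $\X\alpha$ is injective exactly when $\alpha[A]$ separates the points of $X_B$, i.e.\ exactly under (2). The clever point is that $\X\alpha$ is \emph{automatically} surjective for a monomorphism $\alpha$: the closure of its image is
\[
Z_\ell\Big(\bigcap_{x \in X_B}\alpha^{-1}(x)\Big) = Z_\ell\big(\alpha^{-1}(0)\big) = Z_\ell(0) = X_A,
\]
and the continuous image of the compact space $X_B$ is closed, so the image is all of $X_A$. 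Since a continuous bijection between compact Hausdorff spaces is a homeomorphism, (2) yields that $\X\alpha$ is a homeomorphism, and conversely a homeomorphism is in particular injective, giving (2). This settles $(2)\Leftrightarrow(3)$.

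It remains to prove $(1)\Leftrightarrow(2)$. For $(1)\Rightarrow(2)$ I would use that $\phi_B$ is norm-contractive: given $x \ne y$ in $X_B$, since $\phi_B$ separates points there is $b \in B$ with $\phi_B(b)(x) \ne \phi_B(b)(y)$, and approximating $b$ uniformly by some $\alpha(a)$ keeps the two values distinct, so $\alpha[A]$ separates $x$ and $y$. The substantive direction is $(2)\Rightarrow(1)$, and this is where I expect the real work to sit. Here I would invoke the \textbf{Stone-Weierstrass theorem}: the image $\phi_B[\alpha[A]]$ is a unital sub-$\ell$-algebra of $C(X_B)$ (because $\alpha$ and $\phi_B$ are $\ell$-algebra homomorphisms with $\alpha(1)=1$) which, by (2), separates the points of $X_B$, and therefore it is uniformly dense in $C(X_B)$, a fortiori dense in $\phi_B[B]$; transporting back along the isometry $\phi_B$ shows that $\alpha[A]$ is uniformly dense in $B$, which is (1). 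The main obstacle is thus concentrated in this last step: the only nontrivial analytic input is Stone-Weierstrass, and the one subtlety to check is that density must be argued inside $C(X_B)$ and then pulled back correctly through the isometric embedding $\phi_B$.
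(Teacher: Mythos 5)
Your proposal is correct, but a comparison with ``the paper's own proof'' is moot here: the paper does not prove this proposition at all --- it quotes it from the cited reference \cite{BMO13a} (Prop.~3.3). Your argument is essentially the standard one from that source: the compatibility identity $\phi_A(a)(\X\alpha(x)) = \phi_B(\alpha(a))(x)$, the fact that $\phi_B$ is an isometric embedding (which needs $\bigcap X_B = 0$ for injectivity, as you note), Stone--Weierstrass for $(2)\Rightarrow(1)$, and the compactness argument (continuous bijection of compact Hausdorff spaces is a homeomorphism, image closed hence equal to $Z_\ell(\ker\alpha) = X_A$) for $(2)\Leftrightarrow(3)$. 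All the facts you use are available in the paper's preliminaries, and your $\varepsilon$-perturbation and closure computations are sound.

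The one assertion you make without justification is that a monomorphism in $\bal$ is injective. This is not a formality you can skip silently: your surjectivity computation hinges on $\alpha^{-1}(0) = 0$, and the proposition is false for non-injective $\alpha$ (e.g.\ evaluation $C(X) \to \mathbb{R}$ satisfies (1) and (2) but not (3)). The fact is true and the argument is short: the kernel pair $K = \{(a,a') \in A \times A : \alpha(a) = \alpha(a')\}$ is a unital $\ell$-subalgebra of $A \times A$, which is again bounded and archimedean, so $K \in \bal$; the two projections $K \to A$ are $\bal$-morphisms equalized by $\alpha$, hence equal since $\alpha$ is monic, which forces $K$ to be the diagonal, i.e.\ $\alpha$ is one-to-one. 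With that line added, your proof is complete.
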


\section{\texorpdfstring{$\sbal$}{sbal}-algebras} \label{sec: sbal}

As we saw in the previous section, there is a natural ring to associate with each compact Hausdorff space $X$, namely the ring $C(X)$ of all continuous real-valued functions on $X$. If $(X, \le)$ is a Nachbin space, then it is also natural to consider $$C_\le(X) = \{ f \in C(X) : f \mbox{ is order preserving} \}.$$ 
In order to axiomatize $C_\le(X)$, we recall:

\begin{definition}\plabel{def: ell monoid}
\cite[Def XIV.4.4]{Bir79}  Let $(S, +, 0)$ be a commutative monoid with a lattice order $\le$. We call $S$ an \emph{$\ell$-monoid} if for all $a, b, c \in S$,
\begin{enumerate}[label=(S\arabic*)]
\item \label[def: ell monoid]{S1}$a \le b \iff a + c \le b + c$;
\item \label[def: ell monoid]{S2} $(a \vee b) + c = (a + c) \vee (b + c)$;
\item \label[def: ell monoid]{S3} $(a \wedge b) + c = (a + c) \wedge (b + c)$.
\end{enumerate}
\end{definition}

\begin{remark} \label{rem: sbals are cancellative}
\ref{S1} shows that an $\ell$-monoid is a cancellative monoid. This will be used often in what follows.
\end{remark}

\begin{definition} \label{def: ell monoid morphism}
A map $\alpha : S \to T$ between $\ell$-monoids is an $\ell$-\emph{monoid morphism} provided that $\alpha$ is a lattice morphism which preserves addition. 
\end{definition}

In particular, since $T$ is a cancellative monoid, $\alpha(0) = 0$ for each $\ell$-monoid morphism $\alpha : S \to T$. For an $\ell$-monoid $S$, let $S^+ = \{ a \in S : a \ge 0\}$ be the {\em positive cone} of $S$. The next definition is a modification of \cite[Def.~2.1]{DH18}.

\begin{definition} \plabel{def: ell semialgebra}
An $\ell$-monoid $S$ is an $\ell$-\emph{semialgebra} if 
\begin{enumerate}[label=(S\arabic*)]
\setcounter{enumi}{3}
\item \label[def: ell semialgebra]{S4} there is a commutative, associative multiplication on $S^+$ which has multiplicative identity 1; 
\item \label[def: ell semialgebra]{S5} $a(b+c) = ab + ac$ for all $a,b,c \in S^+$;
\item \label[def: ell semialgebra]{S6} $0 \le a \le b$ and $0 \le c$ imply $ac \le bc$;
\item \label[def: ell semialgebra]{S7} there is a unital $\ell$-monoid morphism $\rho : \mathbb{R} \to S$ which preserves multiplication.
\end{enumerate}
\end{definition}

\begin{remark}
If $S$ is an $\ell$-semialgebra with $\rho : \mathbb{R} \to S$ the associated morphism, then 
$\rho$ is one-one. To see this,
let $r,s \in \mathbb{R}$ with $\rho(r) = \rho(s)$. Without loss of generality, we may assume that $r \ge s$. Then $r = s + t$ with $t \ge 0$. Therefore, $\rho(r) = \rho(s) + \rho(t)$, and so $\rho(t) = 0$. If $t > 0$, then there is $n \in \mathbb{N}$ with $nt \ge 1$. Since $\rho$ is order preserving, $\rho(nt) \ge \rho(1) = 1$. On the other hand, $\rho(nt) = n\rho(t) = 0$. The obtained contradiction proves that $t = 0$, and hence $r = s$.
\end{remark}

In view of the above remark, we make the following:

\begin{convention} \label{conv: R inside S}
For an $\ell$-semialgebra $S$ we identify $\mathbb{R}$ with $\rho[\mathbb{R}]$ and view $\mathbb{R}$ inside $S$. For $a \in S$ and $r \in \mathbb R^+$, we write $a - r$ for $a + (-r)$. 
\end{convention}

\begin{definition} \plabel{def: semibal}
An $\ell$-semialgebra $S$ is 
\begin{enumerate}[label=(S\arabic*)]
\setcounter{enumi}{7}
\item \emph{bounded} if for each $a \in S$ there is $r \in \mathbb{R}^+$ with $-r\le a \le r$;\label[def: semibal]{S8}
\item \emph{archimedean} if for each $a, b, c, d \in S$, if $na + b \le nc + d$ for each $n \in \mathbb{N}$, then $a \le c$.\label[def: semibal]{S9}
\end{enumerate}
\end{definition}

\begin{remark}
The above definition of bounded is 
slightly different from that given 
in \cite[Def.~2.1]{DH18}, where the authors assume that $S=S^+$. On the other hand, our definition of archimedean is the same as that in \cite[Def.~2.1]{DH18}. 
\end{remark}

We can define an $\mathbb{R}^+$-action on $S^+$ by $r a = \rho(r)a$ for each $r \in \mathbb{R}^+$ and $a \in S^+$. The following properties are easy consequences of the definition, where $a,b\in S^+$ and $r,s\in \mathbb R^+$.
\begin{enumerate}
\item $a \le b$ imply $ra \le rb$;
\item $r(a+b) = ra+rb$ and $(r+s)a = ra + sa$;
\item $r(ab) = (ra)b$;
\item $1_\mathbb{R}\, a = a$.
\end{enumerate}

We can extend the $\mathbb{R}^+$-action to all of $S$ as follows. Let $a \in S$ and $r \in \mathbb{R}^+$. By \ref{S8}, there is $s \in \mathbb{R}^+$ with $a + s \ge 0$, and we set
$$r a = r(a + s) - rs.$$ 
To see that this is well defined, let $a+s, a+t \ge 0$. We have $r(a+s+t) = r(a+s) + rt$ and $r(a+s+t) = r(a+t) + rs$. Adding $-rs - rt$ to both sides yields $r(a+s) - rs = r(a+t) - rt$.

\begin{definition} \label{def: sbal}
Let $\sbal$ be the category whose objects are bounded archimedean $\ell$-semialgebras and whose morphisms are unital $\ell$-monoid morphisms $\alpha: S\to T$ that preserve multiplication on $S^+$ and the $\mathbb R^+$-action.
\end{definition}

\begin{remark} \label{rem: alpha r = r}
Let $\alpha$ be an $\sbal$-morphism. Since $\alpha$ preserves $\mathbb R^+$-action, $\alpha(r)=r$ for each $r\in\mathbb R^+$. Moreover, $$0 = \alpha(0) = \alpha(r-r) = \alpha(r) + \alpha(-r) = r + \alpha(-r),$$ so $\alpha(-r) = -r$, and hence $\alpha(r)=r$ for all $r \in \mathbb R$.
\end{remark}

As we pointed out at the beginning of this section, our motivating examples of $\sbal$-algebras are $C_\le(X)$, where $X$ is a Nachbin space. Observe that $\{ f-g : f,g \in C_\le(X)\}$ is the $\bal$-subalgebra of $C(X)$ generated by $C_\le(X)$. We think of this algebra as the $\bal$-envelope of $C_\le(X)$ and show that each $\sbal$-algebra has such an envelope. 
The following definition is 
along the lines of \cite[Def.~2.5]{DH18}.

\begin{definition} \label{def: bal envelope}
For $S \in \sbal$, define the {\em $\bal$-envelope} $\B S$ of $S$ to be the quotient of $S \times S$ modulo the equivalence relation $\sim$ given by
\[
(a,b) \sim (c,d) \iff a+d = b+c.
\]
\end{definition}

We denote the equivalence class of $(a,b)$ by $[a,b]$. We show that each element of $\B S$ can be represented in the form $[a,b]$ with $a,b \ge 0$. Let $a, b \in S$. Since $S$ is bounded, there is $r \in \mathbb{R}^+$ with $a+r, b+r \ge 0$. Therefore, $[a,b] = [a+r, b+r]$.   
This allows us to only consider $[a,b],[c,d] \in \B S$ with $a,b,c,d \ge 0$ and define addition and multiplication on $\B S$ by
\begin{align*}
[a,b] + [c,d] &= [a+c, b+d];\\
[a,b] \cdot [c,d] &= [ac + bd, ad + bc].
\end{align*}

We next define multiplication by positive scalars by 
\[
r[a,b] = [ra, rb] \textrm{ for each }r \in \mathbb{R}^+
\]
and extend it to all scalars by
$$-r[a,b] = r[b, a] \textrm{ for each }r \in \mathbb{R}^+.$$

It is straightforward to see that these operations are well defined, and that $\B S$ is an $\mathbb R$-algebra, where the additive and multiplicative identities 
are $[0,0]$ and $[1,0]$, respectively. 

Finally, we define $\le$ on $\B S$ by
\[
[a,b] \le [c,d] \iff a + d \le c + b.
\]
It follows from \ref{S1} that $\le$ is a partial order on $\B S$.

\begin{proposition} \plabel{prop: UMP for envelope}
Let $S \in \sbal$.
\begin{enumerate}
\item \label[prop: UMP for envelope]{BS in bal} $\B S \in \bal$.
\item \label[prop: UMP for envelope]{varepsilon} Define $\varepsilon : S \to \B S$ by $\varepsilon(a) = [a,0]$. Then $\varepsilon$ is an $\sbal$-embedding, and $$\B S = \{ \varepsilon(a) - \varepsilon(b) : a, b \in S\}.$$
\end{enumerate}
\end{proposition}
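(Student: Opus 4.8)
The plan is to verify, one condition at a time, that $\B S$ satisfies the defining axioms of a $\bal$-algebra, using only the $\ell$-monoid axioms \ref{S1}--\ref{S3}, the $\ell$-semialgebra axioms \ref{S4}--\ref{S9}, and the cancellativity of $S$ (\cref{rem: sbals are cancellative}). Since the paragraph preceding the statement already records that $\B S$ is a commutative $\mathbb R$-algebra and that $\le$ is a partial order, what remains for \cref{BS in bal} is to show that $\le$ is a \emph{lattice} order compatible with the ring operations, and that $\B S$ is bounded, archimedean, and an $\ell$-algebra. Part \cref{varepsilon} is then a direct check that $\varepsilon$ preserves all of the structure and is injective.

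First I would put two elements over a common second coordinate, writing $[a,b]=[a+d,b+d]$ and $[c,d]=[c+b,b+d]$, and then define $[a,b]\vee[c,d]=[(a+d)\vee(c+b),\,b+d]$ and $[a,b]\wedge[c,d]=[(a+d)\wedge(c+b),\,b+d]$. Well-definedness and the fact that these are genuinely the least upper and greatest lower bounds follow from \ref{S2}, \ref{S3} and cancellativity, since for elements sharing a second coordinate the order reduces to $[x,e]\le[y,e]\iff x\le y$. Translation invariance, namely $[a,b]\le[c,d]\Rightarrow[a,b]+[e,f]\le[c,d]+[e,f]$, is immediate from \ref{S1}. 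For boundedness, given $[a,b]$ with $a,b\ge0$, axiom \ref{S8} supplies a real $r$ with $a,b\le r$, whence $-r\le[a,b]\le r$, exhibiting $1=[1,0]$ as a strong order unit. The $\ell$-algebra condition, that $0\le[a,b]$ and $0\le s\in\mathbb R$ imply $0\le s[a,b]$, reduces to $sb\le sa$, which holds by monotonicity of the $\mathbb R^+$-action. Archimedeanness is the clean application of \ref{S9}: if $n[a,b]\le[c,d]$ for all $n$, then $na+d\le nb+c$ for all $n$, so \ref{S9} yields $a\le b$, i.e.\ $[a,b]\le0$.

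The one substantial point is the remaining $\ell$-ring axiom, that $0\le x,y$ implies $0\le xy$, and I expect this to be the main obstacle. Taking $x=[a,b]$, $y=[c,d]$ with $a,b,c,d\ge0$, positivity means $b\le a$ and $d\le c$, and the claim $xy=[ac+bd,\,ad+bc]\ge0$ amounts to the single inequality $ad+bc\le ac+bd$ in $S^+$ --- morally $(a-b)(c-d)\ge0$. Monotonicity \ref{S6} and distributivity \ref{S5} only yield the four comparisons $bd\le ad\le ac$ and $bd\le bc\le ac$, which pin $ac$ as the largest and $bd$ as the smallest of the four products but do \emph{not} by themselves force the ``supermodular'' inequality. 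The obstruction is that the order on $S$ is strictly weaker than the algebraic order: one cannot in general write $a=b+s$ with $s\in S^+$ (already in the motivating example $C_\le(X)$ a difference of order-preserving functions need not be order preserving), so the naive cancellation argument is unavailable. My plan is therefore to deduce $ad+bc\le ac+bd$ from the archimedean axiom \ref{S9} together with boundedness \ref{S8}: reformulating the target equivalently as $(a+d)(b+c)\ge(a+c)(b+d)$, a comparison of two products whose factor-pairs have the common sum $a+b+c+d$, and then bounding $n\bigl[(ad+bc)-(ac+bd)\bigr]$ above by a fixed element (equivalently, showing $ac+bd\ge ad+bc-\epsilon$ for every real $\epsilon>0$) so that \ref{S9} forces the difference to be $\le0$. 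Making this estimate effective is the crux of the whole proposition.

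For \cref{varepsilon}, I would verify that $\varepsilon(a)=[a,0]$ is an $\sbal$-morphism. It preserves addition and both lattice operations because $[a,0]$ and $[b,0]$ already share the second coordinate $0$, so $\varepsilon(a)\vee\varepsilon(b)=[a\vee b,0]=\varepsilon(a\vee b)$ and likewise for $\wedge$ and $+$; it preserves multiplication on $S^+$ since $[a,0][b,0]=[ab,0]$; and it fixes the $\mathbb R^+$-action by the definition $r[a,0]=[ra,0]$. It is injective because $[a,0]=[b,0]$ unwinds through $\sim$ to $a=b$ by cancellativity. Finally, since $-[b,0]=[0,b]$, we have $[a,b]=[a,0]+[0,b]=\varepsilon(a)-\varepsilon(b)$, and as every class admits a representative with $a,b\ge0$, this gives $\B S=\{\varepsilon(a)-\varepsilon(b):a,b\in S\}$, completing the statement.
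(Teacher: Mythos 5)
Your handling of the lattice structure, boundedness, archimedeanness, and part (2) is correct and essentially identical to the paper's own proof: the same join/meet formulas $[a,b]\vee[c,d]=[(a+d)\vee(b+c),b+d]$, the same use of \ref{S1} and \ref{S2}, the same application of \ref{S9} to get archimedeanness, and the same verification that $\varepsilon$ is an injective morphism with $\B S=\{\varepsilon(a)-\varepsilon(b)\}$. The genuine gap is the one you flag yourself and then do not close: you never prove the remaining $\ell$-ring axiom, that $[a,b],[c,d]\ge 0$ implies $[a,b]\cdot[c,d]\ge 0$, i.e.\ the inequality $ad+bc\le ac+bd$ for $b\le a$, $d\le c$ in $S^+$. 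You correctly note that \ref{S5} and \ref{S6} alone do not yield it, and you propose to extract it from \ref{S8} and \ref{S9}, but you end by declaring that estimate "the crux" without carrying it out, so the proposal is incomplete at precisely the step part (1) needs. (For comparison, the paper proves the lattice/bounded/archimedean claims just as you do and then dismisses the $\ell$-algebra verification as "straightforward," so it offers no argument for this inequality either.)

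In fact your suspicion can be sharpened: the inequality is not derivable from \ref{S1}--\ref{S9} at all, so no version of your archimedean strategy (and no "straightforward" check) can succeed. Take $S=\{(x,y)\in\mathbb R^2: x\le y\}$ with componentwise order and addition, $\rho(r)=(r,r)$, and define on $S^+$ the multiplication
\[
(x_1,y_1)\cdot(x_2,y_2)=\left(x_1x_2,\ x_1y_2+y_1x_2-x_1x_2\right),
\]
which is the dual-number algebra $\mathbb R[w]/(w^2)$ written in the basis $u=(1,1)$, $w=(0,1)$, so that $w^2=0$. This is commutative, associative, unital with unit $(1,1)$, closed on $S^+$ (the second coordinate minus the first is $x_1(y_2-x_2)+x_2(y_1-x_1)\ge 0$), distributive over $+$, and monotone in the sense of \ref{S6} (the second coordinate equals $x_1(y_2-x_2)+y_1x_2$, with nonnegative coefficients); moreover $S$ is bounded and archimedean, so $S\in\sbal$ under the paper's definitions. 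Yet with $a=c=(1,1)$ and $b=d=(0,1)$ one has $b\le a$, $d\le c$, while $ad+bc=(0,2)\not\le(1,1)=ac+bd$; equivalently, $z=[(1,1),(0,1)]\ge 0$ in $\B S$ but $z^2=[(1,1),(0,2)]\not\ge 0$. Even your relaxed target $ad+bc\le ac+bd+\epsilon$ fails here for $\epsilon<1$. So the step you left open is not merely difficult but false at this level of generality: part (1) requires an additional hypothesis on the multiplication --- for instance imposing the inequality $ad+bc\le ac+bd$ itself, or an $f$-ring-style law such as $(a\vee b)c=ac\vee bc$ on $S^+$, either of which holds in the motivating examples $C_\le(X)$ and excludes the example above --- and any correct proof must invoke that hypothesis exactly where your proposal (and the paper's proof) currently has a blank.
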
 

\begin{proof}
\ref{BS in bal}
To see that $\B S$ is a 
a lattice, let $a,b,c,d \in S$. We claim that
\[
[a,b] \vee [c,d] = [(a+d)\vee(b+c), b + d].
\]
Since $a + d \le (a+d)\vee (b+c)$, we have $a + b + d \le b + \left((a+d)\vee (b+c)\right)$ by \ref{S1}. This yields $[a,b] \le [(a+d)\vee(b+c), b + d]$. Similarly, $[c,d] \le [(a+d)\vee(b+c), b + d]$. Next, let $[e,f] \in \B S$ with $[a,b], [c,d] \le [e,f]$. Then $a+f \le b+e$ and $c+f \le d+e$. Therefore, $a+d+f, c+b+f \le b+d+e$ by \ref{S1}, and so $(a+d+f)\vee (c+b+f) \le b+d+e$. Thus, $\big((a+d)\vee (c+b)\big) + f \le b + d + e$ by \ref{S2}, which yields $[(a+d)\vee(b+c), b + d] \le [e,f]$. This verifies the claim. A similar calculation shows that the meet is given by 
\[
[a,b] \wedge [c,d] = [(a+d)\wedge(b+c), b + d].
\]
Thus, $\B S$ is a lattice, and it is straightforward to see that it is an $\ell$-algebra.

To see that $\B S$ is bounded, let $a, b \in S$. Since $S$ is bounded, there are $r, s \in \mathbb{R}^+$ with $-r \le a \le r$ and $-s \le b \le s$. Therefore, $a \le r \le b + (r+s)$ and $b \le s \le a + (r+s)$, and hence 
$-(r+s)[1,0] \le [a,b] \le (r+s)[1,0]$.
Finally, to see that $\B S$ is 
archimedean, let $n[a,b] \le [c,d]$ for each $n \in \mathbb{N}$. Then 
$[na, nb] \le [c, d]$, so $na + d \le nb + c$ for each $n \in \mathbb{N}$. Since $S$ is archimedean, $a \le b$, and hence $[a,b] \le [0,0]$. Thus, $\B S \in \bal$.

\ref{varepsilon} 
If $\varepsilon(a) = \varepsilon(b)$, then $[a,0] = [b,0]$, so $(a,0) \sim (b,0)$, and hence $a = b$. That  $\varepsilon$ is an $\sbal$-morphism is obvious, so $\varepsilon$ is an $\sbal$-embedding. Finally, if $[a,b] \in \B S$, then $[a,b]= [a,0] + [0,b] = [a,0]-[b,0] = \varepsilon(a) - \varepsilon(b)$. Thus, $\B S = \{ \varepsilon(a) - \varepsilon(b) : a, b \in S\}$. 
\end{proof}

\begin{remark}
Let $S\in\sbal$. Because $S$ embeds into its $\bal$-envelope $\B S$, we may define the norm on $S$ by restricting the norm $\|\cdot\|$ on $\B S$ to $S$.
\end{remark}

\begin{theorem} \label{thm: bal is reflective}
$\bal$ is a full reflective subcategory of $\sbal$.
\end{theorem}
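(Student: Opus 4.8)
The plan is to exhibit the $\bal$-envelope functor $\B$ as a left adjoint to the inclusion $\bal \hookrightarrow \sbal$, after first checking that this inclusion is full. Recall that a full reflective subcategory is precisely a full subcategory whose inclusion admits a left adjoint.

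First I would verify that every $A \in \bal$ is naturally an object of $\sbal$ and that the inclusion is full. The additive $\ell$-group of $A$ is an $\ell$-monoid, the ring multiplication restricts to $A^+$, and axioms \ref{S4}--\ref{S9} are immediate; in particular \ref{S9} follows by applying the $\bal$-archimedean property to $a - c$ (since $na+b\le nc+d$ for all $n$ means $n(a-c)\le d-b$), and the $\mathbb{R}^+$-action of \cref{conv: R inside S} agrees with scalar multiplication. For fullness, the key observation is that an $\sbal$-morphism $\alpha : A \to B$ between $\bal$-algebras preserves addition as a monoid morphism, hence is a homomorphism of the underlying additive groups, so $\alpha(-a) = -\alpha(a)$. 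Writing $a = a^+ - a^-$ and $b = b^+ - b^-$ with all parts in the positive cone and expanding $ab$ into a signed sum of products of positive elements then shows $\alpha$ preserves multiplication everywhere. Thus $\hom_{\sbal}(A,B) = \hom_{\bal}(A,B)$, and $\bal$ is a full subcategory of $\sbal$.

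For reflectivity, I would take $\B S \in \bal$ together with the $\sbal$-embedding $\varepsilon : S \to \B S$ from \cref{prop: UMP for envelope} as the unit, and prove the universal property: for every $S \in \sbal$, every $A \in \bal$, and every $\sbal$-morphism $f : S \to A$, there is a unique $\bal$-morphism $\bar f : \B S \to A$ with $\bar f \circ \varepsilon = f$. The only possible definition is $\bar f([a,b]) = f(a) - f(b)$, which is well defined since $a + d = b + c$ forces $f(a) - f(b) = f(c) - f(d)$ in the group $A$. Checking that $\bar f$ is a $\bal$-morphism is the technical heart of the argument: additivity and unitality are immediate, whereas multiplicativity and preservation of $\vee$ and $\wedge$ must be verified using representatives with $a,b,c,d \ge 0$ (which exist because $S$ is bounded), so that the relevant products and the explicit lattice formulas from the proof of \cref{prop: UMP for envelope} involve only elements of $S^+$, where $f$ is multiplicative and a lattice morphism. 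Uniqueness follows because any $\bal$-morphism $g$ with $g\circ\varepsilon = f$ must satisfy $g([a,b]) = g(\varepsilon(a)) - g(\varepsilon(b)) = f(a) - f(b)$.

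I expect the main obstacle to be bookkeeping rather than conceptual: one must consistently pass to positive representatives so that the partial multiplication preserved by $f$ and the multiplication on $\B S$ match up, and one must confirm that $\bar f$ respects the lattice by combining the joins and meets computed in \cref{prop: UMP for envelope} with translation invariance \ref{S2}. Once the universal property is established, functoriality of $\B$ and naturality of $\varepsilon$ are automatic, and for $A \in \bal$ the unit $\varepsilon : A \to \B A$ is an isomorphism, since every $[a,b]$ equals $\varepsilon(a-b)$; this confirms that the reflection restricts to (a natural isomorph of) the identity on $\bal$, as required of a full reflective subcategory.
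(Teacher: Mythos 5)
Your proposal is correct and takes essentially the same approach as the paper: fullness is proved by the same decomposition $a = a^+ - a^-$ together with multiplicativity on positive elements, and reflectivity by the same extension $\bar f[a,b] = f(a) - f(b)$ of an $\sbal$-morphism $f : S \to A$ along $\varepsilon : S \to \B S$, with the same uniqueness argument via \cref{varepsilon}. You merely spell out verifications (well-definedness, the lattice computations with positive representatives, and that the unit is an isomorphism on $\bal$) that the paper records as straightforward.
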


\begin{proof}
We first observe that $\bal$ is a full subcategory of $\sbal$. Since each $\bal$-algebra is a $\sbal$-algebra, it is enough to show that each $\sbal$-morphism $\alpha : A \to B$, with $A, B \in \bal$, is a $\bal$-morphism. For this, it is sufficient to show that $\alpha$ preserves multiplication. Let $a, b \in A$. Then $a = a^+ - a^-$ and $b = b^+ - b^-$. Therefore, $\alpha(a) + \alpha(a^-) = \alpha(a + a^-) = \alpha(a^+)$, so $\alpha(a) = \alpha(a^+) - \alpha(a^-)$. Similarly, $\alpha(b) = \alpha(b^+) - \alpha(b^-)$.
This together with $\alpha$ preserving multiplication of positive elements yields
\begin{align*}
\alpha(ab) &= \alpha[(a^+ - a^-)(b^+ - b^-)] = \alpha[a^+b^+ + a^-b^- - (a^+b^- + a^-b^+)] \\
&= \alpha(a^+b^+) + \alpha(a^-b^-) - [\alpha(a^+b^-) + \alpha(a^-b^+)] \\
&= \alpha(a^+)\alpha(b^+) + \alpha(a^-)\alpha(b^-) - [\alpha(a^+)\alpha(b^-) + \alpha(a^-)\alpha(b^+)] \\
&= [\alpha(a^+) - \alpha(a^-)][\alpha(b^+) - \alpha(b^-)] \\
&= \alpha(a)\alpha(b).
\end{align*}
This proves that $\bal$ is a full subcategory of $\sbal$.
To see that $\bal$ is a reflective subcategory of $\sbal$, 
let $\alpha : S \to A$ be an $\sbal$-morphism with $A \in \bal$. Define $\beta : \B S \to A$ by $$\beta[a,b] = \alpha(a) - \alpha(b)$$ for each $a,b \in S$. 
It is straightforward to see that $\beta$ is a well-defined $\bal$-morphism. The diagram below commutes since $\beta\varepsilon(a) = \beta[a,0] = \alpha(a) - \alpha(0) = \alpha(a)$, and $\beta$ is unique by \cref{varepsilon}.
\[
\begin{tikzcd}[column sep = 5pc]
 S \arrow[r, "\varepsilon"] \arrow[dr, "\alpha"'] & \B S \arrow[d, "\beta"] \\
& A
\end{tikzcd}
\]
Thus, $\bal$ is a reflective subcategory of $\sbal$ by \cite[p.~89]{Mac71}.
\end{proof}

\begin{remark} \label{rem: B is a functor}
As follows from the above, taking the $\bal$-envelope of an $\sbal$-algebra extends to a functor $\B:\sbal\to\bal$ which is left adjoint to the inclusion functor $\bal\hookrightarrow\sbal$. The action of $\B$ on an $\sbal$-morphism $\alpha : S \to T$ is given by 
$\B \alpha[a,b] = [\alpha(a), \alpha(b)]$ for each $a,b \in S$.
\end{remark}

\begin{corollary} \label{cor: UMP for envelope}
Let $S \in \sbal$, $A \in \bal$, and $\alpha : S \to A$ be a one-to-one $\sbal$-morphism. Then ${\{ \alpha(a) - \alpha(b) : a, b \in S\}}$ is the $\bal$-subalgebra of $A$ generated by $\alpha[S]$ which is isomorphic to~$\B S$.
\end{corollary}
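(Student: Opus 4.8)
The plan is to invoke the universal property of the $\bal$-envelope established in \cref{thm: bal is reflective}. Since $\alpha : S \to A$ is an $\sbal$-morphism with $A \in \bal$, the reflection furnishes a unique $\bal$-morphism $\beta : \B S \to A$ satisfying $\beta \circ \varepsilon = \alpha$, and by the construction in the proof of \cref{thm: bal is reflective} (see also \cref{rem: B is a functor}) it is given explicitly by $\beta[a,b] = \alpha(a) - \alpha(b)$. Combining this with the description $\B S = \{\varepsilon(a) - \varepsilon(b) : a,b \in S\}$ from \cref{varepsilon}, the image $\beta[\B S]$ is exactly $\{\alpha(a) - \alpha(b) : a,b \in S\}$. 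The whole statement then reduces to two things: identifying this image as the generated subalgebra, and showing $\beta$ is injective.

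First I would record that $\beta[\B S]$ is a $\bal$-subalgebra of $A$, being the homomorphic image of a $\bal$-algebra under a $\bal$-morphism; in particular it is closed under subtraction and the lattice operations, contains $0$ and $1$, and is bounded and archimedean as a sub-$\ell$-algebra of $A$. It contains $\alpha[S]$, since $\alpha(0) = 0$ (as $A$ is cancellative, by the remark after \cref{def: ell monoid morphism}) gives $\alpha(a) = \alpha(a) - \alpha(0) \in \beta[\B S]$ for each $a \in S$. To see it is the subalgebra \emph{generated} by $\alpha[S]$, I would argue minimality: any $\bal$-subalgebra $B'$ of $A$ containing $\alpha[S]$ is closed under subtraction, hence contains $\alpha(a) - \alpha(b)$ for all $a,b \in S$, so $\beta[\B S] \subseteq B'$. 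Being itself such a subalgebra and contained in every such one, $\beta[\B S]$ is the generated subalgebra.

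It then remains to prove $\B S \cong \beta[\B S]$, for which it suffices to show $\beta$ is injective; this is the step where the hypothesis that $\alpha$ is one-to-one is essential, and I regard it as the crux of the argument. Suppose $\beta[a,b] = \beta[c,d]$, that is, $\alpha(a) - \alpha(b) = \alpha(c) - \alpha(d)$. Rearranging and using that $\alpha$ preserves addition yields $\alpha(a+d) = \alpha(b+c)$, whence injectivity of $\alpha$ gives $a + d = b + c$, which is precisely the defining relation $(a,b) \sim (c,d)$ of \cref{def: bal envelope}, so $[a,b] = [c,d]$. Thus $\beta$ is an injective $\bal$-morphism, and its corestriction $\beta : \B S \to \beta[\B S]$ is the desired isomorphism. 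I do not expect a genuine difficulty here; the only point needing care is matching the equation $\alpha(a)-\alpha(b)=\alpha(c)-\alpha(d)$ back to the defining relation of $\B S$, which is exactly where one-to-one-ness of $\alpha$ enters.
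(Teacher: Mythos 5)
Your proposal is correct and follows essentially the same route as the paper: both invoke \cref{thm: bal is reflective} to obtain the unique $\bal$-morphism $\beta : \B S \to A$ extending $\alpha$, and both deduce injectivity of $\beta$ from injectivity of $\alpha$ (the paper via a kernel computation on elements $\varepsilon(a)-\varepsilon(b)$, you via the defining relation $\sim$ of \cref{def: bal envelope}, which is the same calculation). The only difference is that you spell out the minimality argument identifying $\beta[\B S]$ as the subalgebra generated by $\alpha[S]$, a step the paper treats as immediate.
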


\begin{proof}
By \cref{thm: bal is reflective}, there is a $\bal$-morphism $\beta : \B S \to A$ satisfying $\beta \circ \varepsilon = \alpha$. To see that $\beta$ is one-one, let $\varepsilon(a) - \varepsilon(b) \in \ker(\beta)$. Then $\alpha(a) - \alpha(b) = 0$, so $\alpha(a) = \alpha(b)$, and hence $a=b$ because $\alpha$ is one-to-one. Therefore, $\varepsilon(a) - \varepsilon(b) = 0$. Thus, $\beta$ is a $\bal$-isomorphism between $\B S$ and $\{ \beta[a,b] : a,b \in S\}$. Since the latter is equal to $\{ \alpha(a) - \alpha(b) : a, b \in S\}$, we conclude that $\{ \alpha(a) - \alpha(b) : a, b \in S\}$ is the $\bal$-subalgebra of $A$ generated by $\alpha[S]$.
\end{proof} 

\begin{convention} \label{con: BS inside A}
Because of \cref{cor: UMP for envelope}, if $S\in\sbal$ is an $\sbal$-subalgebra of $A\in\bal$, we identify $\B S$ with its image in $A$. 
\end{convention}

\section{Proximities on \texorpdfstring{$\bal$}{bal}-algebras}\label{sec: pbal}

As we pointed out in the previous section, a typical example of an $\sbal$-algebra is the algebra $C_\le(X)$ of continuous order-preserving functions on a Nachbin space $X$. Such functions can be recognized as reflexive elements of the following binary relation on $C(X)$: 
\begin{equation}
f \prec_X g \iff \textrm{there is } c \in C_\le(X) \textrm{ with }f \le c \le g. \label{eqn: le sub X}
\end{equation}
This motivates the following definition. 

\begin{definition}\ \label{def: proximity on bal}
\begin{itemize}[leftmargin=.4in]
\item {\bf Proximity on a lattice.} Let $A$ be a lattice. A binary relation $\prec$ on $A$ is a {\em proximity} if it satisfies: 
\end{itemize}
\begin{enumerate}[label=(P\arabic*)]
\item \label{P1}If $a \prec b$, then $a \le b$.
\item \label{P2} If $a \le b \prec c \le d$, then $a \prec d$.
\item \label{P3} If $a \prec b, c$, then $a \prec b \wedge c$.
\item \label{P4} If $a, b \prec c$, then $a \vee b \prec c$.\item \label{P5} If $a \prec b$, then there is $c$ with $a \prec c \prec b$.
\end{enumerate}
\begin{itemize}[leftmargin=.35in]
\item {\bf Proximity on an $\ell$-ring.} Let $A$ be an $\ell$-ring. Then $\prec$ is a {\em proximity on $A$} if in addition it satisfies 
\end{itemize}
\begin{enumerate}[label=(P\arabic*)]
\setcounter{enumi}{5}
\item \label{P6} If $a \prec b$ and $c \prec d$, then $a + c \prec b +d$.
\item \label{P7} If $0 \le a, b, c, d$ with $a \prec b$ and $c \prec d$, then $ac \prec bd$.
\end{enumerate}
\begin{itemize}[leftmargin=.4in]
\item {\bf Proximity on an $\ell$-algebra.} Let $A$ be an $\ell$-algebra. Then $\prec$ is a {\em proximity on $A$} if furthermore it satisfies 
\end{itemize}
\begin{enumerate}[label=(P\arabic*)]
\setcounter{enumi}{7}
\item \label{P8} If $r \in \mathbb{R}$, then $r \prec r$.
\item \label{P9} If $a \prec c$ and $r \in \mathbb{R}^+$, then $r a \prec r b$.
\end{enumerate}
\end{definition}

We point out that \ref{P1}--\ref{P5} are standard in topology 
(see, e.g., \cite[Thm.~3.9]{NW70}), \ref{P6}--\ref{P7} govern the interaction of $\prec$ with ring operations, and \ref{P8}--\ref{P9} 
that with $\mathbb{R}$-action (see, e.g., \cite{BMMO15b}).
We will mainly work with proximities on $\bal$-algebras. In addition to the above nine basic axioms, we will require the following. 

\begin{definition}
We call a proximity $\prec$ on $A \in \bal$ \emph{reflexive} if it satisfies the following strengthening of \ref{P5}:
\begin{enumerate}[label=(RP\arabic*)]
\setcounter{enumi}{4}
\item \label{RP5} If $a \prec b$, then there is $c$ with $c \prec c$ and  $a \prec c \prec b$.
\end{enumerate}
\end{definition}

\begin{remark} \label{rem: proximity diagram}
It is worth pointing out that if $a$ is reflexive, then $a^+$ is reflexive, but $a^-$ need not be reflexive. 
\end{remark}

\begin{definition}
A {\em proximity $\bal$-algebra} is a pair $(A,\prec)$, where $A \in \bal$ and $\prec$ is a reflexive proximity on $A$.  
Let $\pbal$ be the category whose objects are proximity $\bal$-algebras 
and whose morphisms are $\bal$-morphisms $\alpha: A\to B$ that preserve  proximity; that is, $$a \prec b \Longrightarrow \alpha(a) \prec \alpha(b) \mbox{ for all } a,b \in A.$$
\end{definition}

\begin{remark}
In \cref{rem: comparison}, we compare proximity $\bal$-algebras to other related notions in the literature. 
\end{remark}

We define a functor from $\pbal$ to $\sbal$.

\begin{definition}
For $(A,\prec) \in \pbal$ let $\R(A, \prec) = \{ c \in A : c \prec c\}$, and for a $\pbal$-morphism $\alpha: A\to B$ let $\R\alpha$ be the restriction of $\alpha$ to $\R(A,\prec)$.
\end{definition}

\begin{lemma} \plabel{lem: properties of nbal morphisms}
Let $(A, \prec), (B, \prec) \in \pbal$ and $\alpha : A \to B$ be a $\bal$-morphism.
\begin{enumerate}
\item \label[lem: properties of nbal morphisms]{bal morphism being nbal} $\alpha$ is a $\pbal$-morphism iff $\alpha[\R(A,\prec)] \subseteq \R(B, \prec)$.
\item \label[lem: properties of nbal morphisms]{nbal iso} $\alpha$ is a $\pbal$-isomorphism iff it is a $\bal$-isomorphism which preserves and reflects proximity.
\end{enumerate}
\end{lemma}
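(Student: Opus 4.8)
The plan is to prove each biconditional by unwinding the definitions and using the characterization of $\pbal$-morphisms already established in part (1).

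For part (1), I would argue both directions directly. For the forward direction, suppose $\alpha$ is a $\pbal$-morphism and let $c \in \R(A,\prec)$, so $c \prec c$. Since $\alpha$ preserves proximity, $\alpha(c) \prec \alpha(c)$, which means $\alpha(c) \in \R(B,\prec)$; hence $\alpha[\R(A,\prec)] \subseteq \R(B,\prec)$. The converse is the more substantive direction: assuming $\alpha[\R(A,\prec)] \subseteq \R(B,\prec)$, I must show $a \prec b$ implies $\alpha(a) \prec \alpha(b)$ for all $a,b \in A$. The key tool here is reflexivity \ref{RP5}: given $a \prec b$, there is $c$ with $c \prec c$ and $a \prec c \prec b$. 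By hypothesis $\alpha(c) \in \R(B,\prec)$, so $\alpha(c) \prec \alpha(c)$. Since $\alpha$ is a $\bal$-morphism it is order-preserving, so from $a \le c$ and $c \le b$ (using \ref{P1}) we get $\alpha(a) \le \alpha(c)$ and $\alpha(c) \le \alpha(b)$. Then \ref{P2} applied to $\alpha(a) \le \alpha(c) \prec \alpha(c) \le \alpha(b)$ yields $\alpha(a) \prec \alpha(b)$, as desired. This use of \ref{RP5} to interpose a reflexive element is the crux of the argument, and it is precisely why reflexivity was built into the definition of $\pbal$.

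For part (2), I would use part (1) together with basic category theory. The forward direction is routine: if $\alpha$ is a $\pbal$-isomorphism, then it has a $\pbal$-inverse $\beta$; forgetting proximity, $\alpha$ is a $\bal$-isomorphism with inverse $\beta$, and since both $\alpha$ and $\beta$ preserve proximity, $\alpha$ both preserves and reflects proximity (reflection of proximity for $\alpha$ is exactly preservation by $\beta$, combined with $\beta\alpha = \mathrm{id}$). For the converse, suppose $\alpha$ is a $\bal$-isomorphism that preserves and reflects proximity. Let $\beta = \alpha^{-1}$, which is automatically a $\bal$-morphism. To conclude $\alpha$ is a $\pbal$-isomorphism, it suffices to show $\beta$ is a $\pbal$-morphism, i.e.\ preserves proximity. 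But reflection of proximity by $\alpha$ states that $\alpha(a) \prec \alpha(b)$ implies $a \prec b$; rewriting with $a = \beta(x)$, $b = \beta(y)$ gives exactly $x \prec y \Rightarrow \beta(x) \prec \beta(y)$, so $\beta$ preserves proximity.

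I expect the main obstacle to be the converse of part (1), where one cannot simply transport $a \prec b$ along $\alpha$ directly, since the hypothesis only controls the behavior of $\alpha$ on reflexive elements. The essential move is to insert a reflexive witness $c$ via \ref{RP5}, map it to a reflexive element of $B$, and then sandwich using the order-compatibility axioms \ref{P1} and \ref{P2}; everything else is a matter of bookkeeping with the definitions.
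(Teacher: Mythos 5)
Your proof is correct and takes essentially the same approach as the paper: the converse of (1) is handled by interposing a reflexive witness $c$ via \ref{RP5}, mapping it into $\R(B,\prec)$, and sandwiching with \ref{P1} and \ref{P2}, while (2) is the same unwinding of the categorical notion of isomorphism into preservation of proximity by $\alpha$ and $\alpha^{-1}$. The paper's write-up is merely terser, stating directly that $a \prec b$ yields $c \in \R(A,\prec)$ with $a \le c \le b$ and absorbing your explicit appeals to the axioms.
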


\begin{proof}
\ref{bal morphism being nbal} Suppose that $\alpha$ is a $\pbal$-morphism and $c \in \R(A, \prec)$. Then $c \prec c$, so $\alpha(c) \prec \alpha(c)$. Therefore, $\alpha(c) \in \R(B, \prec)$. Conversely, suppose that $\alpha[\R(A, \prec)] \subseteq \R(B, \prec)$. Let $a, b \in A$ with $a \prec b$. Then there is $c \in \R(A, \prec)$ with $a \le c \le b$. Therefore, $\alpha(a) \le \alpha(c) \le \alpha(b)$. Since $\alpha(c) \in \R(B, \prec)$, we conclude that $\alpha(a) \prec \alpha(b)$. Thus, $\alpha$ is a $\pbal$-morphism.

\ref{nbal iso} Recalling that $\bal$-isomorphisms are bijective $\bal$-morphisms, 
$\alpha$ is a $\bal$-isomorphism preserving and reflecting proximity iff $\alpha$ and $\alpha^{-1}$ are both $\pbal$-morphisms, yielding the result. 
\end{proof}

\begin{proposition} \label{prop: functor R}
$\R:\pbal\to\sbal$ is a well-defined functor. 
\end{proposition}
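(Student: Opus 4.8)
The plan is to verify the two requirements hidden in ``well-defined functor'': that $\R$ sends each object of $\pbal$ to an object of $\sbal$, and that it sends $\pbal$-morphisms to $\sbal$-morphisms compatibly with identities and composition. Fixing $(A,\prec)\in\pbal$, the crux is to show that the set $S:=\R(A,\prec)=\{c\in A:c\prec c\}$ of reflexive elements is closed under the operations inherited from $A$, each closure corresponding to one proximity axiom. Axiom \ref{P8} gives $r\prec r$ for every $r\in\mathbb R$, so $\mathbb R\subseteq S$; in particular $0,1\in S$. If $a,b\in S$, then \ref{P6} yields $a+b\prec a+b$, so $S$ is a submonoid; from $a\prec a\le a\vee b$ and $b\prec b\le a\vee b$ with \ref{P2} and \ref{P4} we get $a\vee b\prec a\vee b$, while $a\wedge b\le a\prec a$ and $a\wedge b\le b\prec b$ with \ref{P2} and \ref{P3} give $a\wedge b\prec a\wedge b$, so $S$ is a sublattice. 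Finally, for $0\le a,b\in S$, applying \ref{P7} to $a\prec a$ and $b\prec b$ gives $ab\prec ab$, so $S^+$ is closed under the multiplication of $A$. Since \ref{S1}--\ref{S3} hold in $A$ they pass to the subset $S$, making it an $\ell$-monoid, and \ref{S4}--\ref{S7} follow by restricting the corresponding structure of $A$ (commutativity, associativity, distributivity \ref{S5}, the $\ell$-ring inequality \ref{S6}, and the inclusion $\rho:\mathbb R\hookrightarrow S$, which is unital and preserves multiplication of positives).

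It then remains to check boundedness and the semialgebra form of archimedeanness. For \ref{S8}, boundedness of $A$ as a $\bal$-algebra gives for each $c\in S$ some $n\in\mathbb N$ with $-n\le c\le n$, and $n\in\mathbb R^+$. For \ref{S9}, I would work inside the ambient $\ell$-ring $A$, where subtraction is available: if $a,b,c,d\in S$ satisfy $na+b\le nc+d$ for all $n\in\mathbb N$, then $n(a-c)\le d-b$ for all $n$, so the archimedean property of $A$ forces $a-c\le 0$, i.e.\ $a\le c$. This yields $\R(A,\prec)\in\sbal$.

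For morphisms, let $\alpha:(A,\prec)\to(B,\prec)$ be a $\pbal$-morphism. Then $\R\alpha=\alpha|_{\R(A,\prec)}$ lands in $\R(B,\prec)$ by \cref{bal morphism being nbal}, so it is well defined. Being the restriction of the $\bal$-morphism $\alpha$, it preserves addition, the lattice operations, and the unit, preserves multiplication of positive elements, and---since $\alpha$ is $\mathbb R$-linear and the $\sbal$-action on $S$ is just the restriction of the $\mathbb R$-action of $A$---preserves the $\mathbb R^+$-action; hence $\R\alpha$ is an $\sbal$-morphism. Functoriality is then immediate: $\R$ preserves identities because restricting the identity gives the identity, and $\R(\beta\circ\alpha)=\R\beta\circ\R\alpha$ because composing restrictions is the restriction of the composite, using $\alpha[\R(A,\prec)]\subseteq\R(B,\prec)$. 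The main obstacle is the object-level verification---in particular matching each closure property of the reflexive elements to the correct proximity axiom and handling \ref{S9} by descending to $A$ to use subtraction; the morphism part is routine once \cref{bal morphism being nbal} is invoked.
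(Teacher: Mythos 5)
Your proof is correct and follows essentially the same route as the paper's: verify axiom-by-axiom that the reflexive elements are closed under the relevant operations (\ref{P2}--\ref{P4} for the lattice operations, \ref{P6} for addition, \ref{P7} for products of positives), and handle morphisms via \cref{bal morphism being nbal}. The only differences are cosmetic: you spell out \ref{S8}, \ref{S9}, and the $\ell$-monoid axioms, which the paper leaves implicit, and you obtain the scalar structure from \ref{P8} and the inclusion $\mathbb{R}\subseteq\R(A,\prec)$, whereas the paper instead cites \ref{P9} for closure under positive scalar multiplication---both are legitimate ways to finish the object-level verification.
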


\begin{proof}
 By \ref{P3} and \ref{P4}, $\R(A,\prec)$ is closed under join and meet; by \ref{P6}, $\R(A,\prec)$ is closed under addition; and by \ref{P7}, $\R(A,\prec)^+$ is closed under multiplication. Finally, $\R(A,\prec)$ is closed under positive scalar multiplication by \ref{P9}. Therefore, 
 $\R(A,\prec) \in \sbal$, and so
 $\R$ is well defined on objects. By \cref{bal morphism being nbal}, $\R$ is also well defined on morphisms, and it clearly preserves composition and identity morphisms. Thus, $\R$ is a well-defined functor.
\end{proof}

To define a functor in the other direction, we require the following:

\begin{definition} \label{ex: proximity from a sub}
Let $A \in \bal$ and $S$ be an $\sbal$-subalgebra of $A$.
Define $\prec_S$ on $A$ by
\[
a \prec_S b \iff \exists s \in S : a \le s \le b. 
\] 
\end{definition}

The proof of the following lemma is straightforward and we skip it.

\begin{lemma} \label{lem: ref prox from semibal}
Let $A \in \bal$ and $S$ be an $\sbal$-subalgebra of $A$. Then $\prec_S$ is a reflexive proximity on $A$ such that $\R(A, \prec_S) = S$.
\end{lemma}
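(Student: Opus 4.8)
The plan is to verify directly that $\prec_S$ satisfies each of the defining conditions \ref{P1}--\ref{P9} together with the reflexivity axiom \ref{RP5}, and then to compute $\R(A,\prec_S)$. Throughout, the only facts I would use are that $S$, being an $\sbal$-subalgebra of $A$, contains $\mathbb{R}$ and is closed under $\vee$, $\wedge$, $+$, the $\mathbb{R}^+$-action, and multiplication on its positive cone, that these operations are the restrictions of the corresponding operations on $A$, and that the $\ell$-algebra operations on $A$ are order-compatible.

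First I would dispatch the lattice axioms. \ref{P1} is immediate from transitivity of $\le$, and \ref{P2} follows by sandwiching the interpolant: if $a \le b \le s \le c \le d$ with $s \in S$, then $a \le s \le d$. For \ref{P3} and \ref{P4}, given interpolants $s_1, s_2 \in S$, the elements $s_1 \wedge s_2$ and $s_1 \vee s_2$ again lie in $S$ and interpolate $a \le s_1 \wedge s_2 \le b \wedge c$ and $a \vee b \le s_1 \vee s_2 \le c$, respectively. Crucially, \ref{P5} needs no genuine refinement: if $a \le s \le b$ with $s \in S$, then $s$ itself witnesses $a \prec_S s \prec_S b$, and since $s \le s \le s$ we even have $s \prec_S s$; this simultaneously yields the strengthened axiom \ref{RP5} with $c = s$.

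Next I would treat the ring and algebra axioms. \ref{P6} uses closure of $S$ under $+$, since interpolants $s_1, s_2$ give $a + c \le s_1 + s_2 \le b + d$. Axiom \ref{P8} is trivial because $r \in \mathbb{R} \subseteq S$, and \ref{P9} uses closure under the $\mathbb{R}^+$-action together with the fact that multiplication by $r \ge 0$ preserves order in $A$. The one step requiring care is \ref{P7}: from $0 \le a \le s_1 \le b$ and $0 \le c \le s_2 \le d$ one first notes that the interpolants are forced to be positive, so that $s_1 s_2$ is defined in $S^+$ and, by the convention that $S$ is a subalgebra, agrees with the product in $A$; then two applications of monotonicity of multiplication by positive elements give $ac \le s_1 s_2 \le bd$. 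I expect this positivity and subalgebra bookkeeping to be the only potential snag, and it is minor.

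Finally, for the identity $\R(A,\prec_S) = S$, I would unwind the definition: $c \in \R(A,\prec_S)$ means $c \prec_S c$, i.e. there is $s \in S$ with $c \le s \le c$. By antisymmetry of $\le$ this forces $s = c$, so $c \prec_S c$ holds precisely when $c \in S$. Hence $\R(A,\prec_S) = S$, completing the verification.
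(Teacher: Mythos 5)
Your proposal is correct, and it matches the paper's intent exactly: the paper declares this lemma ``straightforward'' and omits the proof, and your direct verification of (P1)--(P9), the strengthened axiom (RP5) via the observation that the interpolant $s \in S$ is itself reflexive, and the antisymmetry argument for $\R(A,\prec_S)=S$ is precisely the routine check being skipped. The one step you flag as delicate --- forced positivity of the interpolants in (P7) so that the product is taken in $S^+$ and agrees with the product in $A$ --- is handled correctly.
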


\begin{proposition} \label{prop: functor B}
There is a functor $\B : \sbal \to \pbal$.
\end{proposition}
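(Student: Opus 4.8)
The plan is to lift the $\bal$-envelope functor of \cref{rem: B is a functor} along the obvious forgetful functor $\pbal \to \bal$ (which forgets the proximity and is clearly faithful). Concretely, on objects I would send $S \in \sbal$ to the pair $(\B S, \prec_{\varepsilon[S]})$, where $\B S$ is the $\bal$-envelope, $\varepsilon = \varepsilon_S : S \to \B S$ is the embedding of \cref{varepsilon}, and $\varepsilon[S]$ is regarded as an $\sbal$-subalgebra of $\B S$ (it is one, since $\varepsilon_S$ is an $\sbal$-embedding). The relation $\prec_{\varepsilon[S]}$ is the one of \cref{ex: proximity from a sub}. By \cref{lem: ref prox from semibal} (applied with $A = \B S$ and subalgebra $\varepsilon[S]$), $\prec_{\varepsilon[S]}$ is a reflexive proximity on $\B S$ and its reflexive part satisfies $\R(\B S, \prec_{\varepsilon[S]}) = \varepsilon[S]$. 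Hence $(\B S, \prec_{\varepsilon[S]}) \in \pbal$, and this defines $\B$ on objects.

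On morphisms, given an $\sbal$-morphism $\alpha : S \to T$, I would take $\B\alpha : \B S \to \B T$ to be exactly the $\bal$-morphism already supplied by the envelope functor, namely $\B\alpha[a,b] = [\alpha(a), \alpha(b)]$. The only new thing to verify is that $\B\alpha$ is a $\pbal$-morphism, i.e.\ that it preserves the proximities. By \cref{bal morphism being nbal} it suffices to check that $\B\alpha$ carries reflexive elements to reflexive elements, that is $\B\alpha\bigl[\R(\B S, \prec_{\varepsilon[S]})\bigr] \subseteq \R(\B T, \prec_{\varepsilon[T]})$. Since these two reflexive parts are $\varepsilon_S[S]$ and $\varepsilon_T[T]$ respectively, this collapses to the one-line computation $\B\alpha(\varepsilon_S(s)) = \B\alpha[s,0] = [\alpha(s), \alpha(0)] = [\alpha(s), 0] = \varepsilon_T(\alpha(s))$ for $s \in S$, which lands in $\varepsilon_T[T]$ as required.

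Finally, functoriality is inherited: $\B$ preserves identities and composition because it already does so as the $\bal$-envelope functor $\sbal \to \bal$ of \cref{rem: B is a functor}, and its values and morphisms agree with that functor after forgetting the proximity. I do not expect any genuine obstacle here, as the substantive work has been front-loaded into the lemmas: \cref{lem: ref prox from semibal} is what certifies that each $(\B S, \prec_{\varepsilon[S]})$ is an object of $\pbal$ and pins down its reflexive part, while \cref{bal morphism being nbal} reduces proximity-preservation to behaviour on reflexive elements. The only point demanding a little care is the identification of $S$ with $\varepsilon[S]$ inside $\B S$, so that the reflexive parts of source and target are precisely the images of the respective semialgebras; once this is made explicit, the morphism check reduces to the displayed computation.
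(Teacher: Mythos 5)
Your proposal is correct and follows essentially the same route as the paper: define $\B$ on objects as $(\B S, \prec_{\varepsilon[S]})$ using \cref{lem: ref prox from semibal}, take $\B\alpha$ from the envelope functor of \cref{rem: B is a functor}, and verify proximity-preservation via \cref{bal morphism being nbal}. The only difference is cosmetic --- the paper suppresses the embedding by identifying $S$ with $\varepsilon[S]$, whereas you carry $\varepsilon_S$, $\varepsilon_T$ explicitly and display the one-line computation the paper leaves implicit.
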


\begin{proof}
For simplicity, we identify each $S \in \sbal$ with its image $\varepsilon[S] \subseteq \B S$. Now associate with each $S$ the pair $(\B S, \prec_S)$ and with each $\sbal$-morphism $\alpha: S \to T$ the morphism $\B\alpha : \B S \to \B T$. By \cref{lem: ref prox from semibal}, $(\B S, \prec_S) \in \pbal$. By \cref{rem: B is a functor}, $\B \alpha$ is a $\bal$-morphism, and $\B\alpha$ 
preserves $\prec_S$ by \cref{bal morphism being nbal}. Finally, it is clear that $\B$ preserves 
composition and identity morphisms.
\end{proof}

Let $(A, \prec) \in \pbal$. By \cref{con: BS inside A}, we identify $\B\R(A, \prec)$ with the $\ell$-subalgebra of $A$ generated by $\R(A,\prec)$.


\begin{definition}\ \label{def: enbal}
\begin{enumerate}
\item For $(A,\prec)\in\pbal$, we call $\R(A,\prec)$ the {\em $\sbal$-skeleton} of $(A,\prec)$, and $(A,\prec)$ is said to be {\em skeletal} provided $A=\B\R(A,\prec)$.
\item Let $\spbal$ be the full subcategory of $\pbal$ consisting of skeletal $(A, \prec)\in\pbal$.
\end{enumerate}
\end{definition}

\begin{theorem} \label{thm: sbal = bnbal}
$\B$ is left adjoint to $\R$, and the two yield an equivalence of $\sbal$ and $\spbal$.
\end{theorem}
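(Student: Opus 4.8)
The plan is to establish the adjunction $\B \dashv \R$ directly through its universal property, leveraging the reflection of \cref{thm: bal is reflective}, and then to cut it down to the claimed equivalence by identifying on which objects the unit and counit are isomorphisms. I would begin by describing the two natural transformations. For $S \in \sbal$, \cref{lem: ref prox from semibal} gives $\R\B S = \R(\B S, \prec_S) = S$, so the unit $\eta_S : S \to \R\B S$ is the identity, hence an isomorphism for \emph{every} $S$. For $(A, \prec) \in \pbal$, the counit $\epsilon_{(A,\prec)} : \B\R(A,\prec) \to A$ is the inclusion of the $\ell$-subalgebra $\B\R(A,\prec)$ generated by $\R(A,\prec)$, as identified in \cref{con: BS inside A} via \cref{cor: UMP for envelope}.

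Next I would verify the universal property. Given an $\sbal$-morphism $f : S \to \R(A,\prec)$, compose it with the inclusion $\R(A,\prec) \hookrightarrow A$ and apply \cref{thm: bal is reflective} to obtain the unique $\bal$-morphism $\beta : \B S \to A$ with $\beta\circ\varepsilon = f$, namely $\beta[a,b] = f(a) - f(b)$. Since $\beta[\R(\B S, \prec_S)] = \beta[S] = f[S] \subseteq \R(A,\prec)$, \cref{bal morphism being nbal} shows $\beta$ is in fact a $\pbal$-morphism $\B S \to (A,\prec)$. Conversely, any $\pbal$-morphism $g : \B S \to (A,\prec)$ restricts to $g|_S : S \to \R(A,\prec)$, again by \cref{bal morphism being nbal}. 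Using the uniqueness clause of \cref{thm: bal is reflective}, these two assignments are mutually inverse, and they are natural in both variables; this produces the natural bijection $\hom_{\pbal}(\B S, (A,\prec)) \cong \hom_{\sbal}(S, \R(A,\prec))$, establishing $\B \dashv \R$.

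For the equivalence I would invoke the standard fact that an adjunction restricts to an equivalence between the full subcategory of the domain on which the unit is an isomorphism and the full subcategory of the codomain on which the counit is an isomorphism. Because $\eta_S$ is always an isomorphism, the $\sbal$-side is all of $\sbal$, and it remains only to characterize when $\epsilon_{(A,\prec)}$ is a $\pbal$-isomorphism. By \cref{nbal iso}, this holds iff $\epsilon_{(A,\prec)}$ is a $\bal$-isomorphism that preserves and reflects proximity; it is a $\bal$-isomorphism precisely when $A = \B\R(A,\prec)$, that is, when $(A,\prec)$ is skeletal.

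The crux, and what I expect to be the main obstacle, is the key identity that for every $(A,\prec) \in \pbal$ one has $\prec\, =\, \prec_{\R(A,\prec)}$. Here reflexivity \ref{RP5} is essential: if $a \prec b$, it yields some $c \in \R(A,\prec)$ with $a \le c \le b$, whence $a \prec_{\R(A,\prec)} b$; the converse direction uses \ref{P2} together with $c \prec c$. Granting this identity, the counit inclusion automatically preserves and reflects proximity, since the proximity carried by $\B\R(A,\prec)$ is $\prec_{\R(A,\prec)}$, which agrees with the restriction of $\prec$. Consequently the only obstruction to $\epsilon_{(A,\prec)}$ being an isomorphism is surjectivity, i.e.\ skeletality, so the counit is an isomorphism exactly on the objects of $\spbal$. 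Therefore $\B$ and $\R$ restrict to mutually inverse equivalences between $\sbal$ and $\spbal$, as claimed.
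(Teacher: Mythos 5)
Your proposal is correct and takes essentially the same route as the paper: both obtain the adjunction from the reflection of \cref{thm: bal is reflective} together with \cref{bal morphism being nbal} (the hom-set bijection $\hom_{\sbal}(S,\R(A,\prec)) \cong \hom_{\pbal}(\B S,(A,\prec))$ is exactly the paper's map $\xi$), and both identify $\spbal$ as the subcategory on which this adjunction restricts to an equivalence. Your explicit unit/counit analysis---including the identity ${\prec} = {\prec_{\R(A,\prec)}}$, which is precisely where \ref{RP5} and \ref{P2} enter---is a more detailed rendering of the paper's terse closing observation that the essential image of $\B\R$ is $\spbal$.
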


\begin{proof}
Let $S \in \sbal$ and $(A, \prec) \in \pbal$. We show that there is a bijection 
\[
\hom_{\sbal}(S, \R(A,\prec)) \to \hom_{\pbal}(\B S, (A, \prec)).
\]
Let $\alpha : S \to \R(A, \prec)$ be an $\sbal$-morphism. By \cref{thm: bal is reflective}, there is a unique $\bal$-morphism
$\beta : \B S \to A$ extending $\alpha$. 
Therefore, $\beta$ is an $\pbal$-morphism $\B S \to (A, \prec)$ by \cref{bal morphism being nbal}. Define $$\xi 
: \hom_{\sbal}(S, \R(A,\prec)) \to \hom_{\pbal}(\B S, (A, \prec))$$ 
by $\xi(\alpha) = \beta$ for each $\alpha \in \hom_{\sbal}(S, \R(A, \prec))$. To see that $\xi$ is one-one, let $\xi(\alpha_1) = \xi(\alpha_2)$ for $\alpha_1, \alpha_2 : S \to \R(A, \prec)$. Then $\beta_1 = \beta_2$, so $\alpha_1 = \beta_1|_S = \beta_2|_S = \alpha_2$. To see that $\xi$ is onto, let $\sigma : \B S \to (A, \prec)$ be a $\pbal$-morphism. Set $\alpha = \sigma|_S$. Then $\alpha : S \to (A, \prec)$ is an $\sbal$-morphism. By \cref{thm: bal is reflective},
$\beta : \B S \to A$ is the unique $\bal$-morphism extending $\alpha$. Therefore, $\sigma = \beta = \xi(\alpha)$. Consequently, $\xi$ is onto, hence a bijection.

Let $\alpha : S \to T$ be an $\sbal$-morphism and $\sigma : (A, \prec) \to (B, \prec)$ a $\pbal$-morphism. Set $\gamma = \sigma|_{\R(A, \prec)}$. Naturality of $\xi$ follows from the commutativity of the following  diagram, where $\alpha^*(\beta) =  \beta \circ \alpha$ for $\beta \in \hom_{\sbal}(T, \R(A \prec))$, $(\B \alpha)^*(\tau) = \tau \circ \B \alpha$ for $\tau \in \hom_{\pbal}(\B T, (A, \prec))$, $\gamma_*(\beta) = \gamma \circ \beta$ for $\beta \in \hom_{\sbal}(S, \R(A, \prec))$, and $\sigma_*(\tau) = \sigma \circ \tau$ for $\tau \in \hom_{\pbal}(\B S, (A, \prec))$ .
\[
\begin{tikzcd}
\hom_{\sbal}(T, \R(A \prec)) \arrow[r, "\xi"] \arrow[d, "\alpha^*"'] & \hom_{\pbal}(\B T, (A, \prec)) \arrow[d, "(\B\alpha)^*"] \\
\hom_{\sbal}(S, \R(A, \prec) \arrow[r, "\xi"] \arrow[d, "\gamma_*"'] & \hom_{\pbal}(\B S, (A, \prec)) \arrow[d, "\sigma_*"] \\
\hom_{\sbal}(S, \R(B, \prec) \arrow[r, "\xi"'] & \hom_{\pbal}(\B S, (B, \prec))
\end{tikzcd}
\]

Thus, $\B$ is left adjoint to $\R$. The essential image of $\B\R$ is $\spbal$,  
and hence $\B$ and $\R$ 
form an equivalence between $\sbal$ and $\spbal$. 
\end{proof}

As an immediate consequence of the above we obtain: 

\begin{corollary}
    $\spbal$ is a coreflective subcategory of $\pbal$.
\end{corollary}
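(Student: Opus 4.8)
The goal is to show that $\spbal$ is a coreflective subcategory of $\pbal$, which by definition means that the inclusion functor $\spbal \hookrightarrow \pbal$ has a right adjoint. The plan is to build this right adjoint directly from the machinery already established in \cref{thm: sbal = bnbal}, exploiting the fact that the composite $\B\R$ is an endofunctor on $\pbal$ whose essential image is exactly $\spbal$.

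First I would observe that \cref{thm: sbal = bnbal} gives the adjunction $\B \dashv \R$ between $\sbal$ and $\pbal$, together with the fact that $\B$ and $\R$ corestrict to an equivalence between $\sbal$ and $\spbal$. Writing $J : \spbal \hookrightarrow \pbal$ for the inclusion, I would define the candidate right adjoint $G : \pbal \to \spbal$ to be $G = \B\R$, where for $(A,\prec)\in\pbal$ the object $G(A,\prec) = \B\R(A,\prec)$ is, by \cref{con: BS inside A} and \cref{def: enbal}, the $\ell$-subalgebra of $A$ generated by the skeleton $\R(A,\prec)$, equipped with the induced proximity. This object is skeletal by construction, so it indeed lands in $\spbal$. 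The counit of the desired adjunction $J \circ G \to 1_{\pbal}$ should be the inclusion $\B\R(A,\prec) \hookrightarrow (A,\prec)$, which is a genuine $\pbal$-morphism since the skeleton sits inside $A$ and the proximity on $\B\R(A,\prec)$ is the restriction of $\prec$.

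The heart of the argument is to verify the universal property: for every skeletal $(C,\prec)\in\spbal$ and every $\pbal$-morphism $f : (C,\prec) \to (A,\prec)$, there is a unique $\pbal$-morphism $\tilde f : (C,\prec) \to \B\R(A,\prec)$ factoring $f$ through the counit. The key step is that by \cref{bal morphism being nbal} the morphism $f$ carries $\R(C,\prec)$ into $\R(A,\prec)$; since $C$ is skeletal, $C = \B\R(C,\prec)$ is generated as a $\bal$-algebra by $\R(C,\prec)$, so the image $f[C]$ lands inside the $\bal$-subalgebra of $A$ generated by $f[\R(C,\prec)] \subseteq \R(A,\prec)$, which is precisely $\B\R(A,\prec)$. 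Hence $f$ corestricts to $\tilde f : (C,\prec) \to \B\R(A,\prec)$, and this corestriction is forced to be unique because $\B\R(A,\prec)\hookrightarrow A$ is a monomorphism. One must also check that $\tilde f$ still preserves proximity, but this is immediate since $\tilde f$ agrees with $f$ as a set map and the proximity on $\B\R(A,\prec)$ is the restriction of that on $A$.

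I expect the main obstacle to be purely bookkeeping rather than conceptual: one has to be careful that the proximity induced on $\B\R(A,\prec)$ as a skeletal object (namely $\prec_{\R(A,\prec)}$ from \cref{ex: proximity from a sub}) coincides with the restriction of $\prec$ from $A$, so that the counit inclusion and the factorization $\tilde f$ are honestly proximity-preserving $\pbal$-morphisms and not merely $\bal$-morphisms. Once this identification is in hand, the universal property follows formally. Alternatively, and perhaps more cleanly, I could simply invoke the general categorical fact that whenever a left adjoint $\B$ restricts to an equivalence onto a full subcategory $\spbal$ whose essential image is the image of $\B$, the composite $\B\R$ provides the coreflection; this lets me derive the corollary immediately from \cref{thm: sbal = bnbal} without reconstructing the universal property by hand.
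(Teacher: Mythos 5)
Your proof is correct, but its main argument takes a genuinely different route from the paper's, while the alternative you sketch in your last sentence is essentially the paper's own proof. The paper's proof is purely formal: by the proof of \cref{thm: sbal = bnbal}, $\B$ is fully faithful, and a fully faithful left adjoint $\B \dashv \R$ automatically makes the composite $\B\R$ right adjoint to the inclusion of the essential image $\spbal \hookrightarrow \pbal$ (the paper cites Mac~Lane for this). Your primary argument instead verifies the universal property by hand: the counit is the inclusion $\iota_A : \B\R(A,\prec) \hookrightarrow (A,\prec)$, and a $\pbal$-morphism $f : (C,\prec) \to (A,\prec)$ from a skeletal object corestricts because $f[\R(C,\prec)] \subseteq \R(A,\prec)$ by \cref{bal morphism being nbal} and, by skeletality together with \cref{varepsilon}, $C = \{a - b : a,b \in \R(C,\prec)\}$, so $f[C] \subseteq \{c - d : c,d \in \R(A,\prec)\} = \B\R(A,\prec)$; uniqueness holds since $\iota_A$ is monic. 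The bookkeeping point you flag but do not carry out---that the proximity $\prec_{\R(A,\prec)}$ carried by the skeletal object coincides with the restriction of $\prec$ to $\B\R(A,\prec)$---is indeed the one remaining step, and it is precisely where reflexivity enters: if $a \prec_{\R(A,\prec)} b$, then $a \le c \le b$ for some reflexive $c$, so $a \prec b$ by \ref{P2}; conversely, if $a \prec b$, then \ref{RP5} yields a reflexive $c$ with $a \prec c \prec b$, hence $a \le c \le b$ by \ref{P1}, so $a \prec_{\R(A,\prec)} b$. (The paper asserts this same identification, without proof, inside \cref{lem: PR to 1 is a bimorphism}.) As for what each approach buys: the paper's argument is shorter and avoids all element-level checks, whereas yours makes the coreflection counit explicit---it is the same map $\iota_A$ that reappears later in \cref{lem: PR to 1 is a bimorphism}---and exposes concretely that axiom \ref{RP5} is what makes the construction work at the level of proximities.
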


\begin{proof}  
    By the proof of \cref{thm: sbal = bnbal}, $\B$ is fully faithful. Therefore, $\B\R$ is right adjoint to the inclusion functor $\spbal\hookrightarrow\pbal$, and hence $\spbal$ is a coreflective subcategory of $\pbal$ (see, e.g., \cite[p.~89]{Mac71}).  
\end{proof}

\section{Nachbin proximities on \texorpdfstring{$\bal$}{bal}-algebras} \label{sec: Nachbin proximities}

As we pointed out, our main examples of $\sbal$-algebras are $C_\le(X)$ and those of proximity $\bal$-algebras are $(C(X),\prec_X)$, where $X$ is a Nachbin space and $\prec_X$ is defined in \cref{eqn: le sub X}. We now show that the correspondences $X \mapsto C_\le(X)$ and $X \mapsto (C(X),\prec_X)$ extend to functors.

\begin{proposition} \label{prop: C functor}
    There are functors $\C:\Nach\to\pbal$ and $\C_\le:\Nach\to\sbal$ such that $\R\circ\C = \C_\le$.
\[
\begin{tikzcd}
\pbal \arrow[rr, "\R"] && \sbal \\
& \Nach \arrow[ul, "\C"] \arrow[ur, "\C_\le"']
\end{tikzcd}
\]
\end{proposition}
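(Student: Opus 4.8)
The plan is to define $\C$ explicitly and then obtain $\C_\le$ as the composite $\R\circ\C$, so that the identity $\R\circ\C=\C_\le$ holds by construction. On objects I set $\C X=(C(X),\prec_X)$, where $\prec_X$ is as in \cref{eqn: le sub X}. The first task is to check that $C_\le(X)$ is an $\sbal$-subalgebra of $C(X)$: it contains the constants, and is closed under $+$, $\vee$, $\wedge$, the $\mathbb{R}^+$-action, and multiplication on its positive cone, since each of these operations preserves order-preservation of functions (for multiplication, if $0\le f\le$ and $0\le g$ are order-preserving and $x\le y$, then $f(x)g(x)\le f(y)g(y)$ because all values are nonnegative). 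Observing that $\prec_X$ is exactly the relation $\prec_{C_\le(X)}$ of \cref{ex: proximity from a sub}, \cref{lem: ref prox from semibal} immediately gives that $\prec_X$ is a reflexive proximity on $C(X)$ with $\R(C(X),\prec_X)=C_\le(X)$. Hence $\C X\in\pbal$.

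For morphisms, given a continuous order-preserving $\varphi:X\to Y$, I take $\C\varphi=(-)\circ\varphi$, which is already known to be a $\bal$-morphism $C(Y)\to C(X)$ from Gelfand duality. To see it is a $\pbal$-morphism I invoke \cref{bal morphism being nbal}: it suffices to verify that $\C\varphi$ maps $\R(C(Y),\prec_Y)=C_\le(Y)$ into $\R(C(X),\prec_X)=C_\le(X)$. This reduces to the single observation that the pullback of an order-preserving function is order-preserving, namely that if $f\in C_\le(Y)$ and $x_1\le x_2$ in $X$, then $\varphi(x_1)\le\varphi(x_2)$ in $Y$ (as $\varphi$ is order-preserving), whence $f(\varphi(x_1))\le f(\varphi(x_2))$. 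Functoriality of $\C$ (preservation of identities and composition) is inherited from the fact that $\C\varphi$ is just the usual contravariant assignment $f\mapsto f\circ\varphi$.

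Finally I define $\C_\le:=\R\circ\C$. Since $\C:\Nach\to\pbal$ and $\R:\pbal\to\sbal$ (\cref{prop: functor R}) are functors, $\C_\le$ is a functor $\Nach\to\sbal$ automatically, and $\R\circ\C=\C_\le$ holds by definition. It remains to identify this composite with the intended assignment: on objects $\C_\le X=\R(C(X),\prec_X)=C_\le(X)$ by the computation above, and on morphisms $\C_\le\varphi=\R(\C\varphi)$ is the restriction of $(-)\circ\varphi$ to $C_\le(Y)$, which lands in $C_\le(X)$ and is therefore the expected $\sbal$-morphism. The only genuine content of the argument — and the step I would treat most carefully — is the verification that $C_\le(X)$ is an $\sbal$-subalgebra (in particular closure of its positive cone under multiplication) together with the compatibility of pullback with the order; everything else collapses into the already-established \cref{lem: ref prox from semibal,bal morphism being nbal}. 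There is no serious obstacle, only the bookkeeping of confirming that the order-theoretic conditions survive each algebraic operation.
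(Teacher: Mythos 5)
Your proposal is correct and follows essentially the same route as the paper: both treat $(C(X),\prec_X)$ via \cref{lem: ref prox from semibal}, reduce the morphism check to the observation that $f\circ\varphi$ is order preserving when $f$ and $\varphi$ are, and conclude $\R\circ\C=\C_\le$ from $\R(C(X),\prec_X)=C_\le(X)$. Your version merely spells out the routine verification that $C_\le(X)$ is an $\sbal$-subalgebra (which the paper takes as already observed) and packages $\C_\le$ as the composite $\R\circ\C$ by definition rather than checking the equality afterwards; these are presentational differences only.
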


\begin{proof}
We have seen that $\C$ and $\C_\le$ are well defined on objects. For morphisms, let $\varphi : X \to Y$ be a $\Nach$-morphism. Then $\C\varphi : C(Y) \to C(X)$, given by $\C\varphi(f) = f \circ \varphi$, sends $C_\le(Y)$ to $C_\le(X)$ since $f$ and $\varphi$ are order preserving. Because $\C\varphi$ is a $\bal$-morphism, it follows that $\C\varphi$ is a $\pbal$-morphism and its restriction to $C_\le(Y)$ is an $\sbal$-morphism. Clearly both $\C$ and $\C_\le$ preserve compositions and identity morphsims.
Therefore, $\C$ and $\C_\le$
are functors, and $\R\circ\C = \C_\le$ since $\R C(X) = C_\le(X)$.
\end{proof}

On the other hand, $\B C_\le(X)$ is a $\bal$-subalgebra of $C(X)$, which in general is properly contained in $C(X)$.

\begin{lemma} \label{BC_le dense in C}
$\B C_\le(X)$ is uniformly dense in $C(X)$.
\end{lemma}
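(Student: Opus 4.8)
The plan is to apply \cref{prop: equivalent conditions for density} to the inclusion $\iota:\B C_\le(X)\hookrightarrow C(X)$, which is a $\bal$-monomorphism. By that proposition, $\B C_\le(X)$ is uniformly dense in $C(X)$ precisely when $\B C_\le(X)$ separates the points of $X_{C(X)}$. Since $\eta_X:X\to X_{C(X)}$ is a homeomorphism sending each $x$ to $M_x$, and a function $h\in C(X)$ has $h(x)\ne h(y)$ exactly when it separates $M_x$ from $M_y$, separating the points of $X_{C(X)}$ is the same as separating the points of $X$. Thus it suffices to show that for distinct $x,y\in X$ there is $h\in\B C_\le(X)$ with $h(x)\ne h(y)$.

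Next I would reduce this to a statement about $C_\le(X)$ alone. By \cref{cor: UMP for envelope} we have $\B C_\le(X)=\{f-g : f,g\in C_\le(X)\}$, so a difference $f-g$ separates $x$ and $y$ if and only if $C_\le(X)$ itself does: were every $f\in C_\le(X)$ to take equal values at $x$ and $y$, then so would every difference $f-g$. Hence it is enough to produce a single continuous order-preserving function taking different values at $x$ and $y$.

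The heart of the argument is the Nachbin (Urysohn-type) separation theorem for compact ordered spaces. Given $x\ne y$, antisymmetry of $\le$ yields $x\not\le y$ or $y\not\le x$; say $x\not\le y$. The principal upset ${\uparrow}x$ and the principal downset ${\downarrow}y$ are closed, since the order is closed in $X\times X$, and they are disjoint: any $z$ with $x\le z\le y$ would force $x\le y$ by transitivity, a contradiction. Nachbin's theorem then provides a continuous order-preserving $f:X\to[0,1]$ with $f\equiv 1$ on ${\uparrow}x$ and $f\equiv 0$ on ${\downarrow}y$; in particular $f(x)=1\ne 0=f(y)$. Therefore $C_\le(X)$ separates the points of $X$, and the proof concludes by the reductions above.

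The step I expect to be the main obstacle is the invocation of the Nachbin separation theorem, the order-theoretic analogue of Urysohn's lemma, which is precisely where the closedness of the order (equivalently, order-Hausdorffness) is genuinely used; everything else is a routine translation through \cref{prop: equivalent conditions for density} together with the point-separation reduction.
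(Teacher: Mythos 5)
Your proof is correct and follows essentially the same route as the paper: both reduce, via \cref{prop: equivalent conditions for density}, to showing that $\B C_\le(X)$ (equivalently $C_\le(X)$) separates points of $X$, and then invoke Nachbin's separation theory for compact ordered spaces. The only difference is cosmetic: the paper cites Nachbin's point-separation theorem directly, while you re-derive it from the order-theoretic Urysohn lemma applied to the disjoint closed sets ${\uparrow}x$ and ${\downarrow}y$.
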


\begin{proof}
By \cref{prop: equivalent conditions for density}, it is sufficient to verify that $\B C_\le(X)$ separates points of $X$. 
Let $x, y \in X$ with $x \ne y$. Since $\le$ is a partial order, we may assume without loss of generality that $x \not\le y$. Because $X \in \Nach$, continuous order-preserving functions on $X$ separate points of $X$ (see \cite[p.~55, Thm.~ 7]{Nac65}). Therefore, there is $f \in C_\le(X)$ with $f(x) > f(y)$, and hence $\B C_\le(X)$ is uniformly dense in $C(X)$.
\end{proof}

This motivates the following:

\begin{definition}
    Let $A\in\bal$ and $\prec$ be a reflexive proximity on $A$. 
    \begin{enumerate}
        \item We call $\prec$ a {\em Nachbin proximity} provided $\B\R(A,\prec)$ is uniformly dense in $A$. 
        \item Let $\nbal$ be the full subcategory of $\pbal$ consisting of $(A,\prec)$, where $\prec$ is a Nachbin proximity.
    \end{enumerate} 
\end{definition}

Before proceeding, we connect the notion of reflexive and Nachbin proximities to other existent notions of proximity on $\bal$-algebras.

\begin{remark} \label{rem: comparison}
    Proximities on $\bal$-algebras were studied in \cite{BMMO15b,BMO16} in connection with de Vries and Gelfand dualities. 
We briefly detail the connection with our new notion of a Nachbin proximity. 
Let $\prec$ be a binary relation on $A\in\bal$.
\begin{enumerate}
\item We call $\prec$ a {\em basic proximity} if $\prec$ satisfies \ref{P1}--\ref{P9}.
\item We call a basic proximity $\prec$ a {\em de Vries proximity} if it satisfies: 
\begin{enumerate}[label=(P\arabic*)]
\setcounter{enumii}{10}
\item \label{P11} If $a \prec b$, then $-b \prec -a$.
\item \label{P12} If $0 < b$, then there is $0 < a$ with $a \prec b$. 
\end{enumerate}
\item We call $\prec$ a {\em Gelfand proximity} if it is a reflexive de Vries proximity. 
\end{enumerate}

The diagram below details the  inclusion relationships between the existing notions of proximity on a $\bal$-algebra. 

\begin{center}
\begin{tikzcd}
& \textrm{basic}  \arrow[dr, hookleftarrow] \\
\textrm{de Vries} \arrow[ur, hookrightarrow]  && \textrm{reflexive} \\
\textrm{Gelfand}   \arrow[u, hookrightarrow] \arrow[urr, hookrightarrow] &&\textrm{Nachbin} \arrow[u, hookrightarrow]
\end{tikzcd}
\end{center}
\end{remark}

By \cref{BC_le dense in C},
the $\C_\le$-image of $\Nach$ lands in $\nbal$. 
We next define a functor in the other direction.
To do so, for each $(A,\prec)\in\nbal$, we define a partial order on the space $X_A$ of maximal $\ell$-ideals of $A$.  

\begin{definition} \label{def: le on X}
Let $(A, \prec) \in \nbal$. For $y \in X_A$ set
\[
\thd y = \{ a \in A : \exists c \in y \cap \R(A,\prec)^+ \textrm{ with } |a| \le c\},
\]
and define $\le$ on $X_A$ by $$x \le y \iff \thd y \subseteq x.$$
\end{definition}

In \cref{lem: properties of thd} we prove various properties of $\thd$, for which we use the following lemma about $\bal$-algebras, a proof of which  can be found in \cite[Rem.~2.11]{BMO16}.

\begin{lemma} \plabel{lem: properties of order on XA}
Let $A \in \bal$, $a \in A$, and $x \in X_A$.
\begin{enumerate}
\item \label[lem: properties of order on XA]{a plus or a minus in x} 
$a^+ \in x$ or $a^- \in x$.
\item \label[lem: properties of order on XA]{when is a+x positive} $a^+ \notin x$ iff $a + x > 0 + x$.
\item  \label[lem: properties of order on XA]{when is a+x negative} $a^- \notin x$ iff $a + x < 0 + x$.
\end{enumerate}
\end{lemma}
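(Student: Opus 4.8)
The plan is to push everything into the quotient $A/x$ and reduce each assertion to a comparison in $\mathbb{R}$. Recall from the discussion preceding Gelfand duality that for $x \in X_A$ the quotient $A/x$ is an $\ell$-algebra isomorphic to $\mathbb{R}$; let $q_x : A \to A/x$ be the quotient map and write $\bar a = q_x(a) = a + x$. The one structural fact I would invoke is that, $x$ being an $\ell$-ideal, $q_x$ is a surjective homomorphism of $\ell$-algebras and hence preserves finite joins and meets. In particular it preserves positive and negative parts, so that, after identifying $A/x$ with the totally ordered field $\mathbb{R}$,
\[
q_x(a^+) = \bar a \vee 0 = \max(\bar a, 0) \qquad\text{and}\qquad q_x(a^-) = (-\bar a)\vee 0 = \max(-\bar a, 0).
\]

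With this in hand, I would first dispatch (2) and (3), from which (1) drops out. For (2), observe that $a^+ \in x$ means $q_x(a^+) = 0$, i.e.\ $\max(\bar a,0) = 0$, which holds exactly when $\bar a \le 0$; hence $a^+ \notin x$ iff $\bar a > 0$, that is, iff $a + x > 0 + x$. Symmetrically for (3), $a^- \in x$ iff $\max(-\bar a, 0) = 0$ iff $\bar a \ge 0$, so $a^- \notin x$ iff $\bar a < 0$, i.e.\ iff $a + x < 0 + x$. Then (1) is immediate from totality of the order on $\mathbb{R}$: since always $\bar a \le 0$ or $\bar a \ge 0$, the first case gives $a^+ \in x$ and the second gives $a^- \in x$. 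One could equally apply $q_x$ to the identity $a^+ \wedge a^- = 0$, valid in any $\ell$-group, and use that the minimum of two nonnegative reals is zero only if one of them is.

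There is no real obstacle once the two preliminary facts are granted: that $A/x \cong \mathbb{R}$ as an $\ell$-algebra, recorded in the preliminaries, and that $q_x$ is an $\ell$-homomorphism. The only step deserving a word of justification is the preservation of $(\cdot)^+$ and $(\cdot)^-$, which follows since any $\ell$-homomorphism $\varphi$ satisfies $\varphi(a \vee b) = \varphi(a)\vee\varphi(b)$ and $\varphi(0) = 0$. After that, each clause is just a comparison of $\bar a$ with $0$ in $\mathbb{R}$, so the argument is entirely routine.
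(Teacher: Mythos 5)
Your proof is correct: passing to the quotient $A/x \cong \mathbb{R}$ via the $\ell$-homomorphism $q_x$ (which preserves joins, hence positive and negative parts) reduces all three clauses to trivial comparisons in the totally ordered field $\mathbb{R}$, and each reduction is carried out accurately. The paper does not prove this lemma itself but defers to \cite[Rem.~2.11]{BMO16}, and your argument is precisely the standard one used there, so there is no substantive divergence to report.
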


\begin{lemma} \plabel{lem: properties of thd}
Let $(A, \prec) \in \nbal$ and $ x,y \in X_A$.
\begin{enumerate}
\item \label[lem: properties of thd]{thd an ideal} $\thd y$ is an $\ell$-ideal with $\thd y \subseteq y$.
\item \label[lem: properties of thd]{thd restricts to R} $\thd y \cap \R(A, \prec)^+ = y \cap \R(A, \prec)^+$.
\item \label[lem: properties of thd]{op and idempotent} $\thd$ is order preserving and idempotent.
\item \label[lem: properties of thd]{thd is 1-1} $x \ne y$ implies $\thd x \ne \thd y$.
\end{enumerate}
\end{lemma}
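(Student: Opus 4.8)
The plan is to handle the four parts essentially in order, since each builds on its predecessors, and to isolate the injectivity statement \cref{thd is 1-1} as the single place where the Nachbin hypothesis (uniform density of $\B\R(A,\prec)$ in $A$) is genuinely used; parts \cref{thd an ideal}--\cref{op and idempotent} are formal consequences of the closure properties of $\R(A,\prec)$ recorded in \cref{prop: functor R}.

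For \cref{thd an ideal}, I would first note $\thd y \subseteq y$: if $|a| \le c$ with $c \in y \cap \R(A,\prec)^+$, then $c = |c| \in y$, so the $\ell$-ideal property of $y$ forces $a \in y$. To see that $\thd y$ is itself an $\ell$-ideal, I would invoke that $\R(A,\prec)$ is closed under $+$, $\vee$, $\wedge$, positive scalar multiplication, and multiplication on its positive cone (from \ref{P6}, \ref{P7}, \ref{P9}). Closure under addition follows from $|a_1 + a_2| \le c_1 + c_2$ with $c_1 + c_2 \in y \cap \R(A,\prec)^+$; closure under negation is immediate from $|-a| = |a|$; absorption of ring multiplication uses boundedness of $A$ (pick $n \in \mathbb{N}$ with $|b| \le n$, so that $|ab| \le |a|\,|b| \le nc \in y \cap \R(A,\prec)^+$); and the defining $\ell$-ideal condition is immediate by transitivity of $\le$. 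For \cref{thd restricts to R}, the inclusion $\subseteq$ is immediate from $\thd y \subseteq y$, while $\supseteq$ holds because any $c \in y \cap \R(A,\prec)^+$ satisfies $|c| = c \le c$, whence $c \in \thd y$.

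Part \cref{op and idempotent} is then essentially formal. Monotonicity is clear from the definition, since a larger set $y \cap \R(A,\prec)^+$ supplies more admissible bounding elements $c$. Idempotency $\thd\thd y = \thd y$ follows directly from \cref{thd restricts to R}: because $\thd y$ is built solely from $y \cap \R(A,\prec)^+$, and this set equals $\thd y \cap \R(A,\prec)^+$, applying $\thd$ a second time uses exactly the same defining data.

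The crux is \cref{thd is 1-1}, and I expect it to be the main obstacle. Using that $\thd y$ depends only on $y \cap \R(A,\prec)^+$ together with \cref{thd restricts to R}, I would first reduce the claim to showing that distinct $x,y \in X_A$ have distinct positive reflexive parts, i.e.\ $x \cap \R(A,\prec)^+ \ne y \cap \R(A,\prec)^+$. Here the Nachbin hypothesis enters: since $\B\R(A,\prec)$ is uniformly dense in $A$, \cref{prop: equivalent conditions for density} yields that $\B\R(A,\prec)$ separates the points of $X_A$, so there is $b = c - d$ with $c,d \in \R(A,\prec)^+$ whose image under $\phi_A$ differs at $x$ and $y$; consequently $c$ or $d$ already takes different values at $x$ and $y$, say $\phi_A(c)(x) \ne \phi_A(c)(y)$. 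Choosing a real $t$ strictly between these two values and setting $e = (c - t) \vee 0$, I would use $\mathbb{R} \subseteq \R(A,\prec)$ (from \ref{P8}) together with closure of $\R(A,\prec)$ under $+$ and $\vee$ to conclude $e \in \R(A,\prec)^+$, while $\phi_A(e)$ vanishes at exactly one of $x,y$. Since membership in a maximal $\ell$-ideal is detected by vanishing under $\phi_A$, the element $e$ lies in exactly one of $x,y$, giving $x \cap \R(A,\prec)^+ \ne y \cap \R(A,\prec)^+$ and hence $\thd x \ne \thd y$. The delicate point is precisely this last construction: density only provides an arbitrary separating function, and the work lies in upgrading it to a \emph{positive reflexive} separating element, which forces the careful combination of density with the lattice-closure properties of $\R(A,\prec)$.
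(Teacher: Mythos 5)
Your proposal is correct and follows essentially the same route as the paper: parts (1)--(3) are argued identically, and your part (4) uses the same chain of ideas (uniform density of $\B\R(A,\prec)$ gives separation of points of $X_A$, one of the two reflexive elements $c,d$ must itself separate $x$ and $y$, and a truncated positive part of a reflexive element --- reflexive by the paper's remark that $a \prec a$ implies $a^+ \prec a^+$ --- lies in exactly one of the two ideals, so part (2) finishes). The only difference is cosmetic: you truncate at a value $t$ strictly between $\phi_A(c)(x)$ and $\phi_A(c)(y)$, which neatly avoids the paper's two-case analysis comparing $r$ and $s$.
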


\begin{proof}
\ref{thd an ideal} The inclusion $\thd y \subseteq y$ is clear. We show that $\thd y$ is an $\ell$-ideal. Let $a, b \in \thd y$. Then there are $c, d \in y \cap \R(A, \prec)^+$ with $|a| \le c$ and $|b| \le d$, so $|a + b| \le c + d \in y \cap \R(A, \prec)^+$. Thus, $a+b \in \thd y$. Next, let $a \in \thd y$ and $b \in A$. There is $c \in y \cap \R(A, \prec)^+$ with $|a| \le c$. Since $A$ is bounded, $|b| \le n$ for some $n \in \mathbb{N}$. Therefore, $|ab| \le nc \in y \cap \R(A, \prec)^+$, and hence $ab \in \thd y$. This shows that $\thd y$ is an ideal of $A$. To see that it is an $\ell$-ideal, let $a \in A$ and $b \in \thd y$ with $|a| \le |b|$. There is $c \in y \cap \R(A, \prec)^+$ with $|b| \le c$, yielding that $|a| \le c$, so $a \in \thd y$.

\ref{thd restricts to R} Let $a \in \R(A, \prec)^+$. By definition, $a \in y$ iff $a \in \thd y$. The result follows.

\ref{op and idempotent} It is clear that $\thd$ is order preserving. For idempotent, we have $\thd(\thd y) \subseteq \thd y$ by \ref{thd an ideal}. 
For the reverse inclusion, if $a \in \thd y$, there is $c \in y \cap \R(A, \prec)^+$ with $|a| \le c$. By \ref{thd restricts to R}, $c \in \thd y$, and hence $a \in \thd(\thd y)$.

\ref{thd is 1-1} 
Since $(A,\prec)\in\nbal$, $\B\R(A,\prec)$ is uniformly dense in $A$. 
Therefore, $\B\R(A,\prec)$ separates points of $X_A$. So, as $x \ne y$, there are $c, d \in \R(A, \prec)$ with $c-d \in x$ and $c-d \notin y$.  Because $A/x \cong \mathbb{R}$, there is $r \in \mathbb{R}$ with $c-r \in x$. We then have $d-r \in x$ and either $c-r \notin y$ or $d-r \notin y$. 
We only consider the case when $c-r \notin y$ since the case when $d-r \notin y$ is proved similarly. 
If $s \in \mathbb{R}$ with $c -s \in y$, then $r \ne s$. If $r < s$ then $(c-r)^+ \notin y$ since $(c-r) + y > 0 + y$; and if $r > s$ then $(c-s)^+ \notin x$ because $(c-s) + x > 0 + x$. Also, by \cref{rem: proximity diagram}, $c - r \in x$ implies $(c - r)^+ \in x \cap \R(A,\prec)^+$ and $c - s \in y \cap \R(A,\prec)^+$ gives $(c - s)^+ \in \thd y$. Thus, $\thd x \ne \thd y$. 
\end{proof}

\begin{theorem} \label{thm: X_A is a Nachbin space}
If $(A, \prec) \in \nbal$, then $(X_A, \le) \in \Nach$. 
\end{theorem}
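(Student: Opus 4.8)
The plan is to lean on Gelfand duality, which already tells us that $X_A$ is a compact Hausdorff space; so it remains to check that $\le$ is a partial order that is closed in $X_A\times X_A$. The order axioms will come quickly out of \cref{lem: properties of thd}, whereas closedness is the real content and will require passing to the functional representation $\phi_A : A \to C(X_A)$, where $\phi_A(c)(x)$ is the unique real number $r$ with $c-r\in x$.

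First I would dispatch the order axioms. Reflexivity is immediate from $\thd x \subseteq x$ (\cref{thd an ideal}). For transitivity, given $x \le y$ and $y \le z$, i.e.\ $\thd y \subseteq x$ and $\thd z \subseteq y$, I would apply $\thd$ to the second inclusion and use that $\thd$ is monotone and idempotent (\cref{op and idempotent}) to get $\thd z = \thd(\thd z) \subseteq \thd y \subseteq x$, so $x \le z$. For antisymmetry, from $x \le y$ and $y \le x$ I would intersect the inclusions $\thd y \subseteq x$ and $\thd x \subseteq y$ with $\R(A,\prec)^+$ and invoke \cref{thd restricts to R} to conclude $x \cap \R(A,\prec)^+ = y \cap \R(A,\prec)^+$; since $\thd w$ depends only on $w \cap \R(A,\prec)^+$, this gives $\thd x = \thd y$, and \cref{thd is 1-1} then forces $x = y$.

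The heart of the argument is closedness, and for this I would first establish the functional characterization
\[
x \le y \iff \phi_A(c)(x) \le \phi_A(c)(y) \text{ for all } c \in \R(A,\prec).
\]
For the forward implication, given $c \in \R(A,\prec)$ put $r = \phi_A(c)(y)$, so $c - r \in y$; since $c-r$ is reflexive (by \ref{P6} and \ref{P8}), its positive part $(c-r)^+$ is reflexive (\cref{rem: proximity diagram}), and it lies in $y \cap \R(A,\prec)^+ \subseteq \thd y \subseteq x$. As $\phi_A$ is a $\bal$-morphism, $\phi_A((c-r)^+)(x) = \big(\phi_A(c)(x) - r\big)^+ = 0$, i.e.\ $\phi_A(c)(x) \le r = \phi_A(c)(y)$. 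For the converse, if $a \in \thd y$ with witness $c \in y \cap \R(A,\prec)^+$, then $\phi_A(c)(x) \le \phi_A(c)(y) = 0$ together with $c \ge 0$ forces $c \in x$, and since $|a| \le c$ and $x$ is an $\ell$-ideal we get $a \in x$; thus $\thd y \subseteq x$. With the characterization in hand, closedness is formal: $\le = \bigcap_{c \in \R(A,\prec)} \{(x,y) : \phi_A(c)(x) \le \phi_A(c)(y)\}$, and each set in the intersection is the preimage of the closed set $\{(s,t) : s \le t\} \subseteq \mathbb{R}^2$ under the continuous map $(x,y) \mapsto (\phi_A(c)(x), \phi_A(c)(y))$, hence closed.

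I expect the functional characterization, and specifically its forward implication, to be the main obstacle, as it is where the reflexivity of $\prec$ (through \cref{rem: proximity diagram}) and the exact definition of $\thd$ must be combined; the converse and the reduction of closedness to an intersection of closed sets are then routine. As a consistency check, the characterization reproves antisymmetry directly: if $x \le y$ and $y \le x$ then $\phi_A$ agrees at $x$ and $y$ on every element $c-d$ with $c,d\in\R(A,\prec)$, i.e.\ on all of $\B\R(A,\prec)$, which is uniformly dense in $A$ and hence separates points of $X_A$ by \cref{prop: equivalent conditions for density}, forcing $x=y$.
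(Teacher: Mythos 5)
Your proof is correct, but for the substantive half of the theorem it follows a genuinely different route from the paper. The treatment of reflexivity, transitivity, and antisymmetry coincides with the paper's (both rest on \cref{lem: properties of thd}; you simply spell out the details the paper leaves implicit). For closedness, however, the paper does not argue directly with the closure of $\le$ in $X_A \times X_A$: it instead verifies the order-Hausdorff separation axiom, producing from an element $c \in (y \setminus x) \cap \R(A,\prec)^+$ explicit disjoint open sets $U = \{z : (c-s)^+ \notin z\}$ (an upset) and $V = \{z : (c-s)^- \notin z\}$ (a downset), and then invokes the equivalence of this axiom with closedness of the order quoted in the preliminaries from Nachbin. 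You instead prove the functional characterization $x \le y \iff \phi_A(c)(x) \le \phi_A(c)(y)$ for all $c \in \R(A,\prec)$, and obtain closedness formally as an intersection of preimages of the closed half-plane under continuous maps. Your route is more direct relative to the paper's definition of a Nachbin space (it never needs the cited equivalence with order-Hausdorffness), and your forward implication is precisely the computation the paper later performs in the proof of \cref{nbal case} to show that $\phi_A(c) \in C_\le(X_A)$ for reflexive $c$; so your characterization effectively front-loads part of \cref{thm: contravariant adjunction}, and could be cited there to shorten it. What the paper's argument buys in exchange is that it stays entirely inside the ideal-theoretic language (no appeal to the representation $\phi_A$) and yields the explicit separating upset/downset neighborhoods, which is the form of the property one often wants to use; both arguments share the feature, noted in \cref{rem: quasiorder vs partial order}, that the Nachbin (density) hypothesis enters only through antisymmetry, so each shows that $\le$ is a closed quasi-order for an arbitrary $(A,\prec) \in \pbal$.
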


\begin{proof}
It is clear that $\le$ is reflexive, and it follows from \cref{lem: properties of thd} that $\le$ is transitive and anti-symmetric. Thus, $\le$ is a partial order.
It remains to show that $\le$ is a closed order. For this, it is sufficient to show that if $x \not\le y$, then 
$x$ and $y$ can be separated by disjoint open sets $U,V$ such that $U$ is an upset and $V$ is a downset.
From $x \not\le y$ it follows that $\thd y \not\subseteq x$, so there is $c \in \R(A, \prec)^+$ with $c \in y - x$. Then  
$c > 0$ since $c \notin x$. Because $A/x \cong \mathbb R$, there is $0 < r \in \mathbb{R}$ with $c - r \in x$. Choose $s \in \mathbb R$ with $0 < s < r$. 
Then $0+x = (c-r) + x < (c-s) + x$. 
so $(c-s)^+ \notin x$ by \cref{when is a+x positive}. Also, $(c-s) +y < 0 + y$ since $c \in y$ and $0 < s$. Therefore, $(c-s)^- \notin y$ by \cref{when is a+x negative}.  
Set
\[
U = \{ z \in X_A : (c-s)^+ \notin z\} \quad\textrm{and}\quad V = \{ z \in X_A : (c - s)^- \notin z\}. 
\]
Then $U, V$ are open, $x \in U$, $y \in V$, and $U \cap V = \varnothing$ by \cref{a plus or a minus in x}. To see that $U$ is an upset, suppose that $z \in U$ and $z \le z'$. Then $\thd z' \subseteq z$ and $(c-s)^+ \notin z$. Because $(c-s)^+ \in \R(A, \prec)^+$ (see \cref{rem: proximity diagram}),  
we see that $(c-s)^+ \notin z'$, so $z' \in U$.

To see that $V$ is a downset, let $z' \in V$ and $z \le z'$. Then $\thd z' \subseteq z$ and $(c-s)^- \notin z'$. We have $c + z' < s + z'$ by \cref{when is a+x positive}. Since $A/z' \cong \mathbb R$, there is $t \in \mathbb{R}$ with $c + z' = t + z'$. Therefore, $t + z' < s + z'$, so $t < s$. Moreover, $(c-t)^+ \in z' \cap \R(A, \prec)^+$, so $(c-t)^+ \in \thd z' \subseteq z$. This yields $c + z < t + z$ by \cref{when is a+x positive}. Thus, $c + z 
< s + z$ because $t<s$, so $(c-s)^- \notin z$ by \cref{when is a+x negative}. Consequently, $z \in V$, and hence $V$ is a downset.
\end{proof}

\begin{remark} \label{rem: quasiorder vs partial order}
In the above proof, the only time we used that $(A, \prec) \in \nbal$ was to show that $\le$ is a partial order. Thus, if $(A, \prec) \in \pbal$, then $\le$ is a closed quasi-order on $X_A$. 
However, there exist $(A, \prec) \in \pbal$ such that $\le$ is not a partial order. 
For example, let $A = \mathbb{R}^2$  
and define $\prec$ on $A$ by $(a,b) \prec (c,d)$ if there is $r \in \mathbb{R}$ with $a \le r \le c$ and $b \le r \le d$. Then $(A,\prec)\in\pbal$, 
but $\B\R(C(X), \prec) = \{(r,r) : r \in  \mathbb{R}\}$ is not uniformly dense in $A$. We have 
$X_A = \{ x,y \}$, where $x = \mathbb{R} \times \{0\}$ and $y = \{0\} \times \mathbb{R}$. It is straightforward to see that $\thd x = \thd y = \{0\}$, and hence 
$x\le y$ and $y \le x$. Thus, $\le$ is not a partial order.
\end{remark}

We extend the functor $\X : \bal \to \KHaus$ to a functor from $\nbal$ to $\Nach$. For this we need the following lemma. For each $(A,\prec) \in \nbal$, we denote by $\iota_A$ the embedding $\B \R (A,\prec) \hookrightarrow A$ (see \cref{con: BS inside A}).

\begin{lemma} \label{lem: PR to 1 is a bimorphism}
Let $(A, \prec) \in \nbal$. The  map $\iota_A : \B\R(A, \prec) \to (A, \prec)$ is an $\nbal$-morphism whose dual $\X\iota_A : \X(A, \prec) \to \X\B\R(A, \prec)$ is a $\Nach$-isomorphism.
\end{lemma}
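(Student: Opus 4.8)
The plan is to verify the two assertions in turn, writing $B = \B\R(A,\prec)$ for the $\bal$-envelope equipped with the proximity $\prec_B := \prec_{\R(A,\prec)}$ coming from the functor $\B$ of \cref{prop: functor B}, and $R = \R(A,\prec)$ for the common skeleton. Note that $(B,\prec_B)$ is skeletal and $\R(B,\prec_B) = R$ by \cref{lem: ref prox from semibal}, so $(B,\prec_B)\in\nbal$. To see that $\iota_A$ is an $\nbal$-morphism, observe that it is the inclusion of an $\ell$-subalgebra and hence a $\bal$-monomorphism; by \cref{bal morphism being nbal} it preserves proximity as soon as $\iota_A[\R(B,\prec_B)]\subseteq\R(A,\prec)$, and this holds trivially since $\R(B,\prec_B)=R=\R(A,\prec)$ and $\iota_A$ acts as the identity on it.

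For the dual, I would first establish that $\X\iota_A$ is a homeomorphism and then that it is an order-isomorphism. The homeomorphism is immediate from \cref{prop: equivalent conditions for density} applied to the monomorphism $\iota_A\colon B\to A$: since $\prec$ is a Nachbin proximity, $\iota_A[B]=\B\R(A,\prec)$ is uniformly dense in $A$, which is condition (1), so condition (3) gives that $\X\iota_A\colon X_A\to X_B$ is a homeomorphism. As a set map it is $x\mapsto\iota_A^{-1}(x)=x\cap B$. It then remains to compare the orders of \cref{def: le on X}, one on $X_A$ built from $\prec$ and one on $X_B$ built from $\prec_B$. The computational core is the identity
\[
\thd_B(z\cap B)=\thd z\cap B\qquad(z\in X_A),
\]
where $\thd$ and $\thd_B$ are the operators of \cref{def: le on X} for $(A,\prec)$ and $(B,\prec_B)$. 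This holds because $B$ is a sublattice of $A$ (so absolute values and the order agree on $B$), because $R^+\subseteq B$ forces $(z\cap B)\cap R^+=z\cap R^+$, and because both operators test the same condition $|a|\le c$ with $c$ ranging over $z\cap R^+$.

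Granting this identity, I would show $x\le y\iff x\cap B\le y\cap B$ for $x,y\in X_A$. The forward direction is formal: $\thd y\subseteq x$ gives $\thd_B(y\cap B)=\thd y\cap B\subseteq x\cap B$. For the reverse, assume $\thd y\cap B\subseteq x\cap B$ and take $a\in\thd y$, witnessed by some $c\in y\cap R^+$ with $|a|\le c$; then $c\in\thd y\cap B\subseteq x\cap B\subseteq x$, and since $x$ is an $\ell$-ideal with $|a|\le c=|c|$ and $c\in x$, absorption yields $a\in x$, so $\thd y\subseteq x$. I expect this reverse direction to be the main obstacle, as it is the only step that is not purely formal: it passes from the subalgebra $B$ back to all of $A$ and genuinely uses the $\ell$-ideal absorption property together with the fact that the witnessing element $c$ already lies in $R^+\subseteq B$. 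Once the order is both preserved and reflected, $\X\iota_A$ is an order-isomorphism as well as a homeomorphism, hence a $\Nach$-isomorphism, completing the proof.
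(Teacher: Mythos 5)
Your proof is correct and takes essentially the same route as the paper's: the inclusion is a $\pbal$-morphism because $\R(\B\R(A,\prec),\prec_{\R(A,\prec)})=\R(A,\prec)$, the homeomorphism follows from uniform density via \cref{prop: equivalent conditions for density}, and the order comparison comes down to the facts that the witnesses defining $\thd$ lie in $\R(A,\prec)^+\subseteq\B\R(A,\prec)$ and that maximal $\ell$-ideals absorb. The paper merely packages your operator identity $\thd_B(z\cap B)=\thd z\cap B$ as the intermediate characterization $x\le y \iff y\cap\R(A,\prec)^+\subseteq x$; the underlying argument is the same.
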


\begin{proof}
For ease of notation we set $(A_0, \prec_0) = \B\R(A, \prec)$. Note that ${\R(A_0, \prec_0) = \R(A, \prec)}$. Because $A_0$ is a $\bal$-subalgebra of $A$, the map $\iota_A$ is a $\bal$-morphism and, since $\prec_0$ is the restriction of $\prec$ to $A_0$, $\iota_A$ is an $\nbal$-morphism. As $A_0$ is uniformly dense in $A$, the induced map $\X\iota_A : X_A \to X_{A_0}$ is a homeomorphism by \cref{prop: equivalent conditions for density}. It suffices to show that $x \le y$ iff ${\X\iota_A(x) \le \X\iota_A(y)}$ for each $x, y \in X_A$. 
We have $\X\iota_A(x) = \iota_A^{-1}(x) = x \cap \B\R(A, \prec)$. Moreover, given $x, y \in X_A$, we have $x \le y$ iff $\thd y \subseteq x$, which is equivalent to $y \cap \R(A, \prec)^+ \subseteq x$.  The reflexive elements of $(A, \prec)$ and $\B\R(A, \prec)$ are the same. Therefore, 
\begin{align*}
x \le y &\iff y \cap \R(A, \prec)^+  \subseteq x \\
&\iff \left(y \cap \B\R(A, \prec)\right) \cap \R(A, \prec)^+ \subseteq x \cap \B\R(A, \prec) \\
&\iff \iota_A^{-1}(x) \le \iota_A^{-1}(y).\qedhere
\end{align*}
\end{proof}

\begin{proposition} \label{prop: functor X}
There is a functor $\X : \nbal \to \Nach$ such that $\R\circ\C = \C_\le$ and $\Y\circ\R$ is naturally equivalent to $\X$,
where $\Y$ is the composition $\X\B$.
\[
\begin{tikzcd}
\nbal \arrow[rr, "\R"] \arrow[dr, shift left = .5ex, "\X"] && \sbal \arrow[dl, shift right = .5ex, "\Y"']\\
& \Nach \arrow[ur, shift right = .5ex, "\C_\le"'] \arrow[ul, shift left = .5ex, "\C"] &
\end{tikzcd}
\]
\end{proposition}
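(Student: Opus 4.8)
The plan is to extend the Gelfand functor $\X:\bal\to\KHaus$ by equipping the spectrum with its order and checking that the same assignments work at the level of morphisms. On objects I set $\X(A,\prec)=(X_A,\le)$ with $\le$ as in \cref{def: le on X}; this lands in $\Nach$ by \cref{thm: X_A is a Nachbin space}. On an $\nbal$-morphism $\alpha:(A,\prec)\to(B,\prec)$ I take $\X\alpha=\alpha^{-1}:X_B\to X_A$, exactly the Gelfand dual of the underlying $\bal$-morphism. Continuity of $\X\alpha$ and the functor identities $\X(\beta\alpha)=\X\alpha\circ\X\beta$ and $\X(\mathrm{id})=\mathrm{id}$ are inherited verbatim from Gelfand duality, since neither the underlying map nor its assignment changes. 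Hence the only genuinely new point needed for well-definedness is that $\X\alpha$ is order preserving, i.e.\ that $\alpha^{-1}$ is a $\Nach$-morphism.

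To verify this I would first record the reformulation of the order already used in \cref{lem: PR to 1 is a bimorphism}: for $x,y\in X_A$ one has $x\le y$ iff $\thd y\subseteq x$ iff $y\cap\R(A,\prec)^+\subseteq x$. The last equivalence holds because $y\cap\R(A,\prec)^+\subseteq\thd y$ (any $c\in y\cap\R(A,\prec)^+$ satisfies $|c|\le c$), while conversely $y\cap\R(A,\prec)^+\subseteq x$ forces $\thd y\subseteq x$ since $x$ is an $\ell$-ideal and each $a\in\thd y$ obeys $|a|\le c$ for some $c\in y\cap\R(A,\prec)^+\subseteq x$. Now assume $x\le y$ in $X_B$ and take $c\in\alpha^{-1}(y)\cap\R(A,\prec)^+$. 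Then $\alpha(c)\in y$, while $\alpha(c)\ge 0$ because $\alpha$ preserves order and $\alpha(c)\in\R(B,\prec)$ because $\alpha$ preserves proximity (equivalently $\alpha[\R(A,\prec)]\subseteq\R(B,\prec)$ by \cref{bal morphism being nbal}). Thus $\alpha(c)\in y\cap\R(B,\prec)^+\subseteq x$, so $c\in\alpha^{-1}(x)$. This gives $\alpha^{-1}(y)\cap\R(A,\prec)^+\subseteq\alpha^{-1}(x)$, i.e.\ $\alpha^{-1}(x)\le\alpha^{-1}(y)$, so $\X\alpha$ is order preserving and $\X:\nbal\to\Nach$ is a well-defined (contravariant) functor.

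The identity $\R\circ\C=\C_\le$ is already established in \cref{prop: C functor}. For the remaining assertion, recall $\Y=\X\B$, so $\Y\R=\X\B\R$. The inclusions $\iota_A:\B\R(A,\prec)\hookrightarrow A$ assemble into a natural transformation $\iota:\B\R\Rightarrow 1_{\nbal}$: for an $\nbal$-morphism $\alpha$ the naturality square relating $\iota_A$, $\iota_B$, and $\B\R\alpha$ commutes because, under \cref{con: BS inside A} and \cref{rem: B is a functor}, both composites are the restriction of $\alpha$ to $\B\R(A,\prec)$. Applying the contravariant functor $\X$ turns $\iota$ into a natural transformation $\X\iota:\X\Rightarrow\X\B\R=\Y\R$, and by \cref{lem: PR to 1 is a bimorphism} each component $\X\iota_A$ is a $\Nach$-isomorphism; hence $\X\iota$ is the desired natural equivalence $\X\cong\Y\R$. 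I expect the order-preservation step of the second paragraph to be the only real obstacle, as it is where the proximity-preservation hypothesis on $\nbal$-morphisms is genuinely used; the final naturality argument is formal once $\iota$ is seen to be natural and \cref{lem: PR to 1 is a bimorphism} supplies the componentwise isomorphisms.
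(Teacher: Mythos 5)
Your proposal is correct and follows essentially the same route as the paper: the same object/morphism assignments with continuity and functoriality inherited from Gelfand duality, order-preservation of $\alpha^{-1}$ via transporting positive reflexive elements (the paper phrases this directly through $\thd$ and $|\alpha(a)|=\alpha(|a|)$, you via the equivalent reformulation $x\le y\iff y\cap\R(A,\prec)^+\subseteq x$ already recorded in \cref{lem: PR to 1 is a bimorphism}), and the natural equivalence $\X\cong\Y\R$ obtained from the components $\X\iota_A$ with naturality reduced to $\iota_B\circ\B\R\alpha=\alpha\circ\iota_A$. The only difference is presentational: you package the inclusions as a natural transformation $\iota:\B\R\Rightarrow 1_{\nbal}$ and apply $\X$, whereas the paper checks the same square componentwise using the uniqueness of extensions from \cref{thm: bal is reflective}.
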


\begin{proof}
With each $(A, \prec) \in \nbal$ we associate $(X_A,\le)$ and with each  
$\nbal$-morphism $\alpha : A \to B$ we associate $\varphi := \alpha^{-1}$. That $(X_A, \le) \in \Nach$ follows from \cref{thm: X_A is a Nachbin space}, and that $\varphi$ is continuous follows from Gelfand duality (see \cref{sec: preliminaries}).
We show that $\varphi$ is order preserving. 
Let $x, y \in X_B$ with $x \le y$, so $\thd y \subseteq x$. If $a \in \thd \varphi(y)$, then there is $0 \le c \in \varphi(y) \cap \R(A, \prec)^+$ with $|a| \le c$. Since $|\alpha(a)| = \alpha(a) \vee \alpha(-a) = \alpha(a \vee -a) = \alpha(|a|)$, we have
$$|\alpha(a)| = \alpha(|a|) \le \alpha(c) \in \alpha[\R(A,\prec)] \subseteq \R(B, \prec),$$ so $\alpha(a) \in x$, and hence $a \in \varphi(x)$. Therefore, $\varphi(x) \le \varphi(y)$, and so $\varphi$ is a $\Nach$-morphism. Thus, $\X$ is well defined on both objects and morphisms, and it is straightforward to see that 
$\X$ preserves composition and identities. 

We have $\R\circ\C = \C_\le$ by \cref{prop: C functor}. 
To see that $\Y\circ\R$ is naturally equivalent to $\X$, let $(A, \prec) \in \nbal$. Then $\X\iota_A : \X(A, \prec) \to \X\B\R(A, \prec)$ is a $\Nach$-isomorphism by \cref{lem: PR to 1 is a bimorphism}. For naturality, it is sufficient to show that the diagram below commutes, where
$\alpha : (A, \prec) \to (B, \prec)$ is an $\nbal$-morphism.
\[
\begin{tikzcd}
\X(B, \prec) \arrow[r, "\X(\iota_B)"] \arrow[d, "\X(\alpha)"']& \Y\R(A, \prec) \arrow[d, "\Y\R(\alpha)"] \\
\X(A, \prec) \arrow[r, "\X(\iota_A)"'] & \Y\R(A, \prec)
\end{tikzcd}
\]
We have $\Y\R\alpha \circ \X\iota_B = \X(\iota_B \circ \B\R\alpha)$ and $\X\iota_A \circ \X\alpha = \X(\alpha\circ \iota_A)$. It then suffices to show that $\iota_B \circ \B\R\alpha = \alpha \circ \iota_A$, which follows from \cref{thm: bal is reflective} since both $\alpha \circ \iota_A$ and $\iota_B \circ \B\R\alpha$ extend $\alpha|_{\R(A, \prec)} : \R(A, \prec) \to B$. 
\[
\begin{tikzcd}
\B\R(A, \prec) \arrow[r, "\iota_A"] \arrow[d, "\B\R\alpha"'] & A \arrow[d, "\alpha"] \\
\B\R(B, \prec) \arrow[r, "\iota_B"'] & B
\end{tikzcd}
\]
\end{proof}

We finish the section by showing that the functors $\X,\C$ and $\Y,\C_\le$ form a pair of contravariant adjunctions. 

\begin{theorem}\ \plabel{thm: contravariant adjunction}
\begin{enumerate}
\item \label[thm: contravariant adjunction]{nbal case} The functors $\C : \Nach \to \nbal$ and $\X : \nbal \to \Nach$ form a contravariant adjunction with the units $\phi : 1_{\nbal} \to \C\X$ and $\eta : 1_\Nach \to \X\C$ such that $\eta$ is a natural isomorphism.
\item \label[thm: contravariant adjunction]{sbal case} The functors $\C_\le : \Nach \to \sbal$ and $\Y : \sbal \to \Nach$ form a contravariant adjunction with the units $\psi : 1_{\sbal} \to \C_\le\Y$ and $\zeta : 1_\Nach \to \Y\C_\le$ such that $\zeta$ is a natural isomorphism.
\end{enumerate}
\end{theorem}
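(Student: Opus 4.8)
The plan is to prove both contravariant adjunctions by leveraging the machinery already established, rather than verifying the triangle identities by hand. For part \ref{nbal case}, the most efficient route is to build $\C:\Nach\to\nbal$ and $\X:\nbal\to\Nach$ on top of the classical Gelfand adjunction between $\KHaus$ and $\bal$ recalled in \cref{sec: preliminaries}. Concretely, I would define the units by reusing the Gelfand units: for $X\in\Nach$, take $\eta_X:X\to X_{C(X)}$ to be the Gelfand homeomorphism $\eta_X(x)=M_x$, and for $(A,\prec)\in\nbal$, take $\phi_A:A\to C(X_A)$ to be the Gelfand map $\phi_A(a)(x)=r$ where $a+x=r+x$. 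The work is then to check that these maps respect the extra order/proximity structure: that $\eta_X$ is an order-isomorphism onto $(X_{C(X)},\le)$, and that $\phi_A$ is a $\pbal$-morphism landing in $(C(X_A),\prec_{X_A})$, i.e. that $a\prec b$ in $A$ implies $\phi_A(a)\prec_{X_A}\phi_A(b)$ in $C(X_A)$.

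For the naturality and the adjunction bijection, I would establish a natural bijection
\[
\theta:\hom_{\nbal}(A,\C X)\to\hom_{\Nach}(X,\X A),\qquad \theta(\alpha)=\eta_X\circ\X\alpha,
\]
obtained by restricting the Gelfand bijection $\theta:\hom_{\bal}(A,\C X)\to\hom_{\KHaus}(X,\X A)$. The point is to verify that $\theta$ and its inverse preserve the relevant subclasses of morphisms: an underlying $\bal$-morphism $\alpha:A\to C(X)$ is a $\pbal$-morphism exactly when the dual continuous map $\X\alpha$ is order-preserving, so that $\theta$ restricts to a bijection on the order-respecting morphisms. Since the underlying Gelfand bijection is already natural, naturality of the restricted $\theta$ is inherited, and that $\eta$ is a natural isomorphism follows because $\eta_X$ is a homeomorphism (from Gelfand) which we will have shown is also an order-isomorphism.

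Part \ref{sbal case} I would deduce from part \ref{nbal case} together with the equivalences already in hand. By \cref{prop: functor X} we have $\R\circ\C=\C_\le$ and $\Y\circ\R$ naturally equivalent to $\X$, and by \cref{thm: sbal = bnbal} the functors $\B\dashv\R$ restrict to an equivalence between $\sbal$ and the skeletal subcategory $\spbal$. Composing the adjunction of part \ref{nbal case} with this equivalence, and using $\Y=\X\B$, transports the units: I would define $\psi:1_{\sbal}\to\C_\le\Y$ as the $\R$-image of $\phi$ (using $\R\C=\C_\le$) and $\zeta:1_\Nach\to\Y\C_\le$ via $\zeta=\X\iota\circ\eta$ using the isomorphisms $\X\iota_A$ of \cref{lem: PR to 1 is a bimorphism}, so that $\zeta$ remains a natural isomorphism since both $\eta$ and $\X\iota$ are. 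The required bijection $\hom_{\sbal}(S,\C_\le X)\cong\hom_{\Nach}(X,\Y S)$ then follows by combining $\theta$ with the adjunction bijection of \cref{thm: sbal = bnbal} and identifying $\C_\le X=\R\C X$ and $\Y S=\X\B S$.

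The main obstacle I anticipate is the order-theoretic bookkeeping in part \ref{nbal case}: showing precisely that an underlying $\bal$-morphism into $C(X)$ preserves proximity iff its Gelfand dual is order-preserving, and that $\eta_X$ is an order-isomorphism for the order $\le$ on $X_{C(X)}$ defined via $\thd$ in \cref{def: le on X}. This requires unwinding the definition of $\thd$ on $C(X)$ and matching it with the original order on $X$; the key input is that the reflexive elements of $(C(X),\prec_X)$ are exactly $C_\le(X)$ (\cref{lem: ref prox from semibal}), which characterizes $\thd$ in terms of order-preserving functions and lets one recover $x\le y$ in $X$ from $\thd y\subseteq M_x$. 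Once this identification is in place, everything else is a routine transport of the Gelfand adjunction through the added structure.
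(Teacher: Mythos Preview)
Your proposal is correct and follows essentially the same approach as the paper: in part~\ref{nbal case} the paper likewise reuses the Gelfand units $\phi_A$ and $\eta_X$, verifies that $\phi_A$ preserves proximity (by showing $\phi_A(c)\in C_\le(X_A)$ for reflexive $c$) and that $\eta_X$ is an order-isomorphism (by unwinding $\thd$ on $C(X)$ exactly as you describe), and in part~\ref{sbal case} it defines $\psi_S=\phi_{\B S}\circ\varepsilon_S$ and $\zeta_X=\X\iota_{C(X)}\circ\eta_X$ and obtains the hom-set bijection by composing $\theta$ with the $\B\dashv\R$ bijection of \cref{thm: sbal = bnbal}. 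The only cosmetic difference is that for the bijection in part~\ref{nbal case} the paper does not isolate your biconditional ``$\alpha$ is a $\pbal$-morphism iff $\X\alpha$ is order-preserving'' but instead appeals directly to the functoriality of $\X$ and $\C$ established in \cref{prop: C functor,prop: functor X} together with the fact that $\phi_A$ and $\eta_X$ are morphisms in the enriched categories; your biconditional is true and follows from the same ingredients, so the two arguments are interchangeable.
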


\begin{proof}
\ref{nbal case} By Gelfand duality, $\X : \bal \to \KHaus$ and $\C : \KHaus \to \bal$ form a contravariant adjunction with the units $\phi : 1_{\bal} \to \C\X$ and $\eta : 1_\KHaus \to \X\C$ such that $\eta$ is a natural isomorphism. We show that the units lift to the units $\phi : 1_{\nbal} \to \C\X$ and $\eta : 1_\Nach \to \X\C$ yielding a contravariant adjunction $\X : \nbal \to \Nach$ and $\C : \Nach \to \nbal$ such that $\eta$ is a natural isomorphism. For this, it suffices to show that $\phi_A$ is an $\nbal$-morphism for each $(A, \prec) \in \nbal$ and that $\eta_X$ is a $\Nach$-isomorphism for each $(X, \le) \in \Nach$. 

Let $(A, \prec) \in \nbal$. To see that $\phi_A$ is an $\nbal$-morphism, it suffices to show that it preserves proximity. Let $a, b \in A$ with $a \prec b$. Then there is $c \in \R(A, \prec)$ with $a \le c \le b$. To show that $\phi_A(a) \prec_X \phi_A(b)$, it suffices to show that $\phi_A(c) \in C_\le(X_A)$. Let $x, y \in X_A$ with $x \le y$. Then $\thd y \subseteq x$. Set $r = \phi_A(c)(y)$. Then $c-r \in y$, so $(c-r)^+ \in \thd y$ since $(c-r)^+$ is reflexive (see \cref{rem: proximity diagram}), which yields $(c-r)^+ \in x$. Therefore, $(\phi_A(c)-r)^+(x) = 0$. 
This yields $(\phi_A(c)-r)(x) \le 0$, so $\phi_A(c)(x) \le r$, and thus $\phi_A(c) \in C_\le(X_A)$.

Let $(X, \le) \in \Nach$. To see that $\eta_X$ is a $\Nach$-isomorphism, suppose that $x \le y$. If $f \in \thd M_y$, then there is $c \in C_\le(X)$ with $|f| \le c$ and $c(y) =0$. Since $c(x) \le c(y)$, $$0 \le |f(x)| \le c(x) \le c(y) = 0,$$ so $f(x) = 0$, and hence $f \in M_x$. This shows that $\thd M_y \subseteq M_x$. Conversely, suppose that ${\thd M_y \subseteq M_x}$. Since $X$ is a Nachbin space, if $x \not\le y$, there is $c \in C_\le(X)$ with $c(x) > c(y)$ (see the proof of \cref{BC_le dense in C}). Letting $r = c(y)$, we have $(c-r)(y) = 0$, so $(c - r)^+(y) = 0$. Therefore, $(c - r)^+ \in \thd M_y$, and hence $(c - r)^+ \in M_x$. This implies that $(c - r)^+(x) = 0$, so $c(x) \le r$. The obtained contradiction shows that $x \le y$. Thus, $\eta_X$ is a $\Nach$-isomorphism.

Consequently, the natural bijection $\theta : \hom_{\bal}(A, \C X) \to \hom_{\KHaus}(X, \X A)$ lifts to the natural bijection $\theta : \hom_{\nbal}(A, \C X) \to \hom_{\Nach}(X, \X A)$. 

\ref{sbal case} Define $\psi : 1_{\sbal} \to \C_\le\Y$ by $\psi_S = \phi_{\B S} \circ \varepsilon_S$ for each $S \in \sbal$. By \ref{nbal case} and \cref{varepsilon}, each $\psi_S$ is well defined.
Also, define  $\zeta : 1_{\Nach} \to \Y\C_\le$ by $\zeta_X = \X\iota_{C(X)}\circ \eta_X$ for each $X \in \Nach$. By \ref{nbal case} and \cref{lem: PR to 1 is a bimorphism}, each $\zeta_X$ is a well-defined $\Nach$-isomorphism.
For each $S \in \sbal$ and $X \in \Nach$, consider the following diagram. 

\[
\begin{tikzcd}
\hom_{\nbal}(\B S, \C X) \arrow[r, "\theta"] & \hom_{\Nach}(X, \X \B S) \arrow[d, equal] \\
\hom_{\sbal}(S, \C_\le X) \arrow[u, "\nu"] \arrow[r, "\tau"'] & \hom_{\Nach}(X, \Y S)
\end{tikzcd}
\]
Since $\Y = \X\B$, we see that $\hom_{\Nach}(X, \X \B S) = \hom_{\Nach}(X, \Y S)$. Moreover, $\theta$ is a bijection by \ref{nbal case}. Furthermore, $\nu : \hom_{\sbal}(S, \C_\le X) \to \hom_{\nbal}(\B S, \C X)$ is defined by $\nu(\alpha) = \iota_{C(X)} \circ \B\alpha$ for each $\alpha \in \hom_{\sbal}(S, \C_\le X)$, where $\iota_{C(X)} : \B C_\le(X) \to C(X)$ is the embedding. Finally, $\tau : \hom_{\sbal}(S, \C_\le X) \to \hom_{\Nach}(X, \Y S)$ is given by $\tau = \theta\circ \nu$. 

We show that $\nu$ is a bijection. To see that $\nu$ is one-to-one, let $\alpha, \beta : S \to C_\le(X)$ be $\sbal$-morphisms with $\nu(\alpha) = \nu(\beta)$. Then $\B\alpha = \B\beta$ since $\iota_{C(X)}$ is one-to-one. Thus, $\alpha = \beta$ by \cref{thm: sbal = bnbal}.
To show that $\nu$ is onto, let $\sigma : \B S \to \C X$ be an $\nbal$-morphism. Then the restriction $\alpha : S \to C_\le(X)$ is a well-defined $\sbal$-morphism, so $\B\alpha : \B S \to \B\C_\le X$ is an $\nbal$-morphism. Both $\sigma$ and $\iota_{C(X)} \circ \B\alpha$ extend $\alpha$, hence they are equal. Therefore, $\sigma = \nu(\alpha)$. Thus, $\nu$ is a bijection, yielding that so is $\tau$. Naturality of $\nu$ is straightforward, which together with the naturality of $\theta$ yields the naturality of $\tau$. Consequently, $\C_\le : \Nach \to \sbal$ and $\Y : \sbal \to \Nach$ form a contravariant adjunction with the units $\psi : 1_{\nbal} \to \C_\le\Y$ and $\zeta : 1_{\Nach} \to \Y\C_\le$ such that $\zeta$ is a natural isomorphism.
\end{proof}

While $\phi$ is not a natural isomorphism, in the next section we will identify a full subcategory of $\nbal$ for which it is a natural isomorphism, thus extending Gelfand duality from $\KHaus$ to $\Nach$. This requires an appropriate generalization of the Stone-Weierstrass theorem and Dieudonn\'{e}'s lemma, which we next turn to.

\section{Lifting Gelfand duality to the category of Nachbin spaces}\label{sec: duality}

In this section we prove the main results of this paper. We introduce the full subcategories $\unbal$ of $\nbal$ and $\usbal$ of $\sbal$ and prove that the contravariant adjunctions of \cref{thm: contravariant adjunction} restrict to dual equivalences between $\Nach$ and $\unbal$ and between $\Nach$ and $\usbal$. As a consequence, we obtain that $\unbal$ is equivalent to $\usbal$. 
Our main machinery is an appropriate generalization of two classic results to Nachbin spaces: the Stone-Weierstrass theorem and Dieudonn\'{e}'s lemma. In addition, we introduce a full subcategory $\uspbal$ of $\spbal$ and show that it is also dually equivalent to $\Nach$. We conclude the section with a diagram detailing all the equivalences and dual equivalences established in this paper.

We start with a generalization of the Stone-Weierstrass theorem to Nachbin spaces. A similar result was proved in \cite[Thm.~4.3]{DH18} using an analytic argument. We give a more lattice-theoretic proof below, which is based on \cite{BMO20d} (see 
Lemmas 3, 13, and 15).

\begin{theorem}[Stone-Weierstrass for $\Nach$] \label{prop: SW for sbal}
 Let $(A, \prec) \in \nbal$. Then $\phi_A[\R(A, \prec)]$ is uniformly dense in $C_\le(X_A)$.
\end{theorem}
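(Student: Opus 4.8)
The plan is to deduce density from the lattice-theoretic form of the Stone--Weierstrass theorem. First I would record the structural facts about $L := \phi_A[\R(A,\prec)]$. It is a sublattice of $C(X_A)$: indeed $\R(A,\prec)$ is closed under $\vee$ and $\wedge$ by \ref{P3} and \ref{P4}, and $\phi_A$ is a lattice homomorphism. It contains all constants since $r \prec r$ for $r \in \mathbb{R}$ by \ref{P8}. Finally $L \subseteq C_\le(X_A)$, because the $\phi_A$-image of a reflexive element is order preserving (as established in the proof of \cref{thm: contravariant adjunction}). The classical sublattice argument then reduces the theorem to the following two-point interpolation statement: for every $f \in C_\le(X_A)$ and every pair $x,y \in X_A$ there is $h \in L$ with $h(x) = f(x)$ and $h(y) = f(y)$.

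The interpolation statement is the heart of the matter, and I expect it to be the main obstacle. I would split it into cases according to the order relation between $x$ and $y$, exploiting the separation property extracted from the proof of \cref{thm: X_A is a Nachbin space}: whenever $u \not\le v$ in $X_A$ there is $c \in \R(A,\prec)^+$ with $c \in v$ and $c \notin u$, so that $\phi_A(c)(v) = 0$ while $\phi_A(c)(u) > 0$. Normalizing by the positive scalar $\phi_A(c)(u)^{-1}$, which is legitimate by \ref{P9}, produces a reflexive element whose $\phi_A$-image is $0$ at $v$ and $1$ at $u$. If $x = y$, a constant interpolates. If $x$ and $y$ are incomparable, I obtain reflexive $d_1,d_2$ with $\phi_A(d_1)(x)=1=\phi_A(d_2)(y)$ and $\phi_A(d_1)(y)=0=\phi_A(d_2)(x)$, choose $N \in \mathbb{R}$ large enough that $f(x)+N, f(y)+N \ge 0$, and set $c = (f(x)+N)\,d_1 + (f(y)+N)\,d_2 - N$; this lies in $\R(A,\prec)$ by \ref{P6}, \ref{P8}, \ref{P9} and its $\phi_A$-image takes the values $f(x)$ and $f(y)$. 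The delicate case is $x \le y$ with $x \ne y$: here $f(x) \le f(y)$ since $f$ is order preserving, and antisymmetry of the order on $X_A$ gives $y \not\le x$, yielding a reflexive $d$ with $\phi_A(d)(x)=0$ and $\phi_A(d)(y)=1$; then $c = f(x) + (f(y)-f(x))\,d$ is reflexive (using $f(y)-f(x) \ge 0$) and interpolates. The point requiring care is that in the comparable case one cannot separate in the wrong direction, so the interpolant must be built monotonically, which is exactly why the constraint $f(x)\le f(y)$ is both available and necessary.

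With interpolation in hand, I would run the usual compactness argument to finish. Fix $f \in C_\le(X_A)$ and $\varepsilon > 0$. For each pair $(x,y)$ take $h_{xy} \in L$ interpolating $f$ at $x$ and $y$. Fixing $x$, the open sets $\{z : h_{xy}(z) < f(z)+\varepsilon\}$ cover $X_A$; extracting a finite subcover and taking the meet of the corresponding $h_{xy}$ yields $g_x \in L$ with $g_x < f+\varepsilon$ everywhere and $g_x(x) = f(x)$. The open sets $\{z : g_x(z) > f(z)-\varepsilon\}$ then cover $X_A$; a finite join $g = \bigvee_j g_{x_j} \in L$ satisfies $f-\varepsilon < g < f+\varepsilon$, so $\|g-f\| \le \varepsilon$. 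All meets and joins stay in the sublattice $L \subseteq C_\le(X_A)$. Hence every $f \in C_\le(X_A)$ is a uniform limit of elements of $\phi_A[\R(A,\prec)]$, which is the asserted density.
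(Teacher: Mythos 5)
Your proof is correct, and it takes a genuinely different route from the paper's. You reduce the theorem to the classical Kakutani--Krein two-point interpolation criterion: after checking that $L=\phi_A[\R(A,\prec)]$ is a sublattice of $C_\le(X_A)$ containing the constants (via \ref{P3}, \ref{P4}, \ref{P8}), you build interpolants by a case analysis on the order relation between the two points, using the separation fact from the proof of \cref{thm: X_A is a Nachbin space} (if $u\not\le v$ there is $c\in\R(A,\prec)^+\cap v$ with $c\notin u$) together with \ref{P6}, \ref{P8}, \ref{P9} to keep the combinations reflexive, and you finish with the standard two-stage meet-then-join compactness argument. The paper instead follows the Kat\v{e}tov--Tong-style machinery of \cite{BMO20d}: for each value $r$ in the range of $f$ and each point $y$ outside the closed upset $F_r=f^{-1}[r,\infty)$ it constructs (by a covering-and-join argument over $F_r$) a reflexive function equal to $s=\sup f$ on $F_r$ and to $r$ at $y$, shows $f$ is the pointwise infimum of this family of upper bounds, and concludes with a single meet-stage compactness argument. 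The two proofs hinge on the same separation lemma, but their architecture differs: yours handles incomparable pairs explicitly (needing the two-function combination $(f(x)+N)d_1+(f(y)+N)d_2-N$) and yields a two-sided approximation $f-\varepsilon<g<f+\varepsilon$, whereas the paper's level-set construction sidesteps two-point interpolation entirely--the upset structure of $F_r$ guarantees the needed non-relations--and produces the slightly stronger one-sided conclusion $f\le a\le f+\varepsilon$, i.e.\ approximation by reflexive elements from above. Your observation that interpolation in the comparable case is only possible because $f$ is order preserving (so $f(x)\le f(y)$ when $x\le y$) correctly identifies where the hypothesis $f\in C_\le(X_A)$ enters; in the paper's proof the same hypothesis enters instead through $F_r$ being an upset.
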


\begin{proof}
For simplicity, we identify $A$ with $\phi_A[A] \subseteq C(X_A)$.
Let $f \in C_\le(X_A)$. 
If $f$ is constant, then $f \in \R(A, \prec)$, so we assume that $f$ is not constant.
Since $X_A$ is compact, $f$ is bounded. We let $s = \sup f[X]$, ${t = \inf f[X]}$, and observe that $f$ attains these values.   
Moreover, since $f$ is not constant, $t<s$. For each $r$ with $t < r \le s$ set $F_r = f^{-1}[r, \infty)$. Then $F_r$ is a proper nonempty closed upset of $X_A$.
We produce $a_{r, y} \in \R(A, \prec)$ for each 
$y \notin F_r$ satisfying
\begin{itemize}
    \item $r \le a_{r,y} \le s$.
    \item $a_{r,y}(y) = r$.
    \item $a_{r, y}(x) = s$ for each $x \in F_r$.
\end{itemize}

Fix $r$ and $y \notin F_r$. For each $x \in F_r$, we have $x \not\le y$ because $F_r$ is an upset of $X_A$. 
Therefore, $\thd y \not\subseteq x$. If $a \in \thd y \setminus x$, then there is $c \in y \cap \R(A, \prec)^+$ with $|a| \le c$. Consequently, $c \notin x$, so $c(x) > 0$ and $c(y) = 0$. Multiplying $c$ by $c(x)^{-1}$ yields $c_x \in \R(A, \prec)^+$ with $c_x(x) = 1$ and $c_x(y) = 0$. 
This gives that $\{ c_x^{-1}(1/2, \infty) : x \in F_r\}$ is an open cover of $F_r$, so compactness of $X_A$ implies that there are $x_1, \dots, x_n \in F_r$ with $F_r \subseteq c_{x_1}^{-1}(1/2, \infty) \cup \dots \cup c_{x_n}^{-1}(1/2, \infty)$. Set $b_y = 2(c_{x_1} \vee \dots \vee c_{x_n})$. Then $b_y \ge 1$ on $F_r$ and $b_y(y) = 0$. Finally, set $a_{r, y} = r + (s - r)(b_y \wedge 1)$. The $a_{r, y}$ satisfy the properties above. 

We next show that $f(x) = \inf\{ a_{r,y}(x) : t < r \le s, y \notin F_r\}$ for each $x \in X_A$. Let $x \in X_A$. To see that $f(x) \le a_{r, y}(x)$, if $x \in F_r$, then $f(x) \le s = a_{r, y}(x)$. If $x \notin F_r$, then $f(x) < r \le a_{r, y}(x)$. Thus, $f(x) \le a_{r, y}(x)$ for each $r, y$.
Let $\varepsilon > 0$  
and set $r=f(x)$. If $r = s$, then $f(x) = a_{s,y}(x)$ for any $y \notin F_s$. If $r < s$, set $r' = \min\{r+\varepsilon, s\}$, so $x \notin F_{r'}$, and hence $a_{r', y}(x) = r'$ for any $y \notin F_{r'}$. 
Therefore, $f(x) = \inf\{ a_{r,y}(x) : t < r \le s, y \notin F_r\}$.

To finish the argument, let $\varepsilon > 0$. By the above, there is $S \subseteq \R(A, \prec)$ with $f(x) = \inf \{ a(x) : a \in S\}$ for each $x \in X_A$. Therefore, 
there is $a_x \in S$ with $a_x(x) < f(x) + \varepsilon$. Let $U_x = (f+\varepsilon - a_x)^{-1}(0, \infty)$, an open neighborhood of $x$. Compactness yields $x_1, \dots, x_n \in X_A$ with $X_A = U_{x_1} \cup \dots \cup U_{x_n}$. Set $a = a_{x_1} \wedge \dots \wedge a_{x_n}$ and observe that $a \in \R(A,\prec)$ because each $a_{x_i} \in \R(A,\prec)$. 
We have $f(x) \le a(x)$ since $f(x) \le a_{x_i}(x)$ for each $i$. Also, 
$x \in U_{x_i}$ for some $i$, so $a(x) \le a_{x_i}(x) < f(x) + \varepsilon$. Thus, $|(f-a)|(x) = |f(x) - a(x)| \le \varepsilon$ for all $x \in X_A$, and so $\|f-a\| \le \varepsilon$ (recall \cref{eqn: norm}).
\end{proof}

We next obtain a version of Dieudonn\'{e}'s lemma for $\nbal$-algebras. Our proof follows along the lines of \cite[Lem.~5]{BMO20d}. 

\begin{theorem}[Dieudonn\'{e} lemma for $\nbal$] \label{prop: Dieudonne}
Suppose $(A, \prec) \in \nbal$ and identify $(A,\prec)$ with its image in $(C(X_A),\prec_{X_A})$. Let $S$ be the closure of $\R(A, \prec)$ in $C(X_A)$, and $\prec'$ the closure of $\prec$ in $C(X_A) \times C(X_A)$. 
Then $S\in\sbal$ and ${\prec'} = {\prec_S}$. 
\end{theorem}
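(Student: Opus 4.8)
The plan is to first pin down $S$ explicitly and then establish ${\prec'}={\prec_S}$ by proving the two inclusions separately. For the first part, note that after identifying $A$ with $\phi_A[A]$, the reflexive elements of $(A,\prec)$ land in $C_\le(X_A)$ (this is exactly what was shown when verifying that $\phi_A$ is an $\nbal$-morphism), so $\R(A,\prec) \subseteq C_\le(X_A)$. Since a uniform limit of continuous order-preserving functions is again continuous and order-preserving, $C_\le(X_A)$ is uniformly closed in $C(X_A)$. By the Stone-Weierstrass theorem for $\Nach$ (\cref{prop: SW for sbal}), $\R(A,\prec)$ is uniformly dense in $C_\le(X_A)$, so its uniform closure is precisely $S = C_\le(X_A)$, which is an object of $\sbal$. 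It then remains to prove ${\prec'}={\prec_S}$.

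For $\prec_S \subseteq \prec'$, I would argue by uniform approximation. Suppose $f \prec_S g$, witnessed by $s \in C_\le(X_A)$ with $f \le s \le g$, and fix $\varepsilon>0$. Using that $\R(A,\prec)$ is dense in $C_\le(X_A)$ and that $A$ is uniformly dense in $C(X_A)$ (as $\phi_A$ separates points of $X_A$), pick $c \in \R(A,\prec)$ within $\varepsilon$ of $s$ and $a',b' \in A$ within $\varepsilon$ of $f$ and $g$. Setting $a = a' \wedge c$ and $b = b' \vee c$, both lie in $A$, and $a \le c \le b$ with $c$ reflexive gives $a \prec b$ by \ref{P2}. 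The bounds $f \le s \le g$ then force $\|a-f\| \le \varepsilon$ and $\|b-g\| \le \varepsilon$. Letting $\varepsilon \to 0$ produces a sequence in $\prec$ converging to $(f,g)$, so $(f,g) \in \prec'$.

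The main direction is $\prec' \subseteq \prec_S$. Given $(f,g) \in \prec'$, choose $(a_k,b_k) \in \prec$ with $a_k \to f$ and $b_k \to g$, and after passing to a subsequence arrange $\|a_{k+1}-a_k\|,\|b_{k+1}-b_k\| \le 2^{-k}$. For each $k$, use \ref{RP5} to pick a reflexive witness $c_k \in \R(A,\prec)$ with $a_k \le c_k \le b_k$. The $c_k$ themselves need not converge, so I would not use them directly; instead I would correct them into a uniformly Cauchy sequence of witnesses by setting $\tilde c_1 = c_1$ and, inductively,
\[
\tilde c_{k+1} = \bigl(c_{k+1} \vee (\tilde c_k - 2^{-k})\bigr) \wedge (\tilde c_k + 2^{-k}).
\]
Each $\tilde c_{k+1}$ lies in $C_\le(X_A)$ because that set is closed under $\vee$ and $\wedge$ and under adding constants; the construction forces $\tilde c_k - 2^{-k} \le \tilde c_{k+1} \le \tilde c_k + 2^{-k}$, hence $\|\tilde c_{k+1}-\tilde c_k\| \le 2^{-k}$; and the estimates $a_{k+1} \le \tilde c_k + 2^{-k}$ and $\tilde c_k - 2^{-k} \le b_{k+1}$ (which follow from the inductive sandwich $a_k \le \tilde c_k \le b_k$ together with the rapid convergence of $a_k,b_k$) show that $a_{k+1} \le \tilde c_{k+1} \le b_{k+1}$, so the sandwich is maintained. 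Thus $(\tilde c_k)$ is uniformly Cauchy; since $C_\le(X_A)$ is uniformly closed it converges to some $s \in C_\le(X_A)$, and passing to the limit in $a_k \le \tilde c_k \le b_k$ yields $f \le s \le g$, i.e.\ $f \prec_S g$.

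I expect the main obstacle to be exactly this last construction: producing a \emph{single continuous} order-preserving interpolant between $f$ and $g$ in the limit. Merely knowing, for each $\varepsilon$, that some order-preserving function fits into $[f-\varepsilon,\,g+\varepsilon]$ is not enough, since the natural pointwise supremum or infimum of such functions is only lower or upper semicontinuous and can genuinely fail to be continuous on a Nachbin space (where up-sets of open sets need not be open, so the monotone envelope of $f$ need not be continuous). The role of the inductive correction is precisely to replace this coherent family of approximate witnesses by a uniformly convergent one, whose limit is therefore continuous; checking that each corrected term $\tilde c_{k+1}$ remains sandwiched between $a_{k+1}$ and $b_{k+1}$ is the one estimate that must be carried out carefully.
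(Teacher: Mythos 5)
Your proof is correct, and while it shares the paper's overall architecture (establish $S\in\sbal$, then prove the two inclusions, handling the hard inclusion ${\prec'}\subseteq{\prec_S}$ by manufacturing a uniformly Cauchy sequence of interpolants via lattice clamping), it diverges at two genuine points. For $S\in\sbal$, the paper simply notes that the operations of $C(X_A)$ are continuous, so the closure of an $\sbal$-subalgebra is again one; you instead invoke \cref{prop: SW for sbal} to get the sharper identification $S=C_\le(X_A)$, which your third step then uses, since your corrected witnesses $\tilde c_k$ are only asserted to lie in $C_\le(X_A)$. (In fact $\tilde c_k\in\R(A,\prec)$, because reflexive elements are closed under $\vee$, $\wedge$, and translation by reals by \ref{P3}, \ref{P4}, \ref{P6}, \ref{P8}; with that observation the limit lies in $S$ by definition and the appeal to Stone--Weierstrass becomes optional.) The real divergence is in ${\prec'}\subseteq{\prec_S}$: the paper never takes exact witnesses of the approximating pairs, but instead proves a Claim producing, for each $r>0$, some $a\in\R(A,\prec)$ with $f-r\le a\le g$, and runs an induction anchored to the fixed pair $(f,g)$, clamping each new approximate witness against the previous one. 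You thin $(a_k,b_k)$ to a rapidly convergent subsequence, take exact reflexive witnesses $c_k$ from \ref{RP5}, and clamp each against the previous corrected witness. The $2^{-k}$ clamping estimates are the same trick in both arguments, but your version buys a more transparent induction---in particular it sidesteps the paper's re-application of \cref{claim} to the modified pair bounded by $(f-1/2^{m+1})\vee(a_m-1/2^m)$ and $g\wedge(a_m+1/2^m)$, which strictly speaking requires rerunning the Claim's proof with exactly this kind of clamping---while the paper's version buys independence from the Stone--Weierstrass theorem and keeps the entire construction inside $\R(A,\prec)$.
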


\begin{proof}
By \cref{prop: functor R},  $\R(A,\prec)\in\sbal$. It is well known (and straightforward to see) that all operations on $C(X_A)$ are continuous. Therefore, $S$ is an $\sbal$-subalgebra of $C(X_A)$, and hence $S \in \sbal$.  We show that ${\prec'} = {\prec_S}$.

First, suppose that $f \prec_S g$. Then there is $s \in S$ with $f \le s \le g$. By assumption, there is a Cauchy sequence $\{a_n\}$ from $\R(A, \prec)$ converging to $s$. Let $\| s - a_n\| = r_n$. Then $\lim r_n = 0$ and $a_n - r_n \le s \le a_n + r_n$.  By the same reasoning, there is a sequence $\{f_n\}$ from $A$ converging to $f$, so $f_n - r_n' \le f \le f_n + r_n'$ where $r_n' = \| f - f_n\|$. By replacing $f_n$ with $f_n - r_n'$, we may assume that $f_n \le f$ for each $n$. Similarly, there is a sequence $\{g_n\}$ from $A$ 
converging to $g$ and  
we may assume that $g \le g_n$ for each $n$. Therefore,
\[
f_n - r_n \le f - r_n \le s - r_n \le a_n \le s + r_n \le g + r_n \le g_n + r_n.
\]
Thus, $f_n -r_n \prec g_n + r_n$. Because $\lim(f_n - r_n, g_n + r_n) = (f, g)$, 
we get $f \prec' g$. 

Next, suppose that $f \prec' g$. By the definition of $\prec'$, $(f, g) = \lim (f_n, g_n)$ with $f_n \prec g_n$ for each $n$.
Note that $f \le g$ because $f_n \le g_n$ for each $n$ and $\le$ is closed in $C(X_A) \times C(X_A)$. By induction we construct a sequence $\{a_n : n \ge 0\}$ in $\R(A, \prec)$ such that for each $n \ge 1$,
\begin{align}
f - 1/2^n &\le a_n \le g  \label{eqn4}\\
a_{n-1} - 1/2^{n-1} &\le a_{n} \le a_{n-1} + 1/2^{n-1}. \label{eqn5}
\end{align}
We utilize the following:

\begin{claim} \label{claim}
For each $0 < r \in \mathbb{R}$ there is $a \in \R(A, \prec)$ with $f - r \le a \le g$.
\end{claim}

\begin{proofclaim}
Because $(f_n)$ converges to $f$, there is $n$ such that $\|f - f_n \| \le r/2$. Therefore, $f - r/2 \le f_n \le f+ r/2$. Similarly, there is $m$ such that $\|g - g_m \| \le r/2$, so $g - r/2 \le g_m \le g + r/2$. Choose $p \ge \max\{n,m\}$. Then $f - r/2 \le f_p \le g_p \le g + r/2$. Since $f_p \prec g_p$, there is $a' \in \R(A, \prec)$ with $f_p \le a' \le g_p$. Thus, $f - r/2 \le a' \le g + r/2$. Setting $a = a' - r/2$, we have $a \in \R(A, \prec)$ and $f - r \le a \le g$. \qed
\end{proofclaim}

For the base case, by \cref{claim} there is $a_1 \in \R(A, \prec)$ with $f - 1/2 \le a_1 \le g$. Set $a_0 = a_1$. Then (\ref{eqn4}) and (\ref{eqn5}) are satisfied for $n = 1$. Suppose that $m \ge 1$ and we have
$a_0, \dots, a_m \in \R(A, \prec)$ satisfying (\ref{eqn4}) and (\ref{eqn5}) for all $1 \le n \le m$. By (\ref{eqn4}) for $n = m$ we get $f \le a_m  + 1/2^m$. In addition, it is clear that $a_{m} - 1/2^{m+1} \le a_m  + 1/2^m$. Therefore,
\[
f \vee (a_{m} - 1/2^{m+1}) \le a_m  + 1/2^m.
\]
Since $f, a_m\le g$, it is also clear that $f \vee (a_{m} - 1/2^{m+1}) \le g$. Thus, 
\[
f \vee (a_{m} - 1/2^{m+1}) \le g \wedge (a_m  + 1/2^m).
\]
By \ref{S2},
\begin{align*}
\left[(f - 1/2^{m+1}) \vee (a_{m} - 1/2^m)\right] + 1/2^{m+1} &=&  \\
(f - 1/2^{m+1} + 1/2^{m+1}) \vee (a_{m} - 1/2^m + 1/2^{m+1}) &=& \\
f \vee (a_{m} -1/2^{m+1}).
\end{align*}
Consequently,
\[
\left[(f - 1/2^{m+1}) \vee (a_{m} - 1/2^m)\right] + 1/2^{m+1} \le g \wedge (a_m + 1/2^m).
\]
By \cref{claim}, there is $a_{m+1} \in \R(A, \prec)$ satisfying
\[
(f - 1/2^{m+1}) \vee (a_{m} - 1/2^m) \le a_{m+1} \le g \wedge (a_m + 1/2^m).
\]
Therefore,
\begin{align*}
f - 1/2^{m+1} &\le a_{m+1} \le g \\
a_{m} - 1/2^m &\le a_{m+1} \le a_m + 1/2^m.
\end{align*}
Thus, (\ref{eqn4}) and (\ref{eqn5}) hold for $n = m+1$. By induction we have produced the desired sequence. Equation~(\ref{eqn5}) implies that $\{a_n\}$ is a
Cauchy sequence in $\R(A, \prec)$, so has a uniform limit $a \in S$. Taking limits in (\ref{eqn4}) as $n \to \infty$ gives $f \le a \le g$. Consequently, $f \prec_S g$. 
\end{proof}

The above two theorems motivate the following: 

\begin{definition} \plabel{def: unbal and usbal}
\hfill
\begin{enumerate}
    \item \label[def: unbal and usbal]{unbal} Let $\unbal$ be the full subcategory of $\nbal$ consisting of those $(A, \prec)$ for which
    $A$ is uniformly complete and $\prec$ is a closed subset of $A \times A$. 
    \item \label[def: unbal and usbal]{usbal} Let $\usbal$ be the full subcategory of $\sbal$ consisting of uniformly complete $\sbal$-algebras.
\end{enumerate}
\end{definition}

The next proposition gives a convenient characterization of when a Nachbin proximity on a uniformly complete $A \in \bal$ is closed. 

\begin{proposition} \label{rem: consequences of Dieudonne}
Let $(A, \prec)\in \nbal$ with $A$ uniformly complete. Then $(A,\prec) \in \unbal$ iff $\R(A, \prec)$ is closed in $A$.
\end{proposition}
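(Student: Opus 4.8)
The plan is to prove the two implications separately. The forward implication is a one-line continuity argument, while the reverse implication is where the work lies and is powered by the Dieudonn\'e lemma (\cref{prop: Dieudonne}). For the forward direction, suppose $(A, \prec) \in \unbal$, so that $\prec$ is closed in $A \times A$. The diagonal map $\delta : A \to A \times A$ given by $\delta(c) = (c,c)$ is continuous, and $\R(A, \prec) = \delta^{-1}(\prec)$, since $c \in \R(A, \prec)$ iff $c \prec c$ iff $(c,c) \in \prec$. Hence $\R(A, \prec)$ is the preimage of a closed set under a continuous map, and therefore closed in $A$.

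For the reverse direction, assume $\R(A, \prec)$ is closed in $A$. First I would record that, since $A$ is uniformly complete, $\phi_A : A \to C(X_A)$ is a $\bal$-isomorphism by Gelfand duality; being an $\ell$-algebra isomorphism it preserves the order-theoretically defined norm of \eqref{eqn: norm}, hence is a homeomorphism, so that closed subsets of $A$ correspond to closed subsets of $C(X_A)$. I then identify $A$ with $C(X_A)$ along $\phi_A$ and apply \cref{prop: Dieudonne}. The closure $S$ of $\R(A, \prec)$ in $C(X_A)$ equals $\R(A, \prec)$, since the latter is already closed, and the closure $\prec'$ of $\prec$ in $C(X_A) \times C(X_A) = A \times A$ satisfies ${\prec'} = {\prec_S} = {\prec_{\R(A, \prec)}}$.

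It then suffices to show ${\prec'} = {\prec}$, i.e.\ that $\prec$ equals its own closure. Since any relation is contained in its closure, ${\prec} \subseteq {\prec'}$, so I only need ${\prec'} \subseteq {\prec}$. By the above, ${\prec'} = {\prec_{\R(A, \prec)}}$, and if $a \prec_{\R(A, \prec)} b$ then there is $c \in \R(A, \prec)$ with $a \le c \le b$; since $c \prec c$, the chain $a \le c \prec c \le b$ yields $a \prec b$ by \ref{P2}. Hence ${\prec'} \subseteq {\prec}$, so ${\prec} = {\prec'}$ is closed in $A \times A$, and therefore $(A, \prec) \in \unbal$.

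I do not expect a genuine obstacle, since the analytic heart of the matter is already packaged in \cref{prop: Dieudonne}. The only points demanding care are the identification of $A$ with $C(X_A)$, specifically confirming that $\phi_A$ is a homeomorphism so that ``closed in $A$'' and ``closed in $C(X_A)$'' coincide, and checking that the proximity $\prec_S$ computed inside $C(X_A)$ pulls back along $\phi_A$ to $\prec_{\R(A, \prec)}$ on $A$.
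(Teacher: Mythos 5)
Your proposal is correct and takes essentially the same approach as the paper: the paper likewise obtains the implication ($\R(A,\prec)$ closed $\Rightarrow$ $\prec$ closed in $A\times A$) as an ``immediate'' consequence of \cref{prop: Dieudonne}, and your identification of $A$ with $C(X_A)$ via the isometric isomorphism $\phi_A$ together with the computation ${\prec'} = {\prec_{\R(A,\prec)}} = {\prec}$ (using \ref{P2} and reflexivity) is precisely the correct unpacking of that step. For the converse, the paper argues with sequences---a limit of reflexive elements is reflexive when $\prec$ is closed---which is just a rephrasing of your diagonal-preimage argument.
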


\begin{proof}
That $\R(A, \prec)$ closed in $A$ implies that $\prec$ is closed in $A \times A$ is immediate from  \cref{prop: Dieudonne}. For the other implication, let $\prec$ be closed in $A\times A$. If $a \in A$ is in the closure of $\R(A, \prec)$, 
there is a sequence $\{a_n\}$ from $\R(A, \prec)$ converging to $a$. Therefore, $\{(a_n, a_n)\}$ converges to $(a,a)$. Since $\prec$ is closed in $A \times A$ and $a_n \prec a_n$ for each $n$, we conclude that $a \prec a$, so $a \in \R(A, \prec)$, and hence $\R(A, \prec)$ is closed in $A$.
\end{proof}

We show that not every Nachbin proximity on a uniformly complete $\bal$-algebra is closed.

\begin{example} \label{ex: PP example} 
Let $X = [0,1]$, $A = C(X)$, and $PP(X)$ be the set of piecewise polynomial functions in $C(X)$. Since $PP(X)$ is a $\bal$-subalgebra of $C(X)$ that separates points of $X$, $PP(X)$ is uniformly dense in $C(X)$ by \cref{prop: equivalent conditions for density}. Define $\prec$ on $A$ by $f \prec g$ if there is ${c \in C_\le(X) \cap PP(X)}$ with $f \le c \le g$. 
Since $C_\le(X) \cap PP(X)$ is an $\sbal$-subalgebra of $PP(X)$, $\prec$ is a reflexive proximity on $A$. Because each $f \in PP(X)$ has bounded variation, $PP(X) = \B\R(A, \prec)$ (see \cite[p.~117]{RF10}). 
Therefore, $\prec$ is a Nachbin proximity. 
However, $\prec$ is not a closed proximity. To see this, 
let $f \in C(X)$ be given by $f(x) = e^x$. Since there is a sequence $\{p_n\}$ in $PP(X)$ converging to $f$,  $\{(p_n, p_n)\}$ 
converges to $(f,f)$ in $C(X) \times C(X)$. But $f \not\prec f$ since $f \notin PP(X)$. 
Therefore, $\prec$ is not closed. 
Observe that 
$\phi_A : (A, \prec) \to (C(X_A), \prec_{X_A})$ preserves but does not reflect proximity. Indeed, $\phi_A(f) \prec_{X_A} \phi_A(f)$ since $\phi_A(f) \in C_\le(X_A)$, but $f \not\prec f$.
\end{example}

Standard examples of $\usbal$-algebras and $\unbal$-algebras come from Nachbin spaces. Indeed, if $X\in\Nach$, then $C_\le(X)\in\usbal$ since the limit of any Cauchy sequence from $C_\le(X)$ is again in $C_\le(X)$ because $\le$ is closed in $X\times X$. Therefore, $C_\le(X)$ is closed in $C(X)$, and hence $(C(X),\prec_X) \in \unbal$ by \cref{BC_le dense in C,rem: consequences of Dieudonne}. We are ready to prove that these examples are typical by establishing dual equivalence between $\Nach$, $\unbal$, and $\usbal$.

\begin{theorem} \label{thm: main duality}
$\unbal$ is equivalent to $\usbal$ and dually equivalent to $\Nach$.
\[
\begin{tikzcd}
\unbal \arrow[rr, "\R"] \arrow[dr, shift left = .5ex, "\X"] && \usbal \arrow[dl, shift right = .5ex, "\Y"']\\
& \Nach \arrow[ur, shift right = .5ex, "\C_\le"'] \arrow[ul, shift left = .5ex, "\C"] &
\end{tikzcd}
\]
\end{theorem}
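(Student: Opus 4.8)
The plan is to assemble the dual equivalence from the contravariant adjunction of \cref{thm: contravariant adjunction} together with the two generalized classical theorems proved just above. By \cref{nbal case}, the functors $\C : \Nach \to \nbal$ and $\X : \nbal \to \Nach$ form a contravariant adjunction with units $\phi : 1_{\nbal} \to \C\X$ and $\eta : 1_\Nach \to \X\C$, and $\eta$ is already a natural isomorphism. The standard criterion (see, e.g., \cite[p.~93]{Mac71}) says that such an adjunction restricts to a dual equivalence between $\Nach$ and a full subcategory $\mathcal{D}$ of $\nbal$ precisely when $\phi_A$ is an isomorphism for every $(A,\prec)\in\mathcal{D}$ and $\C X$ lands in $\mathcal{D}$ for every $X\in\Nach$. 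So the heart of the argument is to verify that for $(A,\prec)\in\unbal$ the unit $\phi_A : (A,\prec) \to (C(X_A),\prec_{X_A})$ is an $\nbal$-isomorphism, and that $(C(X),\prec_X)\in\unbal$ for each $X\in\Nach$. The latter is exactly the remark immediately preceding the theorem: $C_\le(X)$ is closed in $C(X)$ because $\le$ is closed in $X\times X$, so \cref{BC_le dense in C,rem: consequences of Dieudonne} give $(C(X),\prec_X)\in\unbal$.

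First I would treat $\phi_A$. By \cref{nbal iso}, it suffices to show that $\phi_A$ is a $\bal$-isomorphism that preserves and reflects proximity. Since $A$ is uniformly complete, $\phi_A$ is a $\bal$-isomorphism by Gelfand duality, and by \cref{nbal case} it preserves proximity, so the whole content is \emph{reflection} of proximity. Identifying $A$ with $\phi_A[A]=C(X_A)$, reflection amounts to the statement $\prec = \prec_{X_A}$. By the Dieudonn\'e lemma \cref{prop: Dieudonne}, the closure $\prec'$ of $\prec$ in $C(X_A)\times C(X_A)$ equals $\prec_S$, where $S$ is the closure of $\R(A,\prec)$ in $C(X_A)$. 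Because $(A,\prec)\in\unbal$ means $\prec$ is closed, we have $\prec' = \prec$; and by \cref{rem: consequences of Dieudonne}, $\R(A,\prec)$ is closed, so $S=\R(A,\prec)$ and thus $\prec_S = \prec_{\R(A,\prec)} = \prec$. It remains to identify this common relation with $\prec_{X_A}$, i.e. to see that the reflexive elements $\R(A,\prec)=\R(C(X_A),\prec_{X_A})=C_\le(X_A)$ coincide; this is where the Stone-Weierstrass theorem \cref{prop: SW for sbal} enters, giving that $\phi_A[\R(A,\prec)]$ is uniformly dense in $C_\le(X_A)$, which combined with closedness of $\R(A,\prec)$ forces $\R(A,\prec)=C_\le(X_A)$ and hence $\prec=\prec_{X_A}$.

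Having established that $\C$ and $\X$ restrict to a dual equivalence between $\Nach$ and $\unbal$, the equivalence $\unbal\simeq\usbal$ follows from the relationship $\R\circ\C=\C_\le$ and $\Y\circ\R\cong\X$ recorded in \cref{prop: functor X}. Concretely, I would check that $\R$ restricts to a functor $\unbal\to\usbal$ and that $\B$ (sending $S$ to $(\B S,\prec_S)$) restricts to a functor $\usbal\to\unbal$, with these two mutually inverse up to natural isomorphism. That $\R$ lands in $\usbal$ uses \cref{rem: consequences of Dieudonne}: for $(A,\prec)\in\unbal$, $\R(A,\prec)$ is closed in the uniformly complete $A$, hence uniformly complete, so $\R(A,\prec)\in\usbal$. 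Conversely, for $S\in\usbal$ one shows $\B S$ is uniformly complete and $\prec_S$ is closed, placing $(\B S,\prec_S)$ in $\unbal$; here \cref{thm: sbal = bnbal} supplies that $\B$ and $\R$ are inverse equivalences between $\sbal$ and the skeletal objects, and one checks the uniform-completeness conditions are matched on the two sides.

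The main obstacle I anticipate is the reflection step for $\phi_A$, specifically pinning down that $\R(A,\prec)=C_\le(X_A)$ rather than merely a dense subalgebra of it. The inclusion $\phi_A[\R(A,\prec)]\subseteq C_\le(X_A)$ and density are handed to us by \cref{nbal case} and \cref{prop: SW for sbal} respectively, but promoting density to equality is exactly where the closedness hypothesis of $\unbal$ is indispensable, as \cref{ex: PP example} shows it genuinely can fail without it. Once $\R(A,\prec)=C_\le(X_A)$ is secured, the identification $\prec=\prec_{X_A}$ and the verification that the $\usbal$ side matches up are comparatively routine, relying on the already-established functoriality and adjunction data.
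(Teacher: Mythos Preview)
Your argument for the dual equivalence $\Nach\simeq\unbal$ is essentially the paper's: $\phi_A$ is a $\bal$-isomorphism by Gelfand duality, preserves proximity by \cref{nbal case}, and reflects proximity because Stone--Weierstrass (\cref{prop: SW for sbal}) plus closedness of $\R(A,\prec)$ force $\phi_A[\R(A,\prec)]=C_\le(X_A)$. Your detour through the Dieudonn\'e lemma to conclude $\prec=\prec_{\R(A,\prec)}$ is harmless but redundant, since this equality is immediate from \ref{RP5}; the paper simply argues directly that any $f\in C_\le(X_A)$ sandwiched between $\phi_A(a)$ and $\phi_A(b)$ must equal some $\phi_A(c)$ with $c$ reflexive.

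There is, however, a genuine gap in your treatment of the equivalence $\unbal\simeq\usbal$. You propose to use $\B$ as the quasi-inverse of $\R$, checking that for $S\in\usbal$ the envelope $\B S$ is uniformly complete and $\prec_S$ is closed. But $\B S$ is \emph{not} uniformly complete in general: for $X=[0,1]$, the algebra $\B C_\le(X)$ consists of differences of order-preserving continuous functions, hence of continuous functions of bounded variation, which is a proper uniformly dense subalgebra of $C(X)$ (see the remark following \cref{def: category pbal}). So $\B$ does not land in $\unbal$, and the equivalence $\sbal\simeq\spbal$ from \cref{thm: sbal = bnbal} does not restrict in the way you want.

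The paper sidesteps this by not trying to exhibit an explicit quasi-inverse for $\R$. Instead it proves the dual equivalence $\Nach\simeq\usbal$ directly from the second contravariant adjunction \cref{sbal case}, showing that $\psi_S=\phi_{\B S}|_S:S\to C_\le(X_{\B S})$ is an isomorphism for each $S\in\usbal$; this again follows from Stone--Weierstrass together with uniform completeness of $S$. The equivalence $\unbal\simeq\usbal$ is then a corollary of having established both dual equivalences with $\Nach$. If you want an explicit quasi-inverse for $\R$, it is $\C\circ\Y$, not $\B$.
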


\begin{proof}
As we observed before the theorem, 
$(C(X),\prec_X) \in \unbal$ and $C_\le(X) \in \usbal$ for each $X \in \Nach$. Thus, the functors $\C : \Nach \to \unbal$ and $\C_\le : \Nach \to \usbal$ are well defined (recall \cref{prop: C functor}). Let $(A, \prec) \in \unbal$. To see that $\R(A, \prec) \in \usbal$, let $\{a_n\}$ be a Cauchy sequence in $\R(A, \prec)$. Then $a = \lim a_n \in A$ since $A$ is uniformly complete. Because $\prec$ is closed in $A \times A$, it follows that $a \in \R(A, \prec)$. Thus, $\R(A, \prec) \in \usbal$, and hence $\R$ restricts to a functor from $\unbal$ to $\usbal$. 

To see that $\unbal$ and $\Nach$ are dually equivalent, by \cref{nbal case} it suffices to show that $\phi : 1_{\unbal} \to \C\X$ is a natural 
isomorphism. If $(A, \prec) \in \unbal$, then $\phi_A : A \to C(X_A)$ is a $\bal$-isomorphism by Gelfand duality and an $\nbal$-morphism by \cref{nbal case}. Thus, it is left to show that $\phi_A$ reflects proximity. Let $a,b \in A$ with $\phi_A(a) \prec_{X_A} \phi_A(b)$. Then there is $f \in C_\le(X_A)$ with $\phi_A(a) \le f \le \phi_A(b)$. By \cref{prop: SW for sbal}, $\R(A, \prec)$ is dense in $C_\le(X_A)$. The previous paragraph shows that $\phi_A[\R(A, \prec)]$ is uniformly complete. Therefore, $C_\le(X_A) = \phi_A[\R(A, \prec)]$, and so $f = \phi_A(c)$ for some $c \in \R(A, \prec)$. Consequently, $a \le c \le b$, and hence $a \prec b$. 

Finally, to see that $\usbal$ is dually equivalent to $\Nach$, by \cref{sbal case} it suffices to show that $\psi : 1_{\usbal} \to \C_\le\Y$ is a natural isomorphism.
Let $S \in \usbal$ and set $(A, \prec) = \B S$. By definition, $\Y S = \X(A, \prec)$, and so $\C_\le\Y S = \C_\le\X(A,\prec)$. We have $S = \R(A, \prec)$, and by \cref{prop: SW for sbal}, $\phi_A[S]$ is dense in $C_\le(X_A)$. Because $S \in \usbal$, we get $\phi_A[S] = C_\le(X_A)$. Consequently, $\phi_A|_S$ is an $\sbal$-isomorphism between $S$ and $\C_\le\Y S$. 
Thus, $\usbal$ and $\Nach$ are dually equivalent, and hence $\usbal$ is equivalent to $\unbal$. 
\end{proof}

\begin{remark}
It is a consequence of the above theorem that $\R : \unbal \to \usbal$ is an equivalence, whose quasi-inverse is $\C\circ\Y : \usbal \to \unbal$.
\end{remark}

As we saw in \cref{sec: pbal}, $\B : \sbal\to\spbal$ is an equivalence of categories. We next observe that this equivalence restricts to an equivalence between $\usbal$ and the full subcategory of $\spbal$ consisting of those skeletal $\pbal$-algebras whose skeleton is uniformly complete. 

\begin{definition} \label{def: category pbal}
Let $\uspbal$ be the full subcategory of $\spbal$ consisting of those $(A, \prec)$ whose $\sbal$-skeleton is 
uniformly complete.
\end{definition}

\begin{remark}
While the skeleton of each $(A,\prec) \in \uspbal$ is a uniformly complete $\sbal$-algebra, $A$ itself may not be uniformly complete. To see this, let $X=[0,1]$. Then $C_\le(X) \in \usbal$, but $\B C_\le(X)$ is uniformly dense in $C(X)$ by \cref{BC_le dense in C}, and hence $\B C_\le(X)$ is not uniformly complete since $\B C_\le(X) \ne C(X)$ (see, e.g., \cite[p.~117]{RF10}).
\end{remark}

As an immediate consequence of \cref{thm: sbal = bnbal,def: category pbal}, we obtain:

\begin{proposition} \label{prop: usbal = ubnbal}
The equivalence of Theorem~\emph{\ref{thm: sbal = bnbal}} restricts to yield an equivalence between $\usbal$ and $\uspbal$. 
\end{proposition}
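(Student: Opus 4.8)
The plan is to treat this as a routine restriction of an already-established categorical equivalence. By \cref{thm: sbal = bnbal}, the functors $\B : \sbal \to \spbal$ and $\R : \spbal \to \sbal$ form an equivalence, and in particular they come equipped with natural isomorphisms $1_{\sbal} \cong \R\B$ and $\B\R \cong 1_{\spbal}$. Since $\usbal$ and $\uspbal$ are \emph{full} subcategories of $\sbal$ and $\spbal$ respectively, the standard fact about equivalences (an equivalence restricts to an equivalence of full subcategories as soon as each functor carries the relevant objects into the other subcategory) reduces the whole statement to two object-level containments: that $\B$ sends objects of $\usbal$ into $\uspbal$, and that $\R$ sends objects of $\uspbal$ into $\usbal$.

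First I would verify $\B[\usbal] \subseteq \uspbal$. Let $S \in \usbal$, so $S$ is a uniformly complete $\sbal$-algebra. By the construction in \cref{prop: functor B}, $\B S = (\B S, \prec_S)$, and by \cref{lem: ref prox from semibal} the $\sbal$-skeleton satisfies $\R(\B S, \prec_S) = S$. Hence the skeleton of $\B S$ is the uniformly complete algebra $S$, so $\B S \in \uspbal$ by \cref{def: category pbal}. The reverse direction $\R[\uspbal] \subseteq \usbal$ is immediate from the definitions: if $(A, \prec) \in \uspbal$, then by \cref{def: category pbal} its skeleton $\R(A, \prec)$ is uniformly complete, that is, $\R(A, \prec) \in \usbal$.

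Having established these two containments, I would conclude by observing that the components of the natural isomorphisms $1_{\sbal} \cong \R\B$ and $\B\R \cong 1_{\spbal}$, evaluated at objects of $\usbal$ and $\uspbal$, are isomorphisms whose sources and targets both lie in the respective subcategory; fullness then guarantees these are morphisms (indeed isomorphisms) within $\usbal$ and $\uspbal$. Thus the restricted functors $\B : \usbal \to \uspbal$ and $\R : \uspbal \to \usbal$ inherit the natural isomorphisms and form an equivalence.

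There is essentially no serious obstacle here; the statement is formal once the equivalence of \cref{thm: sbal = bnbal} is in hand. The only point requiring any content is the identification $\R(\B S, \prec_S) = S$ needed to see that $\B$ preserves uniform completeness of the skeleton, and this is supplied directly by \cref{lem: ref prox from semibal}. Everything else is a bookkeeping check that the two full subcategories are matched up by the functors.
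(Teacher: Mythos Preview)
Your proposal is correct and is precisely the natural unpacking of what the paper records as an immediate consequence of \cref{thm: sbal = bnbal} and \cref{def: category pbal}. The only substantive check, that $\R(\B S,\prec_S)=S$ so $\B$ preserves uniform completeness of the skeleton, is exactly the point supplied by \cref{lem: ref prox from semibal}, matching the paper's implicit reasoning.
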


\begin{corollary} \plabel{cor: consequences}
\hfill
\begin{enumerate}
\item \label[cor: consequences]{spbal in nbal} $\spbal$ is a full subcategory of $\nbal$.
\item \label[cor: consequences]{spbal = uspbal} There is a contravariant adjunction $\X : \spbal \to \Nach$ and $\B\C_\le : \Nach \to \spbal$, which restricts to a dual equivalence between $\uspbal$ and $\Nach$.
\item \label[cor: consequences]{uspbal = unbal} The functors $\C\X : \uspbal \to \unbal$ and $\B\R : \unbal \to \uspbal$ yield an equivalence between $\uspbal$ and $\unbal$.
\end{enumerate}  
\end{corollary}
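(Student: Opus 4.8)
The plan is to obtain all three items by transporting, along the equivalences already established, the contravariant adjunctions of \cref{thm: contravariant adjunction} and the dual equivalences of \cref{thm: main duality}; almost no new computation is needed, and the work is entirely in matching the named functors to the relevant composites.

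For \ref{spbal in nbal}, I would argue directly from the definitions. If $(A,\prec)\in\spbal$, then $A=\B\R(A,\prec)$ by definition of skeletal, so $\B\R(A,\prec)$ is trivially uniformly dense in $A$; hence $\prec$ is a Nachbin proximity and $(A,\prec)\in\nbal$. Since $\spbal$ and $\nbal$ are both full subcategories of $\pbal$, it follows at once that $\spbal$ is a full subcategory of $\nbal$.

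For \ref{spbal = uspbal}, I would first note that the functor $\X:\nbal\to\Nach$ of \cref{prop: functor X} restricts to $\X:\spbal\to\Nach$ by \ref{spbal in nbal}, and that $\B\C_\le$ lands in $\spbal$ since the essential image of $\B$ is $\spbal$ (\cref{thm: sbal = bnbal}). To produce the contravariant adjunction I would transport the adjunction $\C_\le\dashv\Y$ of \cref{sbal case} along the equivalence $\B:\sbal\to\spbal$: using that $\B$ is fully faithful and that $\Y=\X\B$, one gets natural bijections
\[
\hom_{\spbal}(\B S, \B\C_\le X) \cong \hom_{\sbal}(S, \C_\le X) \cong \hom_{\Nach}(X, \Y S) = \hom_{\Nach}(X, \X\B S),
\]
and since every object of $\spbal$ is isomorphic to some $\B S$, this is exactly the desired contravariant adjunction $\X:\spbal\to\Nach$ and $\B\C_\le:\Nach\to\spbal$. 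For the restriction, I would invoke \cref{prop: usbal = ubnbal}, by which $\B$ restricts to an equivalence $\usbal\to\uspbal$, together with the dual equivalence between $\Nach$ and $\usbal$ from \cref{thm: main duality}; transporting the latter along $\B$ (and using $\Y\cong\X\B$ so that $\X$ is the resulting functor out of $\uspbal$) yields the dual equivalence between $\uspbal$ and $\Nach$.

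For \ref{uspbal = unbal}, I would compose the dual equivalence $\X:\uspbal\to\Nach$, $\B\C_\le:\Nach\to\uspbal$ from \ref{spbal = uspbal} with the dual equivalence $\X:\unbal\to\Nach$, $\C:\Nach\to\unbal$ from \cref{thm: main duality}. A composite of two dual equivalences is a (covariant) equivalence, realized here by $\C\X:\uspbal\to\unbal$ with quasi-inverse $\B\C_\le\X:\unbal\to\uspbal$. The remaining point, which is the one requiring a small argument rather than pure formalism, is to identify this quasi-inverse with the stated functor $\B\R$. For this I would use $\C_\le=\R\C$ (\cref{prop: C functor}) and the fact that $\phi:1_{\unbal}\to\C\X$ is a natural isomorphism (\cref{thm: main duality}): on $\unbal$ we then have $\C_\le\X=\R\C\X\cong\R$, whence $\B\C_\le\X\cong\B\R$, and $\C\X$ together with $\B\R$ form the equivalence between $\uspbal$ and $\unbal$. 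The main obstacle throughout is precisely this bookkeeping of natural isomorphisms: making sure that the abstract composites of the transported adjunctions coincide, up to natural isomorphism, with the explicitly named functors $\X$, $\B\C_\le$, $\C\X$, and $\B\R$.
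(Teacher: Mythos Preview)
Your proof is correct and follows essentially the same route as the paper: all three items are obtained by transporting the adjunctions of \cref{thm: contravariant adjunction} and the equivalences of \cref{thm: main duality} along the equivalence $\B:\sbal\to\spbal$ (and its restriction to $\usbal\to\uspbal$), with \ref{uspbal = unbal} reduced to the identification $\B\R\cong\B\C_\le\X$. The paper's argument is nearly identical, only phrasing the key identification as $\R\cong\C_\le\circ\X$ (citing the proof of \cref{thm: main duality}) rather than deriving it from $\C_\le=\R\C$ and $\C\X\cong 1_{\unbal}$ as you do.
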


\begin{proof}
\ref{spbal in nbal} Let $(A, \prec) \in \spbal$. Then $A = \B\R(A, \prec)$, and hence $\B\R(A,\prec)$ is uniformly dense in $A$. Thus, $(A, \prec) \in \nbal$.

\ref{spbal = uspbal} By \cref{thm: sbal = bnbal}, $\R :\spbal \to \sbal$ and $\B : \sbal \to \spbal$ yield an equivalence between $\spbal$ and $\sbal$. By \cref{sbal case}, there is a contravariant adjunction $\Y : \sbal\to\Nach$ and $\C_\le : \Nach\to\sbal$. Since $\Y \circ \R \cong \X$ (see \cref{prop: functor X}), composing the relevant functors gives a contravariant adjunction $\X : \spbal \to \Nach$ and ${\B \circ \C_\le : \Nach \to \spbal}$, 
which restricts to a dual equivalence between $\uspbal$ and $\Nach$ because the essential image of $\B\circ\C_\le$ is $\uspbal$. 

\ref{uspbal = unbal} By the proof of \cref{thm: main duality}, $\R \cong \C_\le \circ \X$. Therefore, $\B\R \cong \B \circ \C_\le \circ \X$, and the result follows from \ref{spbal = uspbal} and \cref{thm: main duality}.
\end{proof}

Putting together the equivalences and dual equivalences 
obtained in this paper, we arrive at the following diagrams, which commute up to natural isomorphism, 
where 
being a full subcategory is indicated by \hspace{-.25in}$\begin{tikzcd}[column sep = 1.5pc]\phantom{M}\arrow[r, hook]& .\end{tikzcd}$

\[
\begin{tikzcd}[column sep = 1.5pc]
& \pbal \arrow[dl, "\R"'] \arrow[dr, "\B\R"] &&&&& \pbal \\
\sbal \arrow[ddr, bend right = 110, "\Y"'] \arrow[rr, "\B"] && \spbal \arrow[ddl, bend left = 110, "\X"] &&&&\nbal \arrow[u, hookrightarrow] \arrow[ddl, bend left = 110, "\X"] \\
\usbal \arrow[rr, "\B"'] \arrow[u, hookrightarrow] && \uspbal \arrow[u, hookrightarrow] \arrow[dl, shift right = .5ex, "\X"'] &&\uspbal \arrow[rru, hookrightarrow] \arrow[rr, "\C\X"] \arrow[dr, shift left = .5ex, "\X"] && \unbal \arrow[u, hookrightarrow] \\
& \Nach \arrow[ul, "\C_\le"] \arrow[ur, shift right = .5ex, "\B\C_\le"'] &&&& \Nach \arrow[ul, shift left = .5ex, "\B\C_\le"] \arrow[ur, "\C"']
\end{tikzcd}
\]

\section{Comparison with the work of De~Rudder and Hansoul} \label{sec: comparison with DH18}

In this final section we compare our approach to that of De~Rudder and Hansoul \cite{DH18}, and show how to derive their duality from the results of the previous section.
We start by recalling the relevant notions from \cite{DH18}.

\begin{definition} \plabel{def: sbal plus} 
Let $S$ be an $\ell$-monoid $S$ satisfying \ref{S4}, \ref{S5}, \ref{S6}, \ref{S8}, and \ref{S9}. We call $S$ an {\em $\sbal^+$-algebra}  provided it additionally satisfies 
\begin{enumerate}
\item \label[def: sbal plus]{positive} $0 \le a$ for each $a \in S$;
\item \label[def: sbal plus]{newS7} there is a unital $\ell$-monoid morphism $\rho : \mathbb{R}^+ \to S$ that preserves multiplication;
\item \label[def: sbal plus]{diff} if $a \in S$ and $r \in \mathbb{R}^+$ with $r \le a$, then there is $b \in S$ such that $a = b + r$.
\end{enumerate}
\end{definition}

Similar to the case of $\sbal$, if $S \in \sbal^+$, there is an $\mathbb{R}^+$-action on $S$, given by $ra = \rho(r)a$ for each $a \in S$ and $r \in \mathbb{R}^+$. An $\sbal^+$-morphism is then a unital $\ell$-monoid 
morphism preserving multiplication and the $\mathbb R^+$-action.

\begin{definition} \label{def: usbal plus}
Let $\sbal^+$ be the category of $\sbal^+$-algebras and $\sbal^+$-morphisms. 
\end{definition}

\begin{remark}
Due to \cref{diff},
$\sbal^+$ is a proper subcategory of the category defined in \cite[Def.~2.1]{DH18} (under the name $\sbal$). Our notation is motivated by the fact that each $\sbal^+$-algebra is the positive cone of an $\sbal$-algebra (see \cref{thm: sbal = sbal plus}).  
\end{remark}

\begin{convention}
If $S \in \sbal^+$, we identify $\mathbb{R}^+$ with $\rho[\mathbb{R}^+] \subseteq S$.
\end{convention}

We show that taking the positive cone yields a functor from $\sbal$ to $\sbal^+$.

\begin{proposition} \label{prop: functor from sbal to sbal plus}
There is a functor $\P : \sbal \to \sbal^+$ which sends $S \in \sbal$ to its positive cone $S^+$ and an $\sbal$-morphism $\alpha : S \to T$ to the restriction $\alpha|_{S^+} : S^+ \to T^+$.
\end{proposition}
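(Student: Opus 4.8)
The plan is to verify that $\P$ is well-defined on objects and on morphisms, and that it respects composition and identities. The main conceptual point is simply that the positive cone $S^+$ of an $\sbal$-algebra $S$, together with its inherited operations, satisfies exactly the axioms \ref{S4}, \ref{S5}, \ref{S6}, \ref{S8}, \ref{S9} together with the three defining conditions of \cref{def: sbal plus}. Most of these are immediate from the fact that $S^+$ is closed under the relevant operations. First I would record that $S^+$ is closed under $+$, $\vee$, $\wedge$, and (on positives) multiplication, so that $S^+$ is an $\ell$-monoid satisfying \ref{S1}--\ref{S3}; the multiplicative structure required by \ref{S4}, together with \ref{S5} and \ref{S6}, is inherited verbatim from $S$ since these axioms for $\sbal$-algebras are already stated only on $S^+$. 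Boundedness \ref{S8} and the archimedean condition \ref{S9} for elements of $S^+$ follow at once by restricting the corresponding statements for $S$.

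Next I would check the three extra axioms of \cref{def: sbal plus}. Condition \ref{positive}, that $0 \le a$ for all $a \in S^+$, holds by definition of the positive cone. For \ref{newS7}, the map $\rho : \mathbb{R} \to S$ coming with $S$ restricts to a unital multiplication-preserving $\ell$-monoid morphism $\rho|_{\mathbb{R}^+} : \mathbb{R}^+ \to S^+$, since $\rho$ is order preserving and hence sends $\mathbb{R}^+$ into $S^+$. The only condition requiring a small argument is the difference property \ref{diff}: given $a \in S^+$ and $r \in \mathbb{R}^+$ with $r \le a$, I would set $b = a - r = a + (-r)$, computed inside $S$ using \cref{conv: R inside S}. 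Since $r \le a$, cancellativity (\cref{rem: sbals are cancellative}) together with \ref{S1} gives $0 \le a - r = b$, so $b \in S^+$ and $a = b + r$ as required.

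Finally I would verify that $\P$ is well-defined and functorial on morphisms. If $\alpha : S \to T$ is an $\sbal$-morphism, then $\alpha$ is order preserving, so $\alpha[S^+] \subseteq T^+$, and $\alpha|_{S^+} : S^+ \to T^+$ is a unital $\ell$-monoid morphism preserving multiplication and the $\mathbb{R}^+$-action because $\alpha$ does; hence $\alpha|_{S^+}$ is an $\sbal^+$-morphism. Preservation of composition and of identity morphisms is immediate since restriction to the positive cone is compatible with composition.

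I do not expect a genuine obstacle here; the statement is essentially a bookkeeping check that the positive cone inherits all the structure. The only step demanding even modest care is the difference property \ref{diff}, whose entire content is the observation that $r \le a$ forces $a - r \ge 0$ in a cancellative $\ell$-monoid; everything else is inheritance of axioms that, for $\sbal$-algebras, were already formulated on the positive cone.
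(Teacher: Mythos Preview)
Your proposal is correct and follows essentially the same approach as the paper: both arguments note that all axioms except \ref{diff} are inherited immediately, verify \ref{diff} by observing that $b := a - r \in S^+$ whenever $r \le a$, and then check that restriction to the positive cone gives a well-defined $\sbal^+$-morphism compatible with composition and identities. The paper's version is simply terser, declaring the non-\ref{diff} axioms ``obvious'' rather than spelling them out.
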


\begin{proof} 
Let $S \in \sbal$. To see that $S^+ \in \sbal^+$ it is sufficient to verify \cref{diff} since all the other axioms in \cref{def: sbal plus} are obvious. 
Let $a \in S^+$ and $r \in \mathbb{R}^+$ with $r \le a$. Since $0 \le a-r \in S$, 
we have $b := a-r \in S^+$, so $a = b + r$. 
Consequently, $S^+ \in \sbal^+$. It is also clear that if $\alpha : S \to T$ is an $\sbal$-morphism, then $\alpha|_{S^+} : S^+ \to T^+$ is a well-defined $\sbal^+$-morphism, and that $\P$ preserves composition and identity morphisms. Thus, $\P : \sbal \to \sbal^+$ is a functor.
\end{proof}

Let $S \in \sbal^+$. \cref{def: bal envelope} can be employed to define the $\bal$-envelope $\B S$ of $S$, yielding \cite[Def.~2.5]{DH18} (recall that $\B S = \{ [a,b] : a,b \in S^+ \}$). We use $\varepsilon : S \to \B S$ to define the $\sbal$-envelope of $S$. 

\begin{definition} \label{def: S plus R}
For $S \in \sbal^+$, define the {\em $\sbal$-envelope} of $S$ by setting $$\Q S = \{ \varepsilon(a) - r : a \in S, r \in \mathbb{R} \} \subseteq \B S.$$ 
\end{definition}

\begin{lemma} \plabel{lem: properties of S plus R}
Let $S \in \sbal^+$.
\begin{enumerate}
\item \label[lem: properties of S plus R]{S = (S+R) plus} $\varepsilon[S] = (\Q S)^+$. 
\item \label[lem: properties of S plus R]{S + R in sbal} $\Q S \in \sbal$.
\end{enumerate}
\end{lemma}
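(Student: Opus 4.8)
The plan is to establish the two parts in sequence, proving \cref{S = (S+R) plus} first and then invoking it to control the positive cone when verifying the semialgebra axioms for \cref{S + R in sbal}. Throughout I use the description of $\B S$ from \cref{def: bal envelope}: in particular that $\varepsilon(a) = [a,0]$, that $[p,q] \le [p',q']$ iff $p + q' \le p' + q$, and the join/meet formulas $[a,b]\vee[c,d] = [(a+d)\vee(b+c),\, b+d]$ and $[a,b]\wedge[c,d] = [(a+d)\wedge(b+c),\, b+d]$.

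For \cref{S = (S+R) plus}, the inclusion $\varepsilon[S] \subseteq (\Q S)^+$ is immediate: each $\varepsilon(a) = [a,0]$ lies in $\Q S$ (take $r=0$) and is $\ge 0$ because $0 \le a$ in $S$. For the reverse inclusion I would take $x = \varepsilon(a) - r \in (\Q S)^+$ and split on the sign of $r$. If $r \le 0$, then $\varepsilon(a) - r = \varepsilon(a + (-r))$ with $a + (-r) \in S$, so $x \in \varepsilon[S]$ with nothing further to check. The essential case is $r > 0$, where $\varepsilon(a) - r = [a,r]$; here the hypothesis $x \ge 0$ translates, via the order on $\B S$, into $r \le a$. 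This is precisely the hypothesis of the differencing axiom \cref{diff}, which yields $b \in S$ with $a = b + r$, whence $[a,r] = [b+r, r] = [b,0] = \varepsilon(b) \in \varepsilon[S]$. I expect this sign bookkeeping and the single appeal to \cref{diff} to be the only nonroutine part of the first claim.

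For \cref{S + R in sbal}, the strategy is to show that $\Q S$ is an $\sbal$-subalgebra of $\B S \in \bal \subseteq \sbal$, so that the axioms transfer. Closure under addition is clear from $(\varepsilon(a)-r)+(\varepsilon(b)-s) = \varepsilon(a+b) - (r+s)$; taking $a=0$ shows $\mathbb{R} \subseteq \Q S$, which supplies the morphism $\rho$ of \ref{S7}; and closure under the $\mathbb{R}^+$-action follows from $r\varepsilon(a) = \varepsilon(ra)$. For the multiplication I would use \cref{S = (S+R) plus}: since $(\Q S)^+ = \varepsilon[S]$ and $\varepsilon(a)\varepsilon(b) = \varepsilon(ab)$, the positive cone is closed under the multiplication inherited from $\B S$, with unit $\varepsilon(1) = 1$.

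The one computation that needs care is closure under $\vee$ and $\wedge$. I would reduce two elements to a common real denominator: fixing $t \in \mathbb{R}^+$ with $t \ge r,s$, one rewrites $\varepsilon(a)-r = \varepsilon(a+(t-r)) - t = [a',t]$ and $\varepsilon(b)-s = [b',t]$ with $a',b' \in S$. The join formula together with \ref{S2} then collapses $[a',t]\vee[b',t]$ to $[(a'\vee b')+t,\, 2t] = \varepsilon(a'\vee b') - t \in \Q S$, and dually for $\wedge$ using \ref{S3}. Once $\Q S$ is closed under all the operations, the boundedness \ref{S8} and archimedean \ref{S9} conditions pass to it directly from $\B S$, as both are universally quantified and hence inherited by subsets. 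The main obstacle is thus purely computational, namely the common-denominator reduction for the lattice operations, rather than conceptual.
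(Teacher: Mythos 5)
Your proof is correct and follows essentially the same route as the paper: part (1) via the differencing axiom \cref{diff}, and part (2) by exhibiting $\Q S$ as an $\sbal$-subalgebra of $\B S$, with the same common-denominator computation (via \ref{S2} and \ref{S3}) for closure under $\vee$ and $\wedge$. The only divergence is minor: where the paper verifies \ref{S7} by extending $\rho^+$ to a map $\rho : \mathbb{R} \to \Q S$ and checking additivity and monotonicity case by case, you obtain it by noting $\mathbb{R} \subseteq \Q S$ and restricting the $\ell$-subalgebra inclusion $\mathbb{R} \hookrightarrow \B S$, a legitimate streamlining.
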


\begin{proof}
\ref{S = (S+R) plus} The inclusion $\varepsilon[S] \subseteq (\Q S)^+$ is clear. For the reverse inclusion, let $b \in (\Q S)^+$. Then $b = \varepsilon(a) - r$ for some $a \in S$ and $r \in \mathbb{R}^+$. Since $0 \le b$, we have $r \le a$.  
\cref{diff} shows that there is $c \in S$ with $a = c + r$. Therefore, since $\varepsilon(r)=r$ for each $r\in\mathbb{R}$ (see \cref{rem: alpha r = r}), we have $b = \varepsilon(a) - r = \varepsilon(c+r) - r = \varepsilon(c) \in \varepsilon[S]$. 

\ref{S + R in sbal} Because $\B S \in \bal$ by \cref{BS in bal}, it suffices to show that $\Q S$ is an $\sbal$-subalgebra of $\B S$. 
It is clear that $\Q S$ is closed under addition, and it is closed under positive multiplication by \ref{S = (S+R) plus}. For join, let $a, b \in S$ and $r, s \in \mathbb{R}^+$. Then, by \ref{S2}, 
\[
(\varepsilon(a) - r) \vee (\varepsilon(b)-s) = (\varepsilon(a+s)) \vee (\varepsilon(b + r)) - (r + s), 
\]
which belongs to $\Q S$ 
because $S$ is closed under addition and join. A similar calculation shows that $\Q S$ is closed under meet. Finally, let $\rho^+ : \mathbb{R}^+ \to S$ be the $\mathbb{R}^+$-action. We extend 
$\rho^+$ to a map $\rho : \mathbb{R} \to \Q S$ by setting $\rho(-r) = -\rho^+(r)$ for each $r \in \mathbb{R}^+$. 
We show that $\rho$ satisfies \ref{S7}. It is clear that $\rho$ preserves multiplication and 1. For addition, let $r, s \in \mathbb{R}$. There are four cases depending on whether $r$ or $s$ is positive. Since these cases are proved similarly, we only consider the case when $r \ge 0$, $s \le 0$, and $r + s \ge 0$. In this case, $r = (r+s) + (-s)$, so $\rho^+(r) = \rho^+(r+s) + \rho^+(-s)$. This yields that $\rho(r) = \rho(r+s) - \rho(s)$, and hence $\rho(r+s) = \rho(r) + \rho(s)$. 

It remains to show that $\rho$ preserves join and meet. For this, since $\mathbb R$ is totally ordered, it is enough to show that $\rho$ is order preserving. Let $r, s \in \mathbb{R}$ with $r \le s$. If $r,s$ are both positive, it is clear that 
$\rho(r) \le \rho(s)$. If both $r,s$ are negative, then it is easy to see that $\rho(r) \le \rho(s)$. Finally, if $r$ is negative and $s$ is positive, then $\rho(r) = -\rho^+(-r) \le 0$, so $\rho(r) \le 0 \le \rho(s)$.
\end{proof}

We now define a functor from $\sbal^+$ to $\sbal$.

\begin{proposition}
There is a functor $\Q : \sbal^+ \to \sbal$ which sends $S \in \sbal^+$ to $\Q S$ and an $\sbal^+$-morphism $\alpha : S \to T$ to $\B\alpha|_{\Q S}$.
\end{proposition}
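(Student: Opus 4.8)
The plan is to check that $\Q$ is well defined on objects and on morphisms, and then that it respects identities and composition. Well-definedness on objects is already in hand: \cref{S + R in sbal} gives $\Q S \in \sbal$ for each $S \in \sbal^+$. So the substance of the argument lies in the morphism part.

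First I would extend the $\bal$-envelope to a functor on $\sbal^+$, mirroring the $\sbal$ case. For an $\sbal^+$-morphism $\alpha : S \to T$, set $\B\alpha[a,b] = [\alpha(a),\alpha(b)]$. This is well defined because $a+d = b+c$ forces $\alpha(a)+\alpha(d) = \alpha(b)+\alpha(c)$, as $\alpha$ is additive, and the verification that $\B\alpha$ is a $\bal$-morphism repeats the routine check behind \cref{rem: B is a functor}, using that $\alpha$ preserves addition, multiplication, joins, meets, and the $\mathbb{R}^+$-action.

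Next comes the one step that needs genuine care: showing that $\B\alpha$ carries $\Q S$ into $\Q T$. On a generator $\varepsilon(a)-r$ of $\Q S$ (with $a \in S$ and $r \in \mathbb{R}$) one computes $\B\alpha(\varepsilon(a)) = \B\alpha[a,0] = [\alpha(a),0] = \varepsilon(\alpha(a))$, while $\B\alpha$ fixes $\mathbb{R}$ since it is a $\bal$-morphism; hence $\B\alpha(\varepsilon(a)-r) = \varepsilon(\alpha(a)) - r \in \Q T$, so the restriction $\B\alpha|_{\Q S} : \Q S \to \Q T$ is well defined. Because $\Q S$ and $\Q T$ are $\sbal$-subalgebras of $\B S$ and $\B T$ (\cref{S + R in sbal}) and $\B\alpha$ is a $\bal$-morphism, this restriction automatically preserves the $\ell$-monoid structure and the $\mathbb{R}^+$-action; and since it sends $(\Q S)^+ = \varepsilon[S]$ into $(\Q T)^+ = \varepsilon[T]$ (by \cref{S = (S+R) plus}), it preserves multiplication on positive cones. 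Thus $\B\alpha|_{\Q S}$ is an $\sbal$-morphism, so $\Q$ is well defined on morphisms.

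Finally, functoriality is purely formal and rides on $\B$ being a functor: $\Q(1_S) = \B(1_S)|_{\Q S} = 1_{\B S}|_{\Q S} = 1_{\Q S}$, and for composable $\sbal^+$-morphisms $\alpha,\beta$ we have $\Q(\beta\alpha) = \B(\beta\alpha)|_{\Q S} = (\B\beta\circ\B\alpha)|_{\Q S} = \B\beta|_{\Q T}\circ\B\alpha|_{\Q S} = \Q\beta\circ\Q\alpha$. I expect the only point requiring real attention to be the restriction computation $\B\alpha[\Q S]\subseteq\Q T$; everything else is bookkeeping resting on the functoriality of $\B$ and on $\Q S$ being an $\sbal$-subalgebra of $\B S$.
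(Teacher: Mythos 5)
Your proof is correct and takes essentially the same approach as the paper's: well-definedness on objects via \cref{S + R in sbal}, well-definedness on morphisms via the computation $\B\alpha(\varepsilon(a)-r)=\varepsilon(\alpha(a))-r\in\Q T$ on generators, and functoriality inherited formally from $\B$. The only difference is expository—you spell out the routine verifications (that $\B\alpha$ is well defined for an $\sbal^+$-morphism and that the restriction is an $\sbal$-morphism) which the paper compresses into citations of \cref{thm: bal is reflective} and \cref{S + R in sbal}.
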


\begin{proof}
If $S \in \sbal^+$, we observed in \cref{S + R in sbal} that $\Q S \in \sbal$. Let $\alpha : S \to T$ be an $\sbal^+$-morphism. Then $\B\alpha : \B S \to \B T$ is a $\bal$-morphism by \cref{thm: bal is reflective}. Let $a \in S$ and $r \in \mathbb{R}$. Then $\B\alpha(s-r) = \alpha(s) - r \in \Q T$. Therefore, $\B\alpha|_{\Q S} : \Q S \to \Q T$ is a well-defined $\sbal$-morphism. It is clear that $\Q$ preserves composition and identity morphisms. Thus, $\Q$ is a functor.
\end{proof}

\begin{definition}
Let $\usbal^+$ be the full subcategory of $\sbal^+$ consisting of those $S \in \sbal^+$ whose image $\varepsilon[S]$ is complete with respect to the uniform norm on $\B S$. 
\end{definition}

\begin{remark}
The category $\usbal^+$ is the category defined in  \cite[Def.~4.6]{DH18} (under the name $\usbal$). 
\end{remark}

\begin{theorem} \label{thm: sbal = sbal plus}
The functors $\P$ and $\Q$ yield an equivalence 
between $\sbal$ and $\sbal^+$, and restrict to an equivalence between $\usbal$ and $\usbal^+$. 
\end{theorem}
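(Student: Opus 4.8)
The plan is to show that $\P$ and $\Q$ are mutually quasi-inverse by exhibiting natural isomorphisms $\P\Q \cong 1_{\sbal^+}$ and $\Q\P \cong 1_{\sbal}$, and then check that these isomorphisms restrict appropriately to the uniformly complete subcategories. First I would establish $\P\Q \cong 1_{\sbal^+}$: for $S \in \sbal^+$, we have $\P\Q S = (\Q S)^+$, which equals $\varepsilon[S]$ by \cref{S = (S+R) plus}. Since $\varepsilon : S \to \B S$ is injective (it is one-one just as in \cref{varepsilon}), the corestriction $S \to \varepsilon[S] = (\Q S)^+$ is an $\sbal^+$-isomorphism. I would package these into a natural transformation and verify naturality against an arbitrary $\sbal^+$-morphism $\alpha$, which is routine because $\Q\alpha = \B\alpha|_{\Q S}$ agrees with $\alpha$ on $\varepsilon[S]$.

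Next I would establish $\Q\P \cong 1_{\sbal}$. For $S \in \sbal$, by definition $\Q\P S = \Q(S^+) = \{ \varepsilon(a) - r : a \in S^+, r \in \mathbb{R}\} \subseteq \B(S^+)$. The key point is that $S$ itself is naturally an $\sbal$-algebra sitting inside $\B(S^+)$: indeed every element of $S$ can be written as $a - r$ with $a \in S^+$ and $r \in \mathbb{R}^+$ by boundedness \ref{S8}, so the natural map $S \to \B(S^+)$ (sending $s = a - r$ to $\varepsilon(a) - r$) is a well-defined $\sbal$-embedding whose image is exactly $\Q(S^+)$. I would verify this map is an $\sbal$-isomorphism $S \to \Q\P S$ and check naturality. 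Alternatively, and perhaps more cleanly, one may invoke \cref{cor: UMP for envelope}: since $S^+$ generates $S$ as a $\bal$-subalgebra, $\B(S^+)$ is isomorphic to the $\bal$-subalgebra of $S$-differences, and $\Q(S^+)$ corresponds precisely to $S$ under this identification.

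For the restriction to $\usbal$ and $\usbal^+$, I would argue that the norm on $S$ (defined via the embedding into its $\bal$-envelope) is preserved under both isomorphisms, so uniform completeness is transported correctly. For the $\P$ direction, if $S \in \usbal$ then $S^+$ is closed in $S$ (limits of Cauchy sequences of positive elements are positive, since $\le$ is compatible with the norm), hence $\varepsilon[S^+] = (\Q S^+)^+$ is complete in $\B(S^+)$, giving $S^+ \in \usbal^+$. Conversely, if $S \in \usbal^+$, then $\varepsilon[S]$ is complete in $\B S$ by definition, and I would show $\Q S$ is then complete: an element of $\Q S$ is $\varepsilon(a) - r$, and completeness of $\varepsilon[S]$ together with completeness of $\mathbb{R}$ yields completeness of the translates, hence of $\Q S$. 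The main obstacle I anticipate is the bookkeeping in this last step, namely verifying that uniform completeness of the positive cone $\varepsilon[S]$ inside $\B S$ propagates to uniform completeness of all of $\Q S = \varepsilon[S] + \mathbb{R}$; one must confirm that a Cauchy sequence $\varepsilon(a_n) - r_n$ in $\Q S$ forces both $\{\varepsilon(a_n)\}$ and $\{r_n\}$ to be Cauchy (using boundedness and that $\mathbb{R}$ is a closed subset), so that the limit again lies in $\Q S$. Once this completeness transfer is pinned down, the equivalence of the uniformly complete subcategories follows immediately from the equivalence of the full categories.
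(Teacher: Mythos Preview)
Your outline is essentially the same as the paper's: both directions of the natural isomorphism are built from $\varepsilon$, exactly as you describe, and naturality is routine. The one place where your proposal diverges from the paper is in the completeness transfer for the direction $S \in \usbal^+ \Rightarrow \Q S \in \usbal$. You propose to show that a Cauchy sequence $\{\varepsilon(a_n) - r_n\}$ forces $\{\varepsilon(a_n)\}$ and $\{r_n\}$ to be separately Cauchy; but the representation $\varepsilon(a) - r$ is not unique (for any $t \in \mathbb{R}^+$ one has $\varepsilon(a) - r = \varepsilon(a') - (r+t)$ for some $a' \in S$ by \cref{diff}), so there is no reason the components in a given representation are themselves Cauchy. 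The paper's argument sidesteps this: since a Cauchy sequence is bounded, there is a single $s \in \mathbb{R}^+$ with $\varepsilon(a_n) - r_n + s \ge 0$ for all $n$; by \cref{S = (S+R) plus} each $\varepsilon(a_n) - r_n + s$ equals $\varepsilon(b_n)$ for some $b_n \in S$, and now $\{\varepsilon(b_n)\}$ is Cauchy in $\varepsilon[S]$, hence convergent there, so $\{\varepsilon(a_n) - r_n\}$ converges in $\Q S$. Your hint ``using boundedness'' points in the right direction, but the correct use of boundedness is a uniform shift into the positive cone, not a decomposition into separately Cauchy pieces.
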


\begin{proof}
Let $S \in \sbal$. We show that $\varepsilon[S] = \Q (S^+)$. First let $a \in S$. Since $S$ is bounded, there is $r \in \mathbb{R}^+$ with $a + r \ge 0$. Therefore, $\varepsilon(a) = \varepsilon(a+r) - r \in \Q(S^+)$, yielding that $\varepsilon[S] \subseteq \Q (S^+)$. For the reverse inclusion, let $a \in S^+$ and $r \in \mathbb{R}$. Then $\varepsilon(a) - r = \varepsilon(a-r) \in \varepsilon[S]$. Thus, $\varepsilon[S] = \Q\P(S)$. Consequently, there is an $\sbal$-isomorphism $\lambda_S =\varepsilon : S \to \Q\P(S)$.

Next, let $S \in \sbal^+$. Then $\varepsilon[S] = (\Q S)^+$ by \cref{S = (S+R) plus}. Therefore, there is an $\sbal^+$-isomorphism $\mu_S = \varepsilon : S \to \P\Q(S)$. 
Naturality of $\lambda$ and $\mu$ is straightforward. Thus, $\sbal$ and $\sbal^+$ are equivalent. 

It is left to show that the functors $\P, \Q$ restrict to an equivalence of $\usbal$ and $\usbal^+$. It suffices to show that for $S \in \sbal^+$, $\Q S \in \usbal$ iff $S \in \usbal^+$.  
Suppose that $\Q S \in \usbal$. 
Let $\{a_n\}$ be a Cauchy sequence in $\varepsilon[S]$. Then it is a Cauchy sequence in $\Q S$, so it converges in $\Q S$. Because each element of $\varepsilon[S]$ is nonnegative, the limit is nonnegative, so it lies in $(\Q S)^+$, and hence in $\varepsilon[S]$ by \cref{S = (S+R) plus}. Therefore, $\{a_n\}$ converges in $\varepsilon[S]$, and hence $S \in \usbal^+$.

Conversely, let $\{a_n - r_n\}$ be a Cauchy sequence in $\Q S$. It is then a bounded sequence, so there is $s \in \mathbb{R}^+$ with $a_n - r_n + s \ge 0$ for each $n$. By \cref{S = (S+R) plus}, there are $b_n \in S$ with $b_n = a_n - r_n + s$. The sequence $\{b_n\}$ is Cauchy in $S$, so it converges to some $b \in S$. Therefore, $\{a_n - r_n + s\}$ converges to $b$, and so $\{a_n - r_n\}$ converges to $b - s \in \Q S$. Thus, $\Q S \in \usbal$.
\end{proof}

As an immediate consequence of \cref{thm: main duality,thm: sbal = sbal plus}, we obtain:

\begin{corollary} \cite[Thm.~4.9]{DH18}
The categories $\usbal^+$ and $\Nach$ are dually equivalent.
\end{corollary}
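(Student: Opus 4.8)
The plan is to derive the stated corollary by composing the two equivalences already established. The final statement asserts that $\usbal^+$ and $\Nach$ are dually equivalent, and the two cited results provide exactly the pieces needed: \cref{thm: main duality} gives a dual equivalence between $\usbal$ and $\Nach$ (via the functors $\C_\le$ and $\Y$), while \cref{thm: sbal = sbal plus} gives an (ordinary, covariant) equivalence between $\usbal$ and $\usbal^+$ (via the functors $\P$ and $\Q$). Composing these should yield the desired dual equivalence.

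First I would recall that by \cref{thm: main duality}, the functors $\C_\le : \Nach \to \usbal$ and $\Y : \usbal \to \Nach$ form a dual equivalence. Next I would invoke \cref{thm: sbal = sbal plus}, which shows that $\P : \usbal \to \usbal^+$ and $\Q : \usbal^+ \to \usbal$ restrict to an equivalence of categories. Since composing a dual equivalence with a (covariant) equivalence yields a dual equivalence, the composite functors $\P \circ \C_\le : \Nach \to \usbal^+$ and $\Y \circ \Q : \usbal^+ \to \Nach$ establish the dual equivalence between $\usbal^+$ and $\Nach$.

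There is essentially no obstacle here, since the corollary is a formal consequence of the two theorems: the only thing to check is the elementary categorical fact that the composition of a dual equivalence with an equivalence is again a dual equivalence, which follows immediately because natural isomorphisms compose and the (contravariant) adjunction units of \cref{thm: main duality} transport along the natural isomorphisms $\lambda$ and $\mu$ of \cref{thm: sbal = sbal plus}. Thus the proof amounts to a single sentence citing both results, exactly as the excerpt indicates by labeling the statement ``As an immediate consequence of \cref{thm: main duality,thm: sbal = sbal plus}.'' The only care needed is bookkeeping about variance, ensuring one does not accidentally claim an ordinary equivalence rather than a dual one.
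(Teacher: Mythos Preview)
Your proposal is correct and matches the paper's approach exactly: the corollary is stated as an immediate consequence of \cref{thm: main duality,thm: sbal = sbal plus}, and the paper's subsequent remark identifies the same composite functors $\P \circ \C_\le$ and $\Y \circ \Q$ that you do.
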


\begin{remark}
The functors establishing the dual equivalence of the previous corollary are the compositions 
$\Y\circ\Q : \usbal^+ \to \Nach$ and $\P \circ \C_\le : \Nach \to \usbal^+$.
\end{remark}

\newcommand{\etalchar}[1]{$^{#1}$}
\def\cprime{$'$}

\end{document}